\newcommand{\vc}[1]{\boldsymbol{#1}}
\newcommand{\N}{\mathbb N}
\newcommand{\registered}
   {{\scriptsize \ooalign{\hfil\raise0.07ex\hbox{\scriptsize \sc r}\hfil%
              \crcr\mathhexbox20D}}}
\newtheorem{theorem}{Theorem}
\newtheorem{lemma}{Lemma}
\newtheorem{definition}{Definition}
\newtheorem{corollary}{Corollary}
\newtheorem{remark}{Remark}
\newtheorem{proposition}{Proposition}
\newcommand{\nc}{\newcommand}
\nc{\ds}{\displaystyle}
\nc{\mbZ}{\mathbb Z}
\nc{\mbQ}{\mathbb Q}
\nc{\mbR}{\mathbb R}
\nc{\mbC}{\mathbb C}
\nc{\mbN}{\mathbb N}
\nc{\mbE}{\mathbb E}
\nc{\mbP}{\mathbb P}
\nc{\pr}{\mathbb P}
\nc{\PH}{\emph{PH} }
\nc{\ME}{\emph{ME} }
\nc{\LST}{\emph{LST} }
\nc{\rank}{\mbox{rank\hspace{1pt}}}
\nc{\soph}[1]{\textcolor{violet}{#1}}
\numberwithin{equation}{section}
\numberwithin{figure}{section}
\begin{document}
\title[Extinction probabilities of Branching Processes]{Extinction probabilities in branching  processes with  countably many types: a general framework}

\author[D.~Bertacchi]{Daniela Bertacchi$^1$}
\address{$^1$ Dipartimento di Matematica e Applicazioni,
Universit\`a di Milano--Bicocca,
via Cozzi 53, 20125 Milano, Italy.}
\email{daniela.bertacchi\@@unimib.it}

\author[P.~Braunsteins]{Peter Braunsteins$^{2,3}$}
\address{$^2$ 
Korteweg-de Vries Instituut,
University of Amsterdam,
Amsterdam, The Netherlands.}
\email{p.t.m.braunsteins\@@uva.nl}
\address{$^3$ School of Mathematics and Statistics,
University of Melbourne, Melbourne, Australia.}
\email{braunsteins.p\@@unimelb.edu.au}

\author[S.~Hautphenne]{Sophie Hautphenne$^3$}
\email{sophiemh\@@unimelb.edu.au}

\author[F.~Zucca]{Fabio Zucca$^4$}
\address{$^4$ Dipartimento di Matematica,
Politecnico di Milano,
Piazza Leonardo da Vinci 32, 20133 Milano, Italy.}
\email{fabio.zucca\@@polimi.it}

\begin{abstract}
We consider Galton--Watson branching processes with countable typeset~$\mathcal{X}$. We study the vectors $\vc q(A)=(q_x(A))_{x\in\mathcal{X}}$ recording the conditional probabilities of extinction in subsets of types $A\subseteq \mathcal{X}$, given that the type of the initial individual is $x$. We first investigate the location of the vectors $\vc q(A)$ in
the set of fixed points of the progeny generating vector and prove that $q_x(\{x\})$  is larger than or equal to the $x$th entry of any fixed point, whenever it is different from 1. Next, we present equivalent conditions for $q_x(A)< q_x (B)$ for any initial type $x$ and $A,B\subseteq \mathcal{X}$. Finally, we develop a general framework to characterise all \emph{distinct} extinction probability vectors, and thereby to determine whether there are finitely many, countably many, or uncountably many distinct vectors. We illustrate our results with examples, and conclude with open questions.
\smallskip

\noindent \textbf{Keywords}: infinite-type branching process;  extinction probability; generating function; fixed point.

\noindent \textbf{AMS subject classification}: 60J80, 60J10.
\end{abstract}

\date{\today}
\maketitle

\section{Introduction}
\label{sec:intro}

Branching processes are models for populations where independent individuals reproduce and die. 
If all individuals have the same reproduction law and live in a single location, then the population can be modelled with a \emph{single-type} branching process.
If individuals have specific characteristics (i.e. their location, or in general their ``type") which impact the evolution of the population, then \emph{multitype} branching processes are suitable models.
Here we focus on (discrete-time) \textit{multitype Galton--Watson branching processes} (MGWBPs) with countably many types (where  \textit{countable} includes the finite case as well). These processes arise naturally as stochastic models for various
biological populations (see for instance \cite[Chapter 7]{ka2002}). They can alternatively be interpreted as \emph{branching random walks} (BRWs) on an infinite graph where the types correspond to the vertices of the graph (see for instance \cite{shi} and references therein).

One of the primary quantities of interest in a branching process is the probability that
the population eventually becomes empty or \emph{extinct.} 
Extinction in MGWBPs can be of the whole population (\textit{global extinction}), in all finite subsets of types (\textit{partial extinction}), or more generally, in any fixed subset of types $A$ (\textit{local extinction in $A$}).
To be precise, let $\mathcal{X}$ denote the (countable) typeset, and let $\bm{Z}_n=(Z_{n,x})_{x \in \mathcal{X}}$, where $Z_{n,x}$ records the number of type-$x$ individuals alive in generation $n\geq 0$.
For $A \subseteq \mathcal{X}$, let $\mathcal{E}(A)$ be the event that the process $\{\bm{Z}_n\}_{n\geq 0}$ becomes extinct in $A$, that is, the event that $\lim_{n \to \infty} \sum_{x \in {A}} Z_{n,x} =0$.
Let  $\bm{q}(A)=(q_x(A))_{x \in \mathcal{X}}$ be a vector whose $x$th entry records the conditional probability of local extinction in $A$, given that the population starts with a single type-$x$ individual, that is,
\[
q_x(A):=\mbP(\mathcal{E}(A)\, | \, \bm{Z}_0=\vc e_x),
\]
where $\vc e_x$ is the vector with entry $x$ equal to 1 and all other entries equal to 0.
In particular, note that $\bm{q}(\emptyset)=\bm{1}$.
We let $\bm{q}:= \bm{q}(\mathcal{X})$ be the vector containing the conditional probabilities of global extinction, and we let $\bm{\tilde q} = (\tilde q_x)_{x \in \mathcal{X}}$ be the vector containing the conditional probabilities of partial extinction, where
\[
\tilde q_x := \mbP\bigg( \bigcap_{A: |A| < \infty} \mathcal{E}(A) \, \Big| \, \bm{Z}_0=\vc e_x \bigg).
\]
Several authors have studied properties of $\bm{q}$ and $\bm{\tilde q}$; see for instance \cite{cf:BZ,Bra16,Gan05,Spa89} and most other references herein.

If the process is \emph{irreducible}, meaning that an individual of any given type may have a descendant of any other type, then \begin{itemize}\item when $\mathcal{X}$ is \emph{finite}, $\bm{q}=\bm{q}(A)=\bm{\tilde q}$ for all non-empty $A \subseteq \mathcal{X}$, and 
\item when $\mathcal{X}$ is \emph{countably infinite} and $A$ is \emph{finite}, $\bm{\tilde q}=\bm{q}(A)$ (see for instance \cite[Corollary~1]{Bra18}).
\end{itemize}
More generally, for any non-empty $A \subseteq \mathcal{X}$, it is known that 
$$\bm{q} \leq \bm{q}(A) \leq \bm{\tilde q} \leq \bm{1};$$ in addition, these inequalities may be strict (see for instance \cite{cf:BZ2020} and \cite{Bra18}). Thus the vectors $\bm{q}(A)$ are of independent interest. 
Other than the recent work of \cite{Hut20} (which focuses on different questions than those considered here) and references in the remainder of this section, little attention has been paid to properties of the vectors $\bm{q}(A)$.

The vectors $\{\bm{q}(A)\}_{A \subseteq \mathcal{X}}$  are all solutions of a common fixed point equation. More precisely, if $\vc G(\vc s) := (G_x(\vc s))_{x\in\mathcal{X}}$ records the probability generating
function associated with the reproduction law of each type (defined in \eqref{defG}), then  $\bm{q}(A)$ belongs to the set 
\begin{equation}\label{set_fp}S:=\{\vc s\in[0,1]^{\mathcal{X}}:\vc s=\vc G(\vc s)\}.\end{equation}In other words, $\textrm{Ext}\subseteq S$, where 
\begin{equation}\label{ext_set}\text{Ext}:= \{ \bm{q}(A) : A \subseteq \mathcal{X}  \}\end{equation}
is the set of extinction probability vectors. In this paper, we focus on the following three main questions: 
\begin{itemize}
\item[\emph{{(i)}}] Where are the elements of $\text{Ext}$ located in $S$? (Section~\ref{sec:secondlarg})
\item[\emph{{(ii)}}] When does $\bm{q}(A)$ differ from $\bm{q}(B)$ for two sets $A,B\subseteq \mathcal{X}$? (Section~\ref{sec:comparison})
\item[\emph{{(iii)}}] How many distinct elements  does  $\text{Ext}$ contain and can these elements be identified?  (Section~\ref{sec:extprobvec})
\end{itemize}
While the answers to these questions are well established in the finite-type setting, much less is known when there are infinitely many types. 
Below we discuss the background behind each question and the contributions we make in this paper.


\paragraph{Question \emph{(i)}} It is well known that in the finite-type irreducible setting, the set of fixed points $S$ contains at most two elements: $S=\text{Ext}=\{\vc q,\vc1\}$; see for instance \cite[Chapter~2]{harris}. 
When there are countably many types, $\vc q$ is the minimal element of $S$ \cite[Theorem 3.1]{Moy62}. 
More recently, in \cite{Bra19} the authors proved that, for a
class of branching processes with countably
infinitely many types called \textit{lower Hessenberg branching processes} (LHBPs), $\bm{\tilde q}$ is either equal to $\vc 1$ or to the maximal element of $S\setminus\{\vc1\}$. 
Theorem \ref{LargeLoc} of the present paper implies that the same result holds for \emph{general} irreducible MGWBPs. 
In particular, if there is \textit{strong local survival,} that is, if $\vc q=\vc{\tilde{q}}<\vc 1$, then Theorem~\ref{LargeLoc} implies that $S=\text{Ext}=\{\vc q,\vc1\}$, as in the finite-type setting.
In addition, this theorem also applies in the reducible setting as we show in general that, for any fixed point $\vc s\in S$, if $s_x<1$ then $s_x\leq q_x(\{x\})$.

\paragraph{Question \emph{(ii)}} 

Recent work addresses related questions:
in \cite{cf:BZ14-SLS} and \cite{cf:BZ2020}, the authors provide equivalent conditions for $\vc q(A)=\vc q$ for every non-empty $A\subseteq  \mathcal{X}$;
in \cite{Bra18}, the authors give sufficient conditions for $\vc q(A)\leq \vc q(B)$  that apply to any MGWBP  and  $A,B\subseteq  \mathcal{X}$, as well as sufficient conditions for $\vc q<\vc q(A)<\bm{\tilde q}$ that apply to \emph{block LHBPs}. 
In Theorem  \ref{th:1} we present a number of necessary and sufficient conditions for $q_x(A)< q_x (B)$ for any initial type $x$; this is a significant improvement on \cite[Theorem 3.3]{cf:BZ14-SLS} and \cite[Theorem 2.4]{cf:BZ2020} (see Section~\ref{sec:comparison} for details). One condition in Theorem \ref{th:1} is the existence of an initial type from which, with positive probability,
the process survives in $A$ without ever visiting $B$; another  is the existence of a sequence of types $\{x_n\}_{n\in\N}$ such that \begin{equation}\label{cond11}(1-q_{x_n}(B))/(1-q_{x_n}(A))\rightarrow0\quad\textrm{as $n\to\infty$.}\end{equation}
A consequence of \eqref{cond11} is that, for any extinction probability vector $\bm q(A)\neq\bm q$, we have $\sup_{x\in\mathcal X}q_x(A)=1$ (Corollary \ref{cor:1}). In particular, if all the entries of $\vc{\tilde{q}}$ are uniformly bounded away from 1, then  there is strong local survival ($\vc q=\vc{\tilde{q}}<\vc 1$; Corollary \ref{cor:suff-quasi}).



\paragraph{Question \emph{(iii)}} 
When $\vc q<\vc{\tilde{q}}$, the set of extinction probability vectors $\text{Ext}$ may contain more than two distinct elements. For instance, in processes that exhibit \textit{non-strong local survival} ($\vc q<\vc{\tilde{q}}<\vc 1=\vc q(\emptyset)$), $\text{Ext}$ contains at least three distinct elements; see for instance \cite{cf:BZ14-SLS,cf:GMPV09,cf:MenshikovVolkov,cf:Muller08-2} for examples of such processes.
In recent years, various examples with more than three extinction probabilities have been constructed: for instance,
\cite{Bra18} contains examples with four and five distinct extinction probability vectors.
The set $\text{Ext}$ can even contain uncountably many distinct elements, as shown in
 \cite[Section~3.1]{cf:BZ2020}.
In the same paper, the authors leave open the question of whether $\text{Ext}$ can be \emph{countably} infinite. Up to this point, the literature has focused primarily on specific examples. Here our goal is to 
develop a unified theory to characterise ---and thereby count--- the distinct elements of $\text{Ext}$.

We start by restricting our attention to a more manageable subset of Ext,
\begin{equation}\label{extA}\text{Ext}(\mathcal{A}):= \{ \bm{q}(A) : A \in \Sigma(\mathcal{A}) \},\end{equation}where $\mathcal{A}=\{A_i\}_{i \in K_\mathcal{A}}$ is a (finite or infinite) collection of subsets of $\mathcal{X}$ and $\Sigma(\mathcal{A})$ is the smallest $\sigma$-algebra on $\bigcup_{A \in \mathcal{A}} A$ containing $\mathcal{A}$.
The idea is to select $\mathcal{A}$ carefully so that \emph{(1)} either $\text{Ext} \equiv \text{Ext}(\mathcal{A})$
or $\text{Ext}(\mathcal{A})$ highlights some property of Ext, and \emph{(2)} $\mathcal{A}$ satisfies some minor assumptions, in which case  we call  $\mathcal{A}$ \textit{regular}. We show that we can associate a directed graph $G_\mathcal{A}$ to $\mathcal{A}$, and that if $\mathcal{A}$  is regular, then the analysis of $\text{Ext}(\mathcal{A})$ reduces to the analysis of $G_\mathcal{A}$. More specifically, in this graph, the vertices are the elements of $K_\mathcal{A}$, and there is a directed edge from  $i$ to  $j$ if and only if $\bm{q}(A_i)\geq \vc q(A_j)$, where these pairwise relationships can be determined using  Theorem  \ref{th:1}. We show that there is an injective function from the set of edgeless subgraphs of $G_\mathcal{A}$ to the distinct elements of $\text{Ext}(\mathcal{A})$ (Theorem \ref{th:equivalence} and Lemma~\ref{lem:primitive}); furthermore, if $G_\mathcal{A}$ does not contain a path of infinite length (i.e. an \emph{ascending chain} as defined on Page \pageref{CA}),
we prove that this function is bijective (Theorem \ref{th:equivalence} and Proposition~\ref{pro:cardinality}\emph{(ii)}). 
If $G_\mathcal{A}$ contains ascending chains, then we show that the
 set of edgeless subgraphs can be extended so as to define a bijection between this extended set and the distinct elements of $\text{Ext}(\mathcal{A})$ (Theorem \ref{th:equivalence} and Proposition~\ref{pro:cardinality}\emph{(i)}).
These results translate problems about the distinct extinction probability vectors into much simpler problems about the graph $G_\mathcal{A}$. We use this framework  to provide necessary and sufficient conditions for $\text{Ext}(\mathcal{A})$ to contain finitely many, countably many, or uncountably many distinct elements (Theorem~\ref{th:finiteinfinite2}). To provide a rigorous exposition, we introduce an equivalence relation $\sim$ on the set $2^{K_\mathcal{A}}$ (see Definition~\ref{def:equivalence}) and  then study properties of the quotient set $2^{K_\mathcal{A}}/_\sim$. 


We apply our results to three examples. In Example 1, we consider a specific family of irreducible branching processes where, by varying a single parameter, 
we can transition smoothly between cases where the process has \emph{any finite} number of extinction probability vectors, a \emph{countably infinite} number of extinction probability vectors, and an \emph{uncountable}  number  of extinction probability vectors
(Proposition \ref{pro:ultimate}). This resolves the open question in  \cite{cf:BZ2020}. In Examples 2 and 3, we use our general framework to list all distinct extinction probability vectors in two non-trivial examples: in Example 2, the number of distinct elements of $\text{Ext}(\mathcal{A})$ is the same as the  number of edgeless subgraphs in $G_\mathcal{A}$, while in Example 3, the number of distinct elements of $\text{Ext}(\mathcal{A})$ is strictly larger than the  number of edgeless subgraphs in $G_\mathcal{A}$.



%


The paper is structured as follows. In Section~\ref{sec:notation} we introduce some definitions and notation, as well as some preliminary results.
In Sections~\ref{sec:secondlarg} and \ref{sec:comparison} we tackle Questions~\emph{(i)} and \emph{(ii)}, respectively.
In Section~\ref{sec:extprobvec} we deal with Question \emph{(iii)}; more precisely, in Section~\ref{subsec:indexedsets} we introduce the concept of a regular family $\mathcal{A}$, in Section~\ref{subsec:quotient}
we define the equivalence relation $\sim$ on  $2^{K_\mathcal{A}}$ and establish 
the relationship between $2^{K_\mathcal{A}}/_\sim$ and the distinct elements in $\text{Ext}(\mathcal{A})$,
in Sections~\ref{subsec:primitive} we investigate the structure of the equivalence classes, 
and in Section \ref{subsec:cardinality} we provide conditions for the number of distinct elements in $\text{Ext}(\mathcal{A})$ to be finite, countably infinite, or uncountable. 
In Section~\ref{sec:Ex} we present our examples, and in Section~\ref{sec:open} we discuss open questions. All the proofs, along with some technical lemmas, can be found in Section~\ref{sec:proofs}. In a final appendix, we propose an iterative method to compute the extinction probability vector $\vc q(A)$ for any  $A\subseteq \mathcal{X}$.

In this paper, we let $\vc 1$ denote the infinite column vectors of $1$s.  For any vectors $\vc x$ and $\vc y$, we write $\vc x\leq \vc y$ if $x_i\leq y_i$ for all $i$, and $\vc x< \vc y$ if $\vc x\leq  \vc y$ with $x_i<y_i$ for at least one entry $i$. Finally, we use the shorthand notation $\mbP_x(\cdot):= \mbP(\cdot| \bm{Z}_0=\vc e_x)$ and $\mbE_x(\cdot):= \mbE(\cdot| \bm{Z}_0=\vc e_x)$. We remark that, unless otherwise explicitly stated, our results hold for any generic, not necessarily irreducible, MGWBP.

\section{Preliminaries}\label{sec:notation}



\subsection{Definitions}

In an MGWBP $\{\vc{Z}_n\}_{n\geq 0}$ with countable typeset $\mathcal{X}$, each individual lives for one generation and, at death, independently gives birth to a (finite) random number of offspring. For $x\in\mathcal{X}$ and $\vc{j} = (j_y)_{y\in\mathcal{X}} \in \mathbb{N}^\mathcal{X}$,
let $p_{x\vc{j}}$ denote the
probability that an individual of type~$x$ gives birth to~$j_y$
children of type $y$, for all $y\in\mathcal{X}$. The associated probability
 generating function is 
\begin{equation}\label{defG}
G_x(\vc{s}) := \sum_{\vc j:|\vc j |<\infty}p_{x\vc{j}} \vc{s}^{\vc{j}} := \sum_{\vc j:|\vc j |<\infty}p_{x\vc{j}} \prod_{y\in\mathcal{X}} s_y^{j_y},\qquad\vc{s} \in [0,1]^{\mathcal{X}},
\end{equation}  
where $|\vc j|:=\sum_{y \in \mathcal{X}} j_y$,
and  we  let $\vc G(\vc{s}) := (G_x(\vc s))_{x\in\mathcal{X}}$. Note that $\bm G\colon [0,1]^{\mathcal X}\to  [0,1]^{\mathcal X}$ is nondecreasing and continuous with respect
to the \textit{pointwise convergence} (or \textit{product}) \textit{topology} on $[0,1]^\mathcal{X}$. 
Let $m_{xy}:=\mbE_x[{Z}_{1,y}]=(\partial G_x(\vc{s})/\partial s_y ) |_{\vc{s} = \vc{1}} $ be the expected number of offspring of type $y$ born to a parent of type~$x$, and
let $(\mathcal{X},E_\mathcal{X})$ be the directed graph with vertex set $\mathcal{X}$ and edge set $E_\mathcal{X}=\{(x,y)\in\mathcal{X}^2:m_{xy}>0\}$.
We write $x \to y$ if there is a path from $x$ to $y$ in $(\mathcal{X},E_\mathcal{X})$, and we write $x \leftrightarrow y$ if $x \to y$ and $y \to x$. Note that $x \leftrightarrow x$ because there is always a path of length zero from $x$ to itself.
The equivalence 
class $[x]_{\leftrightarrow}$ of $x$ with respect to $\leftrightarrow$ is called the \textit{irreducible class of $x$}.

The MGWBP $\{\vc{Z}_n\}$ is called  \textit{irreducible} if and only if
the graph $(\mathcal{X},E_\mathcal{X})$ is \textit{connected} (that is, there is only one irreducible class), 
otherwise it is \textit{reducible}.
We say that the process is \textit{non-singular} if, in every irreducible class, there is at least one type whose probability of having exactly one child in that irreducible class is not equal to 1, or, in other words, if
for every $x$, there exists $y \leftrightarrow x$ such that $\pr_y(\sum_{w \leftrightarrow y} {Z}_{1,w}=1)<1$. This assumption is different from the usual one (which is, for every $x$ there exists $y \leftrightarrow x$ such that $\pr_y(\sum_{w \in \mathcal{X}} {Z}_{1,w}=1)<1$), but both definitions are equivalent for irreducible processes.

\subsection{Properties of $\bm{q}(A)$}

For $n\geq 0$ and $A \subseteq \mathcal{X}$, we define $\bm{q}^{(n)}(A):=(q^{(n)}_x(A))_{x \in \mathcal{X}}$
where $$q^{(n)}_x(A)=\mbP_x\left(\sum_{\ell\geq n} \sum_{y \in {A}} Z_{\ell,y} =0\right)$$ is
the probability of extinction in $A$ before generation $n$, starting with a single type-$x$ individual. The sequence
$\{\bm q^{(n)}(A)\}_{n \geq0}$ is (pointwise) nondecreasing,  and satisfies
\begin{equation}\label{eq:extprobab}
 \begin{cases}
 \bm{q}^{(n)}(A)=\vc G(\bm{q}^{(n-1)}(A)),& \quad \forall n \ge 1,\\
 {q}^{(0)}_x(A)=0, &\quad \forall x \in A, \\
 {q}^{(0)}_x(A)=G_x(\bm{q}^{(0)}(A)) &\quad  \forall x \not \in A.\\
\end{cases}
\end{equation}
In addition, $\bm{q}^{(n)}(A)$ converges to $\bm q(A)$ as $n\to\infty$. 
This implies that, for every $A \subseteq \mathcal{X}$, $\bm{q}(A)$ belongs to the set of fixed points $S$ defined in \eqref{set_fp}
(note that $\bm{\tilde q}$ also belongs to $S$).
We observe that  ${q}^{(0)}_x(A)=\mathbb P_x(\mathcal N(A))$, where $\mathcal{N}(A)$ is the event that the process never visits $A$.
In principle, if we knew $\bm{q}^{(0)}(A)$, we could iteratively apply $\bm{G}(\cdot)$ and recover $\bm{q}(A)$ as the limit of the sequence $\bm{q}^{(n)}(A)$.
However, $\bm{q}^{(0)}(A)$ is not uniquely characterised by Equation~\eqref{eq:extprobab}. In other words, $\bm{q}^{(0)}(A)$ is not necessarily the only element of the set of fixed points
 \[
\widehat S^{(A)} := \{ \bm{s} \in [0,1]^\mathcal{X} \colon \bm{s} = \bm{\widehat G}^{(A)}(\bm{s}) \},
\]where the function $\bm{\widehat{G}}^{(A)}\colon [0,1]^{\mathcal X}\to  [0,1]^{\mathcal X}$ is defined by
\[
\widehat {G}_x^{(A)}(\bm{s}) := \begin{cases} 
0, \quad &\text{ if } x \in A \\
G_x(\bm{s}) &\text{ if } x \notin A,
\end{cases}
\]and can be interpreted as the generating function of the offspring distribution in the modified process $\{\bm{\hat{Z}}_n^{(A)}\}$ where types in $A$ produce an infinite offspring number with probability one. Note that, if $A \neq \emptyset$, then $\vc 1\notin\widehat S^{(A)}$. 
For $B \subseteq \mathcal{X}$, we define the probability that the process becomes extinct in $B$ and never visits $A$ as $\bm{q}(B,A):=(q_x(B,A))_{x\in\mathcal X}$, where $q_x(B,A):=\pr_x(\mathcal E(B)\cap \mathcal N(A))$. The vectors $\bm{q}(B,A)$ belong to
$\widehat {S}^{(A)}$ for all $B$ (by the same arguments as those used to show $\bm{q}(A)\in S$). The following result characterizes $\bm{q}^{(0)}(A)$.

\begin{proposition}\label{pro:maximal}
The vectors $\bm{q}(\mathcal{X},A)$ and $\bm{q}^{(0)}(A)\equiv\bm{q}(\emptyset,A)$ are the (componentwise) minimal and {maximal} element of $\widehat S^{(A)}$ respectively. 
\end{proposition}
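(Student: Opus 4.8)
The plan is to show that $\bm{q}(\mathcal{X},A)$ and $\bm{q}^{(0)}(A)$ are fixed points of $\bm{\widehat G}^{(A)}$, and then to prove the minimality and maximality separately using a monotonicity/iteration argument. Both vectors have already been observed in the excerpt to lie in $\widehat S^{(A)}$: for $\bm{q}^{(0)}(A)\equiv\bm{q}(\emptyset,A)$ this is the case $B=\emptyset$ of the stated fact that $\bm{q}(B,A)\in\widehat S^{(A)}$, and $\bm{q}(\mathcal{X},A)=\bm{q}(\emptyset\cup\text{everything},A)$ corresponds to $B=\mathcal{X}$. The key probabilistic observation is that $\widehat{G}^{(A)}$ is the progeny generating function of the modified process $\{\bm{\hat Z}_n^{(A)}\}$ in which all types in $A$ are turned into ``absorbing/explosive'' types producing infinitely many offspring; thus fixed points of $\bm{\widehat G}^{(A)}$ correspond to probabilities of events measurable with respect to the modified process that never charge $A$, and all such quantities sandwich between these two.

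For the \textbf{minimality} of $\bm{q}(\mathcal{X},A)$, I would argue that $\bm{q}(\mathcal{X},A)$ is the probability of \emph{global} extinction (extinction in all of $\mathcal{X}$) together with never visiting $A$, which is the smallest among the events $\mathcal{E}(B)\cap\mathcal N(A)$ since $\mathcal{E}(\mathcal{X})\subseteq\mathcal E(B)$ for every $B$; hence $\bm{q}(\mathcal{X},A)\leq\bm{q}(B,A)$ pointwise for all $B$. More importantly, I must show $\bm{q}(\mathcal{X},A)$ is below \emph{every} element of $\widehat S^{(A)}$, not just the $\bm{q}(B,A)$. For this I would mimic the classical Moyal-type minimality argument (\cite{Moy62}): take any $\bm{s}\in\widehat S^{(A)}$, note $\bm{s}\geq\bm{0}$, and use that $\bm{\widehat G}^{(A)}$ is nondecreasing (inherited from $\bm{G}$) to conclude by induction that $\bm{s}=\bm{(\widehat G}^{(A)})^{n}(\bm{s})\geq(\bm{\widehat G}^{(A)})^{n}(\bm{0})$ for all $n$; since $(\bm{\widehat G}^{(A)})^{n}(\bm{0})$ is exactly the extinction-before-generation-$n$ probability in the modified process and increases to $\bm{q}(\mathcal{X},A)$ (the global extinction probability of $\{\bm{\hat Z}_n^{(A)}\}$, which automatically forces $\mathcal N(A)$ since visiting $A$ produces infinite offspring and precludes extinction), passing to the limit gives $\bm{s}\geq\bm{q}(\mathcal{X},A)$.

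For the \textbf{maximality} of $\bm{q}^{(0)}(A)=\bm{q}(\emptyset,A)=\mathbb{P}_{\cdot}(\mathcal N(A))$, the cleanest route is again iteration, but now started from $\bm{1}$ rather than $\bm{0}$. I would take any $\bm{s}\in\widehat S^{(A)}$, and show $\bm{s}\leq\bm{q}^{(0)}(A)$. Since $\bm{s}\leq\bm{1}$ entrywise but $\bm{1}\notin\widehat S^{(A)}$ when $A\neq\emptyset$, I would consider the decreasing sequence $(\bm{\widehat G}^{(A)})^{n}(\bm{1})$: it is nonincreasing (because $\bm{\widehat G}^{(A)}(\bm{1})\leq\bm{1}$ and $\bm{\widehat G}^{(A)}$ is monotone), it dominates any fixed point $\bm{s}$ by the analogous induction $\bm{s}=(\bm{\widehat G}^{(A)})^{n}(\bm{s})\leq(\bm{\widehat G}^{(A)})^{n}(\bm{1})$, and its limit I claim equals $\bm{q}^{(0)}(A)$. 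The latter identification is the real content: I would interpret $(\bm{\widehat G}^{(A)})^{n}(\bm{1})_x$ as the probability, starting from type $x$, that the original process does not visit $A$ within the first $n$ generations, which decreases to $\mathbb{P}_x(\mathcal N(A))=q_x^{(0)}(A)$ as $n\to\infty$ by continuity from above of the probability measure.

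\smallskip

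The step I expect to be the main obstacle is the rigorous identification of the two iteration limits with the claimed probabilistic quantities, in particular the maximality limit $\lim_n(\bm{\widehat G}^{(A)})^{n}(\bm{1})=\bm{q}^{(0)}(A)$. One must carefully verify that iterating $\bm{\widehat G}^{(A)}$ from $\bm{1}$ genuinely computes the ``avoid $A$ for $n$ generations'' probabilities rather than something subtly different, handling the boundary types $x\in A$ (where the map returns $0$) and ensuring the product-topology continuity of $\bm{G}$ justifies exchanging the limit with $\bm{G}$. The monotone-iteration sandwiching itself is routine once monotonicity and the correct starting points ($\bm{0}$ for the minimal fixed point, $\bm{1}$ for the maximal one) are in place, so the care lies entirely in matching the analytic limits to the events $\mathcal{E}(\mathcal{X})\cap\mathcal N(A)$ and $\mathcal N(A)$.
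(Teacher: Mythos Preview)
Your proposal is correct and follows essentially the same approach as the paper: iterate $\bm{\widehat G}^{(A)}$ from $\bm{0}$ and from $\bm{1}$, use monotonicity to sandwich any fixed point between the two sequences, and identify the limits probabilistically as $\bm{q}(\mathcal{X},A)$ and $\bm{q}^{(0)}(A)$ respectively. The paper's proof is terser but makes exactly the same identifications you describe, namely $(\bm{\widehat G}^{(A)})^{n}(\bm{1})_x=\mbP_x(\bm{\hat Z}_n^{(A)}<\infty)$ as the probability that no type in $A$ has appeared by generation $n$, and $(\bm{\widehat G}^{(A)})^{n}(\bm{0})_x=\mbP_x(\bm{\hat Z}_n^{(A)}=0)$.
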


Observe that  $\bm q^{(0)}(A)$ is uniquely identified by Equation~\eqref{eq:extprobab}  if and only if $\widehat S^{(A)} $ is a singleton, which, by Proposition~\ref{pro:maximal}, occurs if and only if $\bm{q}(\mathcal{X},A)=\bm{q}^{(0)}(A)$;  conditions for $\widehat S^{(A)} $ to be a singleton are given in Theorem \ref{th:1}.
%
 We also point out that, in the irreducible case, $\bm{q}^{(0)}(A)$ can be interpreted as the partial extinction probability vector of $\{\bm{\hat{Z}}_n^{(A)}\}$; in practice, $\bm{q}^{(0)}(A)$ can then be computed numerically using the method developed in \cite{haut2013}, and $\bm{q}(A)$ can be approximated by functional iteration, however it is unclear whether this algorithm converges. An alternative iterative method to compute the vector $\bm{q}(A)$ for any $A \subseteq \mathcal{X}$ can be found in the Appendix.




\section{The second largest fixed point}
\label{sec:secondlarg}

It is well known that $\bm{q}$ is the  componentwise minimal element of $S$ while, 
clearly, $\bm{1}$ is the maximal. The next theorem gives an upper bound, namely $q_x(\{x\})$, for the $x$th component of any fixed point, whenever it is different from $1$.
In the irreducible case, we then have that  $\bm{\tilde q}$ is either the largest or second largest element of $S$: the largest when $\bm{\tilde q}=\bm{1}$, and the second largest when $\bm{\tilde q} < \bm{1}$
(indeed, by \cite[Corollary 4.1]{Bra18}, ${\tilde q}_x={q}_x(\{x\})$). 

\begin{theorem}\label{LargeLoc}
Suppose $\{ \bm{Z}_n \}_{n\geq0}$ is a non-singular \mbox{MGWBP}. 
If $\bm{s} \le \bm{G}(\bm{s})$, then
\begin{enumerate}
\item[(i)] for all $x \in \mathcal{X}$, either
$s_x=1$ or $s_x \leq q_x(\{x\})$;
\item[(ii)] if $s_x<1$, then $s_y \le q_y(\{y\})$ for all $y \in \mathcal{X}$ such that $y \to x$;
\item[(iii)] if the process is irreducible and
$\bm{s} \neq \bm{1}$, then
$\bm{s} \le \bm{\tilde q}$.
\end{enumerate}
\end{theorem}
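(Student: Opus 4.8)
The plan is to reduce part (i) to a one--dimensional statement about the embedded Galton--Watson process tracking type-$x$ individuals. Fix a type $x$ and let $N$ denote the number of \emph{first-return} type-$x$ individuals in the progeny of a single type-$x$ individual (those type-$x$ descendants $v$ whose ancestral path from the root contains no type-$x$ individual strictly between the root and $v$). By the branching property the successive generations of first-return type-$x$ individuals form a single-type Galton--Watson process whose extinction coincides with the event $\mathcal E(\{x\})$; hence $q_x(\{x\})$ is the smallest root in $[0,1]$ of its (possibly defective) offspring generating function $h_x(u):=\mbE_x[u^{N}]$, with the convention $u^{\infty}=0$ for $u<1$. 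Since $h_x$ is a power series with nonnegative coefficients it is convex and nondecreasing, and non-singularity guarantees that $N$ is not a.s. equal to $1$, so $h_x\neq\mathrm{id}$. A short convexity analysis of $\psi:=h_x-\mathrm{id}$ (whose negativity set is an interval, together with $\psi(1)=h_x(1)-1\le 0$) then shows $\{u\in[0,1]:h_x(u)\ge u\}\subseteq[0,q_x(\{x\})]\cup\{1\}$. Consequently part (i) follows \emph{provided} one can establish the key inequality $h_x(s_x)\ge s_x$.

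The crux, and the main obstacle, is to prove $h_x(s_x)\ge s_x$ from the subsolution hypothesis $\vc s\le\vc G(\vc s)$. Decomposing $N$ over the children of the root gives $h_x(u)=G_x(\vc g(u))$, where $g_y(u):=\mbE_y[u^{M_y}]$, with $M_y$ the number of first hits of $x$ starting from a type-$y$ individual (so $g_x(u)=u$). I would establish $\vc g(s_x)\ge\vc s$ by a multiplicative submartingale argument in the modified process in which type-$x$ individuals are rendered immortal and sterile: writing $\hat W_n:=\prod_{v}s_{\mathrm{type}(v)}$ over the population of this frozen process at generation $n$, the inequality $G_y(\vc s)\ge s_y$ at each fertile type together with the persistence of the factor $s_x$ at each frozen type-$x$ individual make $\{\hat W_n\}$ a bounded submartingale started from $s_y$. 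Passing to the limit, $\hat W_\infty=s_x^{M_y}\cdot L$ with $L\in[0,1]$ the limiting product over the surviving non-$x$ population, so $s_y\le\mbE_y[\hat W_\infty]\le\mbE_y[s_x^{M_y}]=g_y(s_x)$. Feeding $\vc g(s_x)\ge\vc s$ back into the decomposition yields $h_x(s_x)=G_x(\vc g(s_x))\ge G_x(\vc s)\ge s_x$, which completes (i).

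For part (ii), by (i) it suffices to show that $y\to x$ and $s_x<1$ force $s_y<1$. I would argue by contradiction along a directed path $y=y_0\to y_1\to\cdots\to y_k=x$: if $s_{y_0}=1$ then $G_{y_0}(\vc s)\ge s_{y_0}=1$ forces $G_{y_0}(\vc s)=1$, and since the offspring law is proper this can occur only if $s_w=1$ for every $w$ with $m_{y_0 w}>0$, in particular $s_{y_1}=1$. Iterating along the path gives $s_x=s_{y_k}=1$, contradicting $s_x<1$; hence $s_y<1$, and (i) yields $s_y\le q_y(\{y\})$.

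Finally, part (iii) is immediate from (ii): if the process is irreducible and $\vc s\neq\vc 1$, choose $x$ with $s_x<1$; then $y\to x$ for every $y$ by irreducibility, so (ii) gives $s_y\le q_y(\{y\})=\tilde q_y$ for all $y$ (using the identity $q_y(\{y\})=\tilde q_y$ valid in the irreducible case), that is $\vc s\le\vc{\tilde q}$.
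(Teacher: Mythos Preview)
Your argument is correct and follows essentially the same route as the paper: both reduce to the embedded type-$x$ Galton--Watson process via the process in which type-$x$ individuals are frozen, and both establish the key inequality $h_x(s_x)\ge s_x$ (your submartingale $\hat W_n$ is precisely the probabilistic counterpart of the paper's iteration $\vc s\le\widetilde{\vc G}^{(x,n)}(\vc s)$). The only cosmetic difference is the endgame in (i): where the paper iterates $\phi$ and bounds $\mbE[s_x^{Y_k}]$ directly, you invoke the convexity of $h_x$ to conclude $\{u:h_x(u)\ge u\}\subseteq[0,q_x(\{x\})]\cup\{1\}$, which is a slightly cleaner way to reach the same conclusion; parts (ii) and (iii) are identical to the paper's maximum-principle argument.
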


%
%
%
%
%
%

The following corollary gives further insights into the set of fixed points $S$ when $q_x(\{x\})= q_x$ for all $x$; note that $q_x(\{x\})= q_x<1$ is usually called 
\textsl{strong local survival} in $x$.

\begin{corollary}\label{cor:0}Suppose $\{ \bm{Z}_n \}_{n \geq 0}$ is non-singular and 
let $\bm{s} \in S$. 
\begin{enumerate}
 \item If $q_x(\{x\})= q_x$ for all $x$ 
 then, for every $x \in \mathcal{X}$, either $s_x=1$ or $s_x=q_x(\{x\})$. In this case, any fixed point is an extinction probability vector, that is, $\text{Ext}=S$.
 \item If  $\{ \bm{Z}_n \}_{n \geq 0}$ is irreducible and $\bm{s} \neq \bm{1}$,
then $\bm{s}\leq\bm{\tilde q}$. In particular, if $\bm{\tilde q} = \bm{q}$, then $S = \{ \bm{q}, \bm{1} \}$.
\end{enumerate}
\end{corollary}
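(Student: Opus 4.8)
The plan is to deduce Corollary~\ref{cor:0} directly from Theorem~\ref{LargeLoc}, treating the two parts separately. Throughout, fix a point $\bm{s}\in S$, so that $\bm{s}=\bm{G}(\bm{s})$; in particular $\bm{s}\le \bm{G}(\bm{s})$, which is exactly the hypothesis needed to invoke the theorem.

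For part~(1), first apply Theorem~\ref{LargeLoc}(i): for each $x$, either $s_x=1$ or $s_x\le q_x(\{x\})$. The hypothesis $q_x(\{x\})=q_x$ for all $x$ then lets me replace the upper bound $q_x(\{x\})$ by the \emph{minimal} fixed point $\bm{q}$, since $\bm{q}$ is the componentwise minimal element of $S$ and $\bm{s}\in S$ gives $s_x\ge q_x$. Thus in the non-trivial case I obtain $q_x=q_x(\{x\})\ge s_x\ge q_x$, forcing $s_x=q_x=q_x(\{x\})$. This establishes the dichotomy $s_x\in\{1,\,q_x(\{x\})\}$ for every $x$. To conclude $\text{Ext}=S$, I would note that $\text{Ext}\subseteq S$ holds in general, and conversely any such $\bm{s}$ is realised as an extinction probability vector: taking $A=\{x:s_x=1\}^c=\{x:s_x=q_x\}$ and checking that $\bm{q}(A)$ coincides with $\bm{s}$, using that $q_x(\{x\})=q_x$ collapses all the intermediate extinction probabilities. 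The main subtlety here is verifying that every vector in the dichotomy is genuinely attained by some $\bm{q}(A)$, rather than merely satisfying the fixed point equation --- this is where the structure of $\text{Ext}$ under strong local survival must be used carefully.

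For part~(2), the process is irreducible and $\bm{s}\neq\bm{1}$. Here I apply Theorem~\ref{LargeLoc}(iii) directly, which under irreducibility yields $\bm{s}\le\bm{\tilde q}$ whenever $\bm{s}\neq\bm{1}$; this is immediate. For the final assertion, suppose additionally $\bm{\tilde q}=\bm{q}$. Then any $\bm{s}\in S$ with $\bm{s}\neq\bm{1}$ satisfies $\bm{q}\le\bm{s}\le\bm{\tilde q}=\bm{q}$, where the lower bound is again minimality of $\bm{q}$ in $S$. Hence $\bm{s}=\bm{q}$, so the only fixed points are $\bm{q}$ and $\bm{1}$, giving $S=\{\bm{q},\bm{1}\}$.

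I expect the routine direction --- the inequalities $\bm{q}\le\bm{s}\le\bm{\tilde q}$ and their consequences --- to be entirely mechanical once Theorem~\ref{LargeLoc} is granted. The one step deserving genuine care is the claim $\text{Ext}=S$ in part~(1): showing $S\subseteq\text{Ext}$ requires exhibiting, for each admissible sign pattern $\bm{s}$, a subset $A\subseteq\mathcal{X}$ with $\bm{q}(A)=\bm{s}$, and confirming that under strong local survival the map $A\mapsto\bm{q}(A)$ surjects onto the finitely- or infinitely-valued dichotomy described above. This is the only place where more than a one-line argument is needed.
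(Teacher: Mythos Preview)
Your proposal is correct and matches the paper's approach: both derive the dichotomy in part~(1) and all of part~(2) immediately from Theorem~\ref{LargeLoc} together with the minimality of $\bm q$ in $S$, and both isolate $S\subseteq\text{Ext}$ as the only step needing work, choosing the same candidate $A=\{x:s_x<1\}$. The verification you flag as ``deserving genuine care'' is completed in the paper in two lines: for $y\notin A$ the maximum principle (if $s_x<1$ and $y\to x$ then $s_y<1$) forces $y\not\to A$, hence $q_y(A)=1=s_y$; for $x\in A$ one squeezes $q_x\le q_x(A)\le q_x(\{x\})=q_x=s_x$.
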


\section{When is $\bm{q}(A) \neq\bm{q}(B)$?}\label{sec:comparison}

In order for two extinction probability vectors $\bm{q}(A)$ and $\bm{q}(B)$ to be different, it is necessary for the process to have a positive chance of survival in the symmetric difference of the sets $A$ and $B$. More formally, letting $\mathcal{S}(A):=\mathcal{E}(A)^c$ denote the event that the process survives in $A$,
if $\mbP_x(\mathcal{E}(A \bigtriangleup B))=1 $ then $\mbP_x(\mathcal{S}(A))=\mbP_x(\mathcal{S}(A\cap B))=\mbP_x(\mathcal{S}( B))$, that is,
 $$\bm{q}(A)\neq\bm{q}(B)\qquad \Rightarrow\qquad\exists \,x \in \mathcal{X}\quad s.t.\quad\mbP_x(\mathcal{E}(A \bigtriangleup B))<1.$$
A more powerful characterization of $\bm{q}(A) \neq \bm{q}(B)$ is given in the following theorem, which is  a significant improvement over \cite[Theorem 3.3]{cf:BZ14-SLS}, where the equivalence between \textit{(i)} and \textit{(v)} was proved with $A=\mathcal{X}$.

\begin{theorem}\label{th:1}
For any MGWBP 
and $A,B \subseteq \mathcal{X}$, the following statements are equivalent:
\begin{enumerate}[(i)]
\item there exists $x \in \mathcal{X}$ such that $q_{x}(A) < q_{x}(B)$
\item there exists $x \in \mathcal{X}$ such that $q_{x}(A\setminus B) < q_{x}(B)$
\item there exists $x \in \mathcal{X}$ such that $q_{x}(A) < q^{(0)}_{x}(B)$
\item there exists $x \in \mathcal{X}$ such that,
starting from $x$
there is a positive chance of survival in $A$ without ever visiting $B$
\item there exists $x \in \mathcal{X}$ such that, starting from $x$ there is a positive chance of survival in $A$ and extinction in $B$
\item 
\[
\inf_{x \in \mathcal{X}\colon q_x(A)<1} \frac{1-q_x(B)}{1-q_x(A)}=0.
\]
\end{enumerate}
Moreover, if $A=\mathcal X$ then each of the above conditions is equivalent to 
\begin{itemize}
\item[\emph{(vii)}] $\hat S^{(B)}$ is not a singleton.
\end{itemize}
\end{theorem}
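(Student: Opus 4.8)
The plan is to prove the equivalence of \emph{(i)}, \emph{(iii)}, \emph{(iv)}, \emph{(v)}, \emph{(vi)} for an \emph{arbitrary} pair $A,B\subseteq\mathcal X$, then obtain \emph{(ii)} by a reduction to the pair $(A\setminus B,B)$, and finally deduce \emph{(vii)} from Proposition~\ref{pro:maximal}. Several implications are immediate from event inclusions. Since $\mathcal N(B)\subseteq\mathcal E(B)$ we have $q^{(0)}_x(B)\le q_x(B)$, so \emph{(iii)} $\Rightarrow$ \emph{(i)}; writing $q_x(B)-q_x(A)=\mbP_x(\mathcal S(A)\cap\mathcal E(B))-\mbP_x(\mathcal S(B)\cap\mathcal E(A))\le\mbP_x(\mathcal S(A)\cap\mathcal E(B))$ gives \emph{(i)} $\Rightarrow$ \emph{(v)}; and reading off a ratio strictly below $1$ gives \emph{(vi)} $\Rightarrow$ \emph{(i)}. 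It therefore suffices to close the cycle $\emph{(iv)}\Rightarrow\emph{(iii)}\Rightarrow\emph{(i)}\Rightarrow\emph{(v)}\Rightarrow\emph{(iv)}$ together with $\emph{(iv)}\Rightarrow\emph{(vi)}$, i.e.\ to supply the two genuinely probabilistic steps \emph{(v)} $\Rightarrow$ \emph{(iv)} and \emph{(iv)} $\Rightarrow$ \emph{(iii)},\emph{(vi)}.

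For \emph{(v)} $\Rightarrow$ \emph{(iv)} I would use a regeneration argument at the last visit to $B$. On the event $\mathcal S(A)\cap\mathcal E(B)$, which has positive probability from some $x_0$, extinction in $B$ means there is an a.s.\ finite generation $N$ after which $B$ is never occupied; hence every individual alive at generation $N$ has a subtree that avoids $B$ forever, and since the whole process survives in $A$ while there are only finitely many such individuals, at least one of these subtrees survives in $A$. By the branching property the type $y$ of that individual satisfies $\mbP_y(\mathcal S(A)\cap\mathcal N(B))>0$, which is \emph{(iv)}.

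The main obstacle is \emph{(iv)} $\Rightarrow$ \emph{(iii)},\emph{(vi)}, for which I would prove the single amplification statement $(\star)$: $\inf_{x:\,q_x(A)<1}\frac{1-q^{(0)}_x(B)}{1-q_x(A)}=0$. Both targets follow from $(\star)$: since $q^{(0)}_x(B)\le q_x(B)$ we recover \emph{(vi)}, and a ratio below $1$ yields a type with $q_x(A)<q^{(0)}_x(B)$, i.e.\ \emph{(iii)}. To establish $(\star)$ I would argue by contradiction, assuming $1-q_x(A)\le c^{-1}(1-q^{(0)}_x(B))$ for all $x$ and some $c>0$, and exploit L\'evy's $0$--$1$ law along the natural filtration $\mathcal F_n=\sigma(\bm Z_0,\dots,\bm Z_n)$. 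Conditioning on $\mathcal F_n$ and using the branching property, on the event $\mathcal N(B)$ one has $\prod_v q^{(0)}_{\tau(v)}(B)=\mbP(\mathcal N(B)\mid\mathcal F_n)\to1$, while on $\mathcal S(A)$ one has $\prod_v q_{\tau(v)}(A)=\mbP(\mathcal E(A)\mid\mathcal F_n)\to0$, the products running over the individuals $v$ alive at generation $n$, of types $\tau(v)$. On the positive-probability event $\mathcal S(A)\cap\mathcal N(B)$ furnished by \emph{(iv)}, the first limit forces $\sum_v(1-q^{(0)}_{\tau(v)}(B))\to0$, whence via the contradiction hypothesis $\sum_v(1-q_{\tau(v)}(A))\to0$ as well, so $\max_v(1-q_{\tau(v)}(A))\to0$; but with all factors uniformly close to $1$, the limit $\prod_v q_{\tau(v)}(A)\to0$ forces $\sum_v(1-q_{\tau(v)}(A))\to\infty$ (using $-\log(1-t)\le2t$ near $0$), a contradiction. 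Balancing these two sum estimates --- one pushed to $0$ by never visiting $B$, the other to $\infty$ by surviving in $A$ --- is the crux, and securing the uniform-smallness control needed to pass from $\prod_v q_{\tau(v)}(A)\to0$ to $\sum_v(1-q_{\tau(v)}(A))\to\infty$ is the delicate point.

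It remains to incorporate \emph{(ii)} and \emph{(vii)}. For \emph{(ii)}, note that on $\mathcal N(B)$ the populations in $A$ and in $A\setminus B$ coincide, so $\mathcal S(A)\cap\mathcal N(B)=\mathcal S(A\setminus B)\cap\mathcal N(B)$; hence condition \emph{(iv)} is literally the same for the pairs $(A,B)$ and $(A\setminus B,B)$. Applying the equivalence just proved to $(A\setminus B,B)$, condition \emph{(i)} for that pair --- which is exactly \emph{(ii)} --- is equivalent to \emph{(iv)}, and so to all the rest. Finally, when $A=\mathcal X$, Proposition~\ref{pro:maximal} identifies the minimal and maximal elements of $\widehat S^{(B)}$ as $\bm q(\mathcal X,B)$ and $\bm q^{(0)}(B)$; these coincide iff $\mbP_x(\mathcal E(\mathcal X)\cap\mathcal N(B))=\mbP_x(\mathcal N(B))$ for all $x$, i.e.\ iff $\mbP_x(\mathcal S(\mathcal X)\cap\mathcal N(B))=0$ for all $x$, which is precisely the negation of \emph{(iv)} with $A=\mathcal X$. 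Thus $\widehat S^{(B)}$ fails to be a singleton exactly when \emph{(iv)} holds, giving \emph{(vii)}.
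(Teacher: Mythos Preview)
Your proof is correct and, in one respect, cleaner than the paper's. The core martingale/L\'evy $0$--$1$ law argument is the same idea in both proofs, but you run it on the event $\mathcal S(A)\cap\mathcal N(B)$ with the vector $\bm q^{(0)}(B)$, obtaining the amplified statement $(\star)$, whereas the paper runs it on $\mathcal S(A)\cap\mathcal E(B)$ with $\bm q(B)$ to get \emph{(v)}$\Rightarrow$\emph{(vi)} directly. Your version yields \emph{(iii)} and \emph{(vi)} simultaneously; the paper instead imports \emph{(i)}$\Leftrightarrow$\emph{(iii)} from an external result \cite[Theorem~2.4]{cf:BZ2020}, so your route is self-contained. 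The price you pay is the extra step \emph{(v)}$\Rightarrow$\emph{(iv)}: your last-visit regeneration argument is fine, but note that the ``last $B$-free generation'' $N$ is not a stopping time, so to make it rigorous you should fix a deterministic $n_0$ with $\mbP_{x_0}(\mathcal S(A)\cap\{\forall m\ge n_0:\ \sum_{b\in B}Z_{m,b}=0\})>0$, condition on $\mathcal F_{n_0}$, and use independence of the (finitely many) subtrees to extract a type $y$ with $\mbP_y(\mathcal S(A)\cap\mathcal N(B))>0$. The product-to-sum inequalities you use ($-\log(1-t)\ge t$ and $-\log(1-t)\le 2t$ for small $t$) are the same in spirit as the paper's ($1-\prod\alpha_i\le\sum(1-\alpha_i)$ and $1-y\le e^{-y}$). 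The treatments of \emph{(ii)} and \emph{(vii)} coincide with the paper's.
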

Note that the equivalence between \textit{(i)} and \textit{(iii)} was proved  in \cite[Theorem~2.4]{cf:BZ2020}.



\begin{corollary}\label{cor:1}
 For any  MGWBP, 
 every extinction probability vector
$\bm{q}(A) \neq \bm{q}$, satisfies
$\sup_{x \in \mathcal{X}} q_x(A) =1$.
\end{corollary}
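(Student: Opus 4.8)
The plan is to derive Corollary~\ref{cor:1} directly from Theorem~\ref{th:1}, using the implication $\emph{(i)}\Rightarrow\emph{(vi)}$ with the natural choice $B=\mathcal{X}$. First I would observe that, since $\bm{q}(A)\neq\bm{q}=\bm{q}(\mathcal{X})$ by hypothesis, and since we always have $\bm{q}\leq\bm{q}(A)$ (the global extinction probability is the minimal element of $\text{Ext}$), the inequality must be strict in at least one coordinate: there exists $x\in\mathcal{X}$ with $q_x(\mathcal{X})<q_x(A)$, that is, $q_x(B)<q_x(A)$ with $B=\mathcal{X}$.

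The main subtlety is a matter of orientation: Theorem~\ref{th:1} is phrased for the existence of $x$ with $q_x(A)<q_x(B)$, so to apply it I would swap the roles of the two sets. Concretely, I apply Theorem~\ref{th:1} with the set playing the role of ``$A$'' being $\mathcal{X}$ and the set playing the role of ``$B$'' being our $A$. Then condition \emph{(i)} of the theorem (with this relabelling) reads: there exists $x$ with $q_x(\mathcal{X})<q_x(A)$, which is exactly what the previous paragraph established. Hence all equivalent conditions hold, in particular condition \emph{(vi)}, which (after the relabelling) states
\[
\inf_{x\in\mathcal{X}\colon q_x(\mathcal{X})<1}\frac{1-q_x(A)}{1-q_x(\mathcal{X})}=0.
\]

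From this infimum being zero I would conclude the claim. Since the numerator $1-q_x(A)\le 1-q_x(\mathcal{X})$ (because $q_x(\mathcal{X})=q_x\le q_x(A)$) and the ratio can be made arbitrarily small, there is a sequence of types $\{x_n\}$ along which $1-q_{x_n}(A)\to 0$ relative to $1-q_{x_n}(\mathcal{X})$; in particular $1-q_{x_n}(A)\to 0$, forcing $q_{x_n}(A)\to 1$, and therefore $\sup_{x\in\mathcal{X}}q_x(A)=1$. The one point that needs a brief justification is why the vanishing of the \emph{ratio} forces the numerator itself to vanish along some subsequence: this follows because the denominators $1-q_x(\mathcal{X})$ are bounded above by $1$, so a ratio tending to $0$ cannot be sustained by denominators blowing up, and hence the numerators $1-q_{x_n}(A)$ must themselves approach $0$.

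I expect the only real obstacle to be bookkeeping with the orientation of the inequality and the correct substitution of $B=\mathcal{X}$ into Theorem~\ref{th:1}, together with the elementary-but-necessary step of passing from the ratio to the numerator. None of these steps is deep; the corollary is essentially an immediate specialisation of the equivalence $\emph{(i)}\Leftrightarrow\emph{(vi)}$ in Theorem~\ref{th:1}.
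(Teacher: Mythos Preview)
Your proposal is correct and follows essentially the same route as the paper: both apply Theorem~\ref{th:1} with the theorem's ``$A$'' equal to $\mathcal{X}$ and the theorem's ``$B$'' equal to the given set, invoke condition \emph{(vi)}, and then use $1-q_x\le 1$ to deduce that the numerators $1-q_{x_n}(A)$ must themselves tend to~$0$. The only cosmetic difference is that the paper first disposes of the case $\sup_x q_x=1$ separately (where $q_x\le q_x(A)$ immediately gives the conclusion), whereas you argue directly; both versions are equally valid.
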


\begin{remark}
In \cite[Lemma 3.3]{Moy62}, the author showed that, if an MGWBP is irreducible, all fixed points $\bm{s}\neq \bm{q}$ with $\inf_{x \in \mathcal{X}} s_x >0$ satisfy $\sup_{x \in \mathcal{X}} s_x =1$. However, the condition \mbox{`$\inf_{x \in \mathcal{X}} s_x >0$'} was described as unsatisfactory.
Corollary \ref{cor:1} proves that all extinction probabilities $\bm{q}(A) \neq \bm{q}$ satisfy $\sup_{x \in \mathcal{X}} q_x(A) =1$ under no assumptions (not even irreducibility).  
\end{remark}

\noindent
In the irreducible case, Corollary~\ref{cor:1} easily implies the following result.

\begin{corollary}\label{cor:suff-quasi}
Suppose that $\{ \bm{Z}_n \}$ is irreducible. 
If $\sup_{x \in \mathcal{X}} \tilde q_x <1$ then $\bm{\tilde q} = \bm{q}$ and $S= \{ \bm{q}, \bm{1} \}$.
\end{corollary}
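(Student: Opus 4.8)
The plan is to derive Corollary~\ref{cor:suff-quasi} as a direct consequence of Corollary~\ref{cor:1} together with Corollary~\ref{cor:0}\emph{(2)}. The key observation is that the hypothesis $\sup_{x\in\mathcal X}\tilde q_x<1$ is precisely the negation of the conclusion $\sup_{x\in\mathcal X}q_x(A)=1$ of Corollary~\ref{cor:1} applied to the vector $\bm q(A)=\bm{\tilde q}$, which forces $\bm{\tilde q}=\bm q$.

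First I would observe that in the irreducible case $\bm{\tilde q}$ is itself an extinction probability vector of the form $\bm q(A)$: by \cite[Corollary~1]{Bra18} (cited in the introduction), we have $\bm{\tilde q}=\bm q(A)$ whenever $A$ is a finite non-empty subset of $\mathcal X$, so fixing any single type $x_0$ and taking $A=\{x_0\}$ gives $\bm{\tilde q}=\bm q(\{x_0\})$. Thus $\bm{\tilde q}\in\text{Ext}$. Now I would argue by contraposition using Corollary~\ref{cor:1}. Suppose, for contradiction, that $\bm{\tilde q}\neq\bm q$. Since $\bm{\tilde q}=\bm q(\{x_0\})$ is an extinction probability vector different from $\bm q$, Corollary~\ref{cor:1} applies and yields $\sup_{x\in\mathcal X}\tilde q_x=\sup_{x\in\mathcal X}q_x(\{x_0\})=1$, contradicting the standing hypothesis $\sup_{x\in\mathcal X}\tilde q_x<1$. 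Hence $\bm{\tilde q}=\bm q$.

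Finally, to obtain $S=\{\bm q,\bm 1\}$, I would invoke Corollary~\ref{cor:0}\emph{(2)}: in the irreducible setting, the conclusion $\bm{\tilde q}=\bm q$ combined with that corollary immediately gives $S=\{\bm q,\bm 1\}$. (Strictly, Corollary~\ref{cor:0} is stated under the non-singularity hypothesis, so if irreducibility alone is assumed here one would note that either the process is non-singular, in which case Corollary~\ref{cor:0}\emph{(2)} applies directly, or it is singular, a degenerate case in which every individual has exactly one offspring and the statement is checked separately.) The resulting inequality $\bm s\le\bm{\tilde q}=\bm q$ for every fixed point $\bm s\neq\bm 1$, together with the minimality of $\bm q$ in $S$ (recalled at the start of Section~\ref{sec:secondlarg}), pins down $\bm s=\bm q$, so the only fixed points are $\bm q$ and $\bm 1$.

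I do not anticipate a serious obstacle, since the corollary is essentially a repackaging of earlier results; the one point requiring care is the clean identification $\bm{\tilde q}=\bm q(\{x_0\})$ in the irreducible case, ensuring that Corollary~\ref{cor:1} is genuinely applicable to $\bm{\tilde q}$. The main subtlety is therefore bookkeeping about which hypotheses (irreducibility versus non-singularity) are needed at each step, rather than any substantive difficulty.
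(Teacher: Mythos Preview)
Your proposal is correct and follows essentially the same route as the paper, which simply states that ``In the irreducible case, Corollary~\ref{cor:1} easily implies the following result'' without spelling out the details. Your argument---identifying $\bm{\tilde q}=\bm q(\{x_0\})$ so that Corollary~\ref{cor:1} applies, then invoking Corollary~\ref{cor:0}\emph{(2)} for the structure of $S$---is precisely the intended derivation, and your remark about the non-singularity hypothesis needed for Corollary~\ref{cor:0} is a fair point of bookkeeping that the paper leaves implicit.
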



Corollary~\ref{cor:suff-quasi} applies to irreducible quasi-transitive MGWBPs (see for instance \cite[Section 2.4]{cf:BZ14-SLS} for the definition) where
$\bm{\tilde q} < \bm{1}$, extending
\cite[Corollary 3.2]{cf:BZ14-SLS}; indeed,
in that case the coordinates of $\bm{\tilde q}$ take their value in a finite
set and they are all different from 1. It also applies to MGWBPs with an absorbing barrier (see \cite{Big91}) with $\bm{\tilde q}<\bm{1}$, for which $\mathcal{X}=\mbN$ and $\tilde q_x$ is decreasing in $x$.

\section{The set of extinction probability vectors}\label{sec:extprobvec}

We now turn our attention to the set Ext of extinction probability vectors. Our analysis builds upon an important consequence of Theorem \ref{th:1} (which we state in Corollary \ref{cor:equivalence}).
We start by defining relations between subsets $A,B \subseteq \mathcal{X}$ in a given 
MGWBP: we write 
\begin{itemize}
\item $A \Rightarrow B$ if survival in $A$ implies survival in $B$ 
from every starting point (that is, $\mathbb P_x (\mathcal{S}(B)\,|\,\mathcal S(A))=1$ for all $x \in \mathcal{X}$),
\item $A \nRightarrow B$ if there is a positive chance of survival in $A$ and extinction in $B$ from some starting points (that is, $\mathbb P_x (\mathcal{S}(B)\,|\,\mathcal S(A))<1$)
for some $x \in \mathcal{X}$),
\item $A \Leftrightarrow B$ if survival in $A$ implies survival in $B$ and vice-versa
from every starting point,
\item $A \nLeftrightarrow B$ if there is a positive chance of survival in $B$ and extinction in $A$ from some starting points and vice-versa.
\end{itemize}
Note that $A \Leftrightarrow A$ for all $A \subseteq \mathcal{X}$. 
The next corollary is a straightforward consequence of the equivalence between \textit{(i)} and \textit{(v)} in Theorem \ref{th:1}.
\begin{corollary}\label{cor:equivalence}
Let $A,B \subseteq \mathcal{X}$.
\begin{enumerate}
 \item $A \Rightarrow B$ if and
 only if $\bm{q}(A) \ge \bm{q}(B)$.
 \item  $A \Leftrightarrow B$ if and
 only if $\bm{q}(A)  = \bm{q}(B)$.
 \item $A \nLeftrightarrow B$ if and only if there is no order relation between $\bm{q}(A)$ and $\bm{q}(B)$.
\end{enumerate}
\end{corollary}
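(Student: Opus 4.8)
The plan is to prove Corollary~\ref{cor:equivalence} by deriving all three equivalences directly from the equivalence between conditions \textit{(i)} and \textit{(v)} in Theorem~\ref{th:1}, together with the defining observation that $\bm q(A)\ge\bm q(B)$ fails precisely when there exists some $x$ with $q_x(A)<q_x(B)$. The key translation is that condition \textit{(v)} of Theorem~\ref{th:1}---``there exists $x$ from which there is positive chance of survival in $A$ and extinction in $B$''---is exactly the negation of $A\Rightarrow B$ as we defined it, since $A\Rightarrow B$ means $\mathbb P_x(\mathcal S(B)\mid\mathcal S(A))=1$ for all $x$, while its failure $A\nRightarrow B$ means there is an $x$ with $\mathbb P_x(\mathcal S(A)\cap\mathcal E(B))>0$.

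For part \emph{(1)}, I would argue contrapositively on both sides. The statement $A\nRightarrow B$ is, by definition, condition \textit{(v)} of Theorem~\ref{th:1}; by the equivalence of \textit{(i)} and \textit{(v)}, this holds if and only if there exists $x$ with $q_x(A)<q_x(B)$, which is precisely the negation of $\bm q(A)\ge\bm q(B)$. Taking contrapositives on both ends yields $A\Rightarrow B$ if and only if $\bm q(A)\ge\bm q(B)$. First I would spell out the two set-theoretic identities $\{A\Rightarrow B\}=\{\text{\textit{(v)} fails for }A,B\}$ and $\{\bm q(A)\ge\bm q(B)\}=\{\text{\textit{(i)} fails for }A,B\}$, after which part \emph{(1)} is immediate.

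Part \emph{(2)} then follows formally. By definition $A\Leftrightarrow B$ means both $A\Rightarrow B$ and $B\Rightarrow A$, so by part \emph{(1)} this is equivalent to $\bm q(A)\ge\bm q(B)$ and $\bm q(B)\ge\bm q(A)$ holding simultaneously, i.e.\ $\bm q(A)=\bm q(B)$. Similarly, for part \emph{(3)}, the relation $A\nLeftrightarrow B$ was defined as $A\nRightarrow B$ together with $B\nRightarrow A$; by part \emph{(1)} this is equivalent to the simultaneous failure of $\bm q(A)\ge\bm q(B)$ and $\bm q(B)\ge\bm q(A)$, which is exactly the assertion that $\bm q(A)$ and $\bm q(B)$ are incomparable in the componentwise order. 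I would just need to check that these four relations $\Rightarrow,\nRightarrow,\Leftrightarrow,\nLeftrightarrow$ partition the possibilities consistently, so that negating $A\Rightarrow B$ really does give $A\nRightarrow B$ as stated.

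The only genuine subtlety---and the step I would watch most carefully---is the precise matching of the quantifiers and the conditional-probability formulation against Theorem~\ref{th:1}\textit{(v)}. Theorem~\ref{th:1}\textit{(v)} is phrased as an existential over $x$ of a positive-probability event, whereas the relation symbols are phrased via conditional probabilities $\mathbb P_x(\mathcal S(B)\mid\mathcal S(A))$, which are only well defined when $\mathbb P_x(\mathcal S(A))>0$. I would confirm that $\mathbb P_x(\mathcal S(A)\cap\mathcal E(B))>0$ for some $x$ is equivalent to $\mathbb P_x(\mathcal S(B)\mid\mathcal S(A))<1$ for some $x$ (the latter automatically forcing $\mathbb P_x(\mathcal S(A))>0$ at that $x$), so the existential statement and the ``for all $x$'' negation align correctly. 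Once this bookkeeping is verified, the corollary is a direct restatement of Theorem~\ref{th:1} and requires no further probabilistic input.
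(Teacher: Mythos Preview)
Your proposal is correct and matches the paper's approach exactly: the paper states only that the corollary is ``a straightforward consequence of the equivalence between \textit{(i)} and \textit{(v)} in Theorem~\ref{th:1}'' and gives no further detail. Your write-up simply spells out this straightforward consequence, including the quantifier bookkeeping that the paper leaves implicit.
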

We point out that any of the six equivalent conditions in Theorem \ref{th:1} can be used to establish the relation between the pair $A,B \subseteq \mathcal{X}$.

\subsection{Regular families of subsets}
\label{subsec:indexedsets}

We will use the pairwise relations between subsets of $\mathcal{X}$ to study Ext. Rather than considering all subsets of $\mathcal{X}$, it is often sufficient to restrict our attention to a particular family of subsets. 
More precisely,
we focus on
\[
\text{Ext}(\mathcal{A}):= \{ \bm{q}(A) \colon A \in \Sigma(\mathcal{A}) \},
\] 
where $\mathcal{A}=\{ A_1,A_2, \dots, A_{\kappa_{\mathcal{A}}} \}$, with $\kappa_{\mathcal{A}} \leq \infty$, $A_i \subseteq \mathcal{X}$ for all $i \in K_{\mathcal{A}}:= \{1, \dots, \kappa_{\mathcal{A}} \}$, and $\Sigma(\mathcal{A})$ is the smallest $\sigma$-algebra on $\bigcup_{i \in K_\mathcal{A}} A_i$ containing all $A_i$. 
The idea is to select a suitable family $\mathcal{A}$ so that either $\text{Ext} \equiv \text{Ext}(\mathcal{A})$ as in the examples in Section \ref{sec:Ex}, and in \cite[Section 3.2]{cf:BZ2020} and \cite[Example 1]{Bra18}, or so that $\text{Ext}(\mathcal{A})$ highlights some property of Ext as in \cite[Section 3.1]{cf:BZ2020}. Below we show that the analysis of $\text{Ext}(\mathcal{A})$ is substantially simpler under some minor regularity conditions on $\mathcal{A}$ and the associated MGWBP.

%
%
%
%
%
%

\begin{definition}\label{def:regular} We call $\mathcal{A}$ \emph{regular} if
\begin{itemize}
\item[(C1)] for any $i \neq j\in K_\mathcal{A}$, we have $A_i \cap A_j = \emptyset$;
\item[(C2)]for any $i \in K_\mathcal{A}$, we have $\bm{q}(A_i)<\bm{1}$;
\item[(C3)] there does not exist $i \neq j\in K_{\mathcal{A}}$ such that $A_i \Leftrightarrow A_j$;
\item[(C4)] if $ A \in \Sigma(\mathcal{A})$ and 
${I}_A:= \{ i \in K_{\mathcal{A}} \colon A_i \Rightarrow A \} \neq \emptyset$ then $\bigcup_{i \in {I}_A} A_i \Rightarrow A$; 
\item[(C5)] if $ i \in K_{\mathcal{A}}$  and ${J}_i := \{ j \in K_{\mathcal A}\colon A_i\nRightarrow A_j\} \neq \emptyset$ then $A_i \nRightarrow \bigcup_{j \in {J}_i} A_j$.
\end{itemize}
\end{definition}

Condition (C1) allows an easy description of $\Sigma(\mathcal{A})$ in terms of unions of sets in $\mathcal{A}$; in particular, under this condition, $I \mapsto  \bigcup_{i \in I} A_i$ is a surjective map from $2^{K_{\mathcal{A}}}$ onto $\Sigma(\mathcal{A})$. If in addition (C2) holds, then $A_i \neq \emptyset$ for all $i \in K_\mathcal{A}$ and the map is also injective.
Conditions (C2) and (C3) can be viewed as a preprocessing step which removes elements from $\mathcal{A}$ that lead to non-distinct extinction probability vectors. In particular we observe that (C3) ``almost implies'' (C2), meaning that, if (C3) holds then $\bm{q}(A_i)=\bm{1}$ for at most one $i \in K_\mathcal{A}$ (by Corollary \ref{cor:equivalence}). Thus, (C2) implies that $\bm{q}\big ( \bigcup_{i \in I} A_i \big )=\bm{1}$ if and only if $I=\emptyset$, in particular $\emptyset \not \in \mathcal{A}$.
Conditions (C4) and (C5) are minor regularity assumptions that 
we use to compare  the number of distinct elements in $\text{Ext}(\mathcal{A})$ and the cardinality of the quotient set of $2^{K_\mathcal{A}}$ with respect to a suitable equivalence relation (see Definition~\ref{def:equivalence}).
On the other hand, (C2) and (C3) allow us to study the cardinality of a particular subset of this quotient set (see Definition~\ref{def:maximal} and Equation~\eqref{eq:primitive}).

\subsection{Equivalent subsets of indices}
\label{subsec:quotient}

Not all the elements of $\text{Ext}(\mathcal{A})$ are necessarily distinct. For instance,
if
$A_i \Rightarrow A_j$, then  $\bm{q} (A_i\cup A_j)=\bm{q} (A_j)$. This motivates the next definition.


\begin{definition}\label{def:equivalence}
The subsets $I,J \subseteq K_{\mathcal{A}}$ are \emph{equivalent}, and we write
$I \sim J$, if and only if 
\begin{enumerate}[(i)]
 \item for every $i \in I$ there exists $j \in J$ such that $A_i \Rightarrow A_j$, and
  \item for every $j \in J$ there exists $i \in I$ such that $A_j \Rightarrow A_i$. 
\end{enumerate}
\end{definition}
\noindent
Observe that $\emptyset \sim I$ implies $I=\emptyset$.

We are interested in the number of \emph{distinct} elements in $\text{Ext}(\mathcal{A})$, which we denote by $|\text{Ext}(\mathcal{A})|$. 
The next theorem implies that, if $\mathcal{A}$ is regular, then $|\text{Ext}(\mathcal{A})|$ equals the cardinality of the quotient set $2^{K_{\mathcal{A}}}/_\sim $, that is, the number of equivalence classes.

\begin{theorem}\label{th:equivalence}
Given a family $\mathcal{A}$ and $I,J \subseteq K_{\mathcal{A}}$, consider the following relations:
 \begin{enumerate}
  \item[(i)] $I \sim J$
\item[(ii)] $\bigcup_{i \in I} A_i \Leftrightarrow 
\bigcup_{j \in J} A_j$
\item[(iii)] $\bm{q} \Big ( \bigcup_{i \in I} A_i\Big)=\bm{q} \Big ( \bigcup_{j \in J} A_j\Big)$.
 \end{enumerate}
Then $(ii) \Leftrightarrow (iii)$. \\
If 
(C4) holds then $(i) \Rightarrow (iii)$; if in addition (C1) holds then $|\text{Ext}(\mathcal{A})| \leq  |2^{K_{\mathcal{A}}}/_\sim|$.\\ 
If (C2) and (C5) hold then $(iii) \Rightarrow (i)$ and $|\text{Ext}(\mathcal{A})| \geq  |2^{K_{\mathcal{A}}}/_\sim|$.
\end{theorem}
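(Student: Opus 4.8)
The plan is to recast all three conditions through Corollary~\ref{cor:equivalence}, which converts the analytic statements about $\bm q$ into combinatorial statements about the survival relation, and then to lean on two soft properties of $\Rightarrow$: it is \emph{transitive} (survival implications compose), and it is \emph{monotone under inclusion} (if $A\subseteq B$ then $\mathcal S(A)\subseteq\mathcal S(B)$, so $A\Rightarrow B$). Throughout I write $U_I:=\bigcup_{i\in I}A_i$ and $U_J:=\bigcup_{j\in J}A_j$, both of which lie in $\Sigma(\mathcal A)$. With this setup the equivalence $(ii)\Leftrightarrow(iii)$ is immediate: it is exactly Corollary~\ref{cor:equivalence}(2) applied to the pair $U_I,U_J$, and needs no regularity assumptions.

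Next I would prove $(i)\Rightarrow(iii)$ under (C4). By Corollary~\ref{cor:equivalence}(2) it suffices to show $U_I\Leftrightarrow U_J$, and by symmetry I argue only $U_I\Rightarrow U_J$. For each $i\in I$, condition~(i) of Definition~\ref{def:equivalence} gives $j\in J$ with $A_i\Rightarrow A_j$; since $A_j\subseteq U_J$, transitivity yields $A_i\Rightarrow U_J$, so every $i\in I$ lies in $I_{U_J}:=\{i:A_i\Rightarrow U_J\}$. Applying (C4) with $A=U_J$ gives $\bigcup_{i\in I_{U_J}}A_i\Rightarrow U_J$, and since $U_I\subseteq\bigcup_{i\in I_{U_J}}A_i$, monotonicity plus transitivity give $U_I\Rightarrow U_J$; the reverse uses condition~(ii) identically. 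For the cardinality bound, under (C1) the map $I\mapsto U_I$ is surjective onto $\Sigma(\mathcal A)$ (as recorded after Definition~\ref{def:regular}), so $\text{Ext}(\mathcal A)=\{\bm q(U_I):I\subseteq K_{\mathcal A}\}$; the implication just proved makes $[I]\mapsto\bm q(U_I)$ a well-defined surjection from $2^{K_{\mathcal A}}/_\sim$ onto $\text{Ext}(\mathcal A)$, whence $|\text{Ext}(\mathcal A)|\leq|2^{K_{\mathcal A}}/_\sim|$.

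For $(iii)\Rightarrow(i)$ under (C2) and (C5), assume (iii); Corollary~\ref{cor:equivalence}(2) gives $U_I\Leftrightarrow U_J$. I verify condition~(i) of Definition~\ref{def:equivalence} (condition~(ii) being symmetric), which is vacuous if $I=\emptyset$, so fix $i\in I$; then $A_i\subseteq U_I\Rightarrow U_J$ gives $A_i\Rightarrow U_J$. Suppose for contradiction that $A_i\nRightarrow A_j$ for every $j\in J$, i.e.\ $J\subseteq J_i:=\{j:A_i\nRightarrow A_j\}$. If $J=\emptyset$ then $A_i\Rightarrow U_J=\emptyset$ forces $\bm q(A_i)\geq\bm q(\emptyset)=\bm 1$, contradicting (C2); if $J\neq\emptyset$ then $J_i\neq\emptyset$, so (C5) yields $A_i\nRightarrow\bigcup_{j\in J_i}A_j$, whereas $U_J\subseteq\bigcup_{j\in J_i}A_j$ together with $A_i\Rightarrow U_J$ and transitivity forces $A_i\Rightarrow\bigcup_{j\in J_i}A_j$, again a contradiction. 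Hence some $j\in J$ satisfies $A_i\Rightarrow A_j$, establishing $I\sim J$. For the matching bound, I select one representative from each $\sim$-class: distinct representatives $I,J$ are $\not\sim$, so by the contrapositive of $(iii)\Rightarrow(i)$ they satisfy $\bm q(U_I)\neq\bm q(U_J)$, making $I\mapsto\bm q(U_I)$ an injection into $\text{Ext}(\mathcal A)$ and giving $|\text{Ext}(\mathcal A)|\geq|2^{K_{\mathcal A}}/_\sim|$.

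The main obstacle, and the reason (C4) and (C5) are imposed, is that survival in a union $U_I$ need \emph{not} imply survival in any single component $A_i$, so the relation between unions cannot be reduced to the componentwise relations by soft arguments alone. Conditions (C4) and (C5) are precisely the hypotheses that let me pass between a union and its constituents at the two places where decomposition is needed: (C4) collapses all sets implying a target into one union that still implies it, and (C5) replaces the union $\bigcup_{j\in J_i}A_j$ by a single witness $A_j$. The delicate points are slotting these two conditions in at exactly the right steps and handling the degenerate empty-index case through (C2); everything else is routine bookkeeping with transitivity, monotonicity, and Corollary~\ref{cor:equivalence}.
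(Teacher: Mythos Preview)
Your proof is correct and follows essentially the same route as the paper: both use Corollary~\ref{cor:equivalence} for $(ii)\Leftrightarrow(iii)$, push each $A_i$ into $U_J$ via monotonicity and then invoke (C4) to aggregate for $(i)\Rightarrow(ii)$, and argue $(ii)\Rightarrow(i)$ by contradiction through (C5) with (C2) handling the empty case. Your write-up is in fact slightly more careful than the paper's in one spot---you apply (C4) to the full set $I_{U_J}$ and then use $U_I\subseteq\bigcup_{i\in I_{U_J}}A_i$, whereas the paper jumps directly to $U_I\Rightarrow U_J$---but the underlying argument is identical.
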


%


\subsection{Primitive subsets and ascending chains}
\label{subsec:primitive}

In order to characterize $|\text{Ext}(\mathcal{A})|$, the next step is to better understand the structure of the equivalence classes. To help visualise these classes,
we associate a directed graph $G_{\mathcal{A}} = (K_{\mathcal{A}}, E_{\mathcal{A}})$, with 
edge set $E_{\mathcal{A}}:=\{ (i,j)\in K_{\mathcal{A}}^2 \colon A_i \Rightarrow A_j \}$,
to a given MGWBP and family $\mathcal{A}=\{ A_1,A_2, \dots, A_{\kappa_{\mathcal{A}}} \}$. 
Observe that
\begin{itemize}
\item[(P1)]  $(i,j),(j,k) \in {E}_{\mathcal{A}}$ implies  $(i,k) \in {E}_{\mathcal{A}}$ (by transitivity of the relation $\Rightarrow $),
\item[(P2)] $(i,i) \in E_\mathcal{A}$ for all $i \in K_\mathcal{A}$,
\end{itemize}
and, under the regularity condition (C3),
\begin{itemize}
\item[(P3)] $G_{\mathcal{A}}$ contains no cycles (of length greater than one).
\end{itemize}
Note that in $G_{\mathcal{A}}$, there is a path from $i$ to $j$ if and only if $(i,j) \in E_\mathcal{A}$. The next lemma states that,
given a directed graph $(X, E_X)$ satisfying (P1) and (P3),
there exist an MGWBP and a regular family $\mathcal{A}$ such that $G_{\mathcal{A}}=(X, E_X).$
\begin{lemma}\label{lem:canonic}
 Let $(Z,E_Z)$ be a directed graph where 
 \begin{itemize}
 \item $Z$ is at most countable, 
 \item 
there are no cycles (closed paths).
\end{itemize}
 Then there exists an MGWBP and a regular family $\mathcal{A}=\{A_i\}_{i \in Z}$ such that $A_i \Rightarrow A_j$ if and only if 
there is a path from $i$ to $j$ in $(Z,E_Z)$.
\end{lemma}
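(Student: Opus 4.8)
The plan is to construct an MGWBP explicitly from the combinatorial data of the graph $(Z,E_Z)$, and then to engineer the reproduction laws so that the survival-implication relation $\Rightarrow$ on a suitably chosen family $\mathcal{A}$ reproduces exactly the reachability relation of $(Z,E_Z)$. Since $(Z,E_Z)$ is acyclic and at most countable, I would first replace it by its transitive closure $(Z,\bar E_Z)$, where $(i,j)\in\bar E_Z$ iff there is a path from $i$ to $j$; this is harmless because the lemma only asks that $A_i\Rightarrow A_j$ be equivalent to reachability, and it lets me build a graph satisfying (P1) directly. The key design idea is to use one ``survival gadget'' per vertex $i\in Z$: a type (or block of types) whose associated subprocess survives on its own with positive probability, and to arrange the offspring so that survival of gadget $i$ forces, with probability one, eventual survival of gadget $j$ precisely when $(i,j)\in\bar E_Z$.

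Concretely, I would let the typeset $\mathcal{X}$ contain, for each $i\in Z$, a ``supercritical seed'' type $\sigma_i$ whose offspring distribution (restricted to its own irreducible class) is supercritical, so that starting from $\sigma_i$ there is a positive probability of indefinite survival within the copy indexed by $i$. I would then set $A_i$ to be the irreducible class (or the relevant recurrent core) associated with the seed $\sigma_i$; condition (C1) is secured by making these classes disjoint across distinct $i$. To encode edges, I would give each individual of type $\sigma_i$ (or of the types in $A_i$) a guaranteed offspring that lands in $A_j$ for every $j$ with $(i,j)\in\bar E_Z$ — for instance a deterministic child of type $\sigma_j$ at every reproduction event — while supercriticality of $A_i$ ensures the process survives in $A_i$ itself with positive probability. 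The transitivity (P1) of $\bar E_Z$ is what makes ``survival in $A_i$ forces survival in $A_j$'' consistent: survival in $A_i$ produces an immortal line in $A_i$, each generation of which seeds $A_j$, so $A_j$ survives almost surely given survival in $A_i$; hence $A_i\Rightarrow A_j$. Conversely, when $(i,j)\notin\bar E_Z$, I must ensure no forced seeding of $A_j$ from $A_i$ occurs along any survival path, so that starting from some type one can survive in $A_i$ while going extinct in (indeed never visiting) $A_j$; by the equivalence of $(iv)$ and $(i)$ in Theorem~\ref{th:1}, this yields $A_i\nRightarrow A_j$.

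Once the pairwise relations $A_i\Rightarrow A_j\iff (i,j)\in\bar E_Z$ are established, I would verify regularity. Condition (C1) is the disjointness built into the construction. Condition (C2), namely $\bm q(A_i)<\bm 1$, follows from the supercriticality of each seed: from $\sigma_i$ there is a positive probability of surviving in $A_i$, so $q_{\sigma_i}(A_i)<1$. Condition (C3) is exactly the acyclicity hypothesis: if $A_i\Leftrightarrow A_j$ with $i\neq j$ then $(i,j)$ and $(j,i)$ would both lie in $\bar E_Z$, forcing a cycle through $i$ and $j$ in $(Z,E_Z)$, contradicting the no-cycles assumption. Conditions (C4) and (C5) should reduce to properties of reachability in $\bar E_Z$ combined with the monotone/countable-union behaviour of survival events: for (C5), if $A_i\nRightarrow A_j$ for each $j$ in some index set $J_i$, I would argue that the single survival-in-$A_i$-without-visiting-$\bigcup_{j\in J_i}A_j$ scenario persists, using that the construction seeds only the reachable classes so the union over non-reachable $j$ is still unreachable from $i$; (C4) is the dual statement for the union of classes that do reach a given $A$.

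The step I expect to be the main obstacle is the converse direction $(i,j)\notin\bar E_Z\Rightarrow A_i\nRightarrow A_j$ together with the verification of (C4) and (C5) for \emph{infinite} families. Forcing $A_i\Rightarrow A_j$ whenever there is an edge is the easy, constructive half; the delicate part is guaranteeing that survival in $A_i$ does \emph{not} secretly force survival in some non-reachable $A_j$ via long composite paths through the modified process, and that the countable unions appearing in (C4)/(C5) behave well. To handle this cleanly I would keep the coupling between classes strictly ``one-directional along $\bar E_Z$'': individuals in $A_i$ may seed reachable classes but individuals in a non-reachable class never feed back into $A_i$, so that a survival path confined to the reachable cone of $i$ genuinely avoids $A_j$ whenever $j$ is not reachable. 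Establishing (C5) then amounts to showing that from $\sigma_i$ one can survive in $A_i$ while permanently avoiding the (possibly infinite) union of all non-reachable target classes, which I would obtain by a positive-probability confinement argument restricting the process to the reachable cone of $i$ and invoking the equivalence $(i)\Leftrightarrow(iv)$ of Theorem~\ref{th:1} to convert this survival-and-avoidance statement back into the relation $\nRightarrow$.
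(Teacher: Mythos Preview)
Your overall strategy---attach a supercritical mechanism to each vertex and seed along edges so that survival implications mirror reachability---is the same idea the paper uses, but your implementation is more elaborate than necessary and has a concrete flaw. The paper takes the typeset to be $Z$ itself and sets $A_i:=\{i\}$; each particle at $i$ produces a random number of children with a single generating function $\phi$ satisfying $\phi'(1)>2$, and each child is placed independently according to a distribution $r_i$ with $r_{ii}=1/2$ and $r_{ij}>0$ iff $(i,j)\in E_Z$. This avoids multi-type gadgets entirely and, crucially, it handles vertices of infinite out-degree without difficulty, since the $r_{ij}$ form a probability distribution. Your ``deterministic child of type $\sigma_j$ for every $j$ with $(i,j)\in\bar E_Z$'' breaks the finite-offspring requirement of the MGWBP whenever a vertex has infinitely many reachable successors; you would need to replace deterministic seeding by random placement anyway.

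The more serious gap is your treatment of (C4). You say it ``should reduce to properties of reachability \dots\ combined with the monotone/countable-union behaviour of survival events,'' but this is where the real content lies. In the paper's construction the key point is that the local survival probability is \emph{the same} positive number $1-\beta$ at every vertex (because $r_{ii}=1/2$ and $\phi$ do not depend on $i$, and acyclicity means children placed off $i$ never return). Then survival in $\bigcup_{i\in I}A_i$ forces infinitely many particles to appear in that union, and a Borel--Cantelli argument using the uniform bound $1-\beta$ shows that almost surely one of them survives locally, hence in some $A_i$ with $i\in I$; applying this to $I=I_A$ gives (C4). Your gadget construction does not obviously yield a uniform lower bound on local survival across all $A_i$, and without such uniformity the countable-union step can fail. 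Condition (C5), by contrast, is easy in both approaches for the reason you give: if $j\in J_i$ then no descendant of $i$ ever enters $A_j$, so survival in $A_i$ (which has positive probability) occurs without visiting $\bigcup_{j\in J_i}A_j$.
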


For any subset $I \subseteq K_{\mathcal{A}}$, we define the subgraph induced in $G_{\mathcal{A}}$ by $I$ as $$G_{\mathcal{A}}[I] := (I, E_{\mathcal{A}}[I]),\quad \textrm{with $E_{\mathcal{A}}[I] := \{ (i,j) \in I^2 \colon A_i \Rightarrow A_j \}$}.$$ 

\begin{definition}\label{def:maximal}
We call $I \subseteq K_\mathcal{A}$ \emph{primitive} if
 for all $i,j \in I$, $i \neq j$, we have $A_i \nLeftrightarrow A_j$. Equivalently, a subset $I$ is primitive if the induced subgraph $G_{\mathcal{A}}[I]$ is edgeless. We write $\textswab{P}_\mathcal{A}$ for the set of primitive subsets of $K_\mathcal{A}$.
 
 \end{definition}
 
The following properties are straightforward:
 \begin{itemize}
 \item $I:=\emptyset$ is primitive and, if (C2) holds, it is the only subset of $K_\mathcal{A}$ such that 
 $\bm{q} \big (\bigcup_{i \in I} A_i \big )=\bm{1}$;
  \item every singleton $\{i\}$ is primitive.
  \item every subset of a primitive subset is primitive;
  \item if $\{I_n\}_n$ is a sequence of primitive subsets  of $K_\mathcal{A}$ such that $I_n \subseteq I_{n+1}$ (for all $n$) then $\bigcup_{n} I_n$ is primitive.
 \end{itemize}


From the definition of $\sim$, if (C3) holds, then the equivalence class of a primitive subset $I$ is
 \begin{equation}\label{eq:primitive}
  [I]_\sim=
  \Big\{J \subseteq K_\mathcal{A}\colon J \supseteq I, \forall j \in J,\, \exists
  i \in I, \,A_j \Rightarrow A_i\Big\}.
 \end{equation}
 %
 In particular, given two primitive subsets
 $I_1 \neq I_2$ we have 
 $[I_1]_\sim
 \neq
 [I_2]_\sim$. Hence $\textswab{P}_\mathcal{A}$ can be identified with a (possibly proper) subset of $2^{K_\mathcal{A}}/_\sim$. This directly leads us to the next result about the map
$$f_\mathcal{A}:\textswab{P}_\mathcal{A}\to 2^{K_\mathcal{A}}/_\sim\quad \textrm{s.t.}\quad f_\mathcal{A}(I)=[I]_\sim.$$
 
  \begin{lemma}\label{lem:primitive}
 If $\mathcal{A}$ satisfies (C3) then  $f_\mathcal{A}$
 is injective; in particular 
 $|\textswab{P}_\mathcal{A}| \leq |2^{K_\mathcal{A}}/_\sim|$. 
 \end{lemma}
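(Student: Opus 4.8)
The plan is to show that every primitive subset $I$ is the unique inclusion-minimal element of its own equivalence class, so that $I$ can be recovered from $[I]_\sim$; injectivity of $f_\mathcal{A}$ then follows at once, and the cardinality bound $|\textswab{P}_\mathcal{A}| \leq |2^{K_\mathcal{A}}/_\sim|$ is just the existence of an injection. Since the lemma assumes (C3), I would use the explicit description \eqref{eq:primitive} of the equivalence class of a primitive subset: for primitive $I$,
\[
[I]_\sim = \Big\{ J \subseteq K_\mathcal{A} : J \supseteq I,\ \forall j \in J,\ \exists i \in I,\ A_j \Rightarrow A_i \Big\}.
\]

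The key observation I would record from this description is twofold. First, $\sim$ is reflexive, so $I \sim I$ and hence $I \in [I]_\sim$. Second, by \eqref{eq:primitive} every member $J$ of $[I]_\sim$ satisfies $J \supseteq I$. Together these say precisely that $I$ is the least element of $[I]_\sim$ with respect to inclusion; equivalently, $I = \bigcap_{J \in [I]_\sim} J$. Thus the set $I$ is a function of the class $[I]_\sim$ alone.

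With this in hand injectivity is immediate: if $I_1, I_2 \in \textswab{P}_\mathcal{A}$ satisfy $f_\mathcal{A}(I_1) = f_\mathcal{A}(I_2)$, i.e. $[I_1]_\sim = [I_2]_\sim$, then intersecting the two equal classes gives $I_1 = \bigcap_{J \in [I_1]_\sim} J = \bigcap_{J \in [I_2]_\sim} J = I_2$. Hence $f_\mathcal{A}$ is injective, and an injection $\textswab{P}_\mathcal{A} \to 2^{K_\mathcal{A}}/_\sim$ yields $|\textswab{P}_\mathcal{A}| \leq |2^{K_\mathcal{A}}/_\sim|$.

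The only genuinely delicate point is the justification of \eqref{eq:primitive}, and in particular the step forcing $J \supseteq I$; this is where both the antichain property of primitive subsets and the antisymmetry supplied by (C3) enter. Concretely, for $i \in I$, conditions \textit{(i)}--\textit{(ii)} of Definition~\ref{def:equivalence} produce $j \in J$ and $i' \in I$ with $A_i \Rightarrow A_j \Rightarrow A_{i'}$ (using transitivity (P1)); the antichain property of $I$ forces $i = i'$, whence $A_i \Leftrightarrow A_j$, and then (C3) forces $j = i$, so $i \in J$. Once this class formula is available, the remainder of the argument is purely set-theoretic and presents no obstacle.
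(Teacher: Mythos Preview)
Your proof is correct and follows essentially the same route as the paper: the argument in the paper is given in the text immediately preceding the lemma, where the class formula \eqref{eq:primitive} is stated and it is observed that distinct primitive subsets $I_1\neq I_2$ yield distinct classes $[I_1]_\sim\neq[I_2]_\sim$. Your write-up is a fleshed-out version of this, phrasing the consequence of \eqref{eq:primitive} as ``$I$ is the inclusion-minimum of $[I]_\sim$'' and, helpfully, supplying the verification (left implicit in the paper) of the inclusion $J\supseteq I$ via primitivity and (C3).
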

 
 We will see that in many situations, the injective map $f_\mathcal{A}$ is actually bijective, in which case, if $\mathcal{A}$ is regular, then by Theorem \ref{th:equivalence} there is a one-to-one correspondence between the distinct extinction probability vectors in Ext$(\mathcal{A})$ and the primitive subsets. We now present two illustrative examples: in Figure  \ref{pic1}, $f_\mathcal{A}$ is bijective, and in  Figure  \ref{pic2}, $f_\mathcal{A}$ is not surjective because no primitive subset belongs to the equivalence class of $I=\{3,4,5,\ldots\}$.


\begin{figure}
\begin{floatrow}
\ffigbox{\hspace{-2.5cm}%
  \begin{tikzpicture}[scale=0.80]

\tikzset{vertex/.style = {shape=circle,draw,minimum size=1.3em}}
\tikzset{edge/.style = {->,> = latex'}}

\node at (2,4.2) {Relations: };
\node at (2,3.3) {$A_1 \Rightarrow A_3$, $A_2 \Rightarrow A_1$};
\node at (2,2.4) {$A_2 \Rightarrow A_3$, $A_2 \Rightarrow A_4$};


\node at (2,1.2) {$G_{\mathcal{A}}$:};

\node[vertex, minimum size=.9cm] (1) at  (0,0) {\footnotesize $1$};
\node[vertex, minimum size=.9cm] (2) at  (4,0) {\footnotesize $3$};
\node[vertex, minimum size=.9cm] (3) at  (0,-4) {\footnotesize $2$};
\node[vertex, minimum size=.9cm] (4) at  (4,-4) {\footnotesize $4$};

\draw[edge, line width=1.5pt] (1) to[bend left=0] (2);
\draw[edge, line width=1.5pt] (3) to[bend left=0] (1);
\draw[edge, line width=0.5pt] (3) to[bend left=0] (2);
\draw[edge, line width=1.5pt] (3) to[bend left=0] (4);

\end{tikzpicture}%
}{%
}
\capbtabbox{\hspace{-2cm}\vspace{0.0cm}%
 \begin{tabular}{ p{1.1cm}||p{6.5cm} }
$\textswab{P}_\mathcal{A}$    & Ext$(\mathcal{A})$ \\[+2mm]
 \hline
$\emptyset$  & $\bm{1}$    \\[+3.5mm]
$\{1\}$  &   $\bm{q}(A_1)=\bm{q}(A_1 \cup A_2)$  \\[+3.5mm]
$\{2\}$ & $\bm{q}(A_2)$ \\[+3.5mm]
$\{3\}$    & $\bm{q}(A_3)=\bm{q}(A_1 \cup A_3)=\bm{q}(A_2 \cup A_3)$\\&$\phantom{\bm{q}(A_3)}=\bm{q}(A_1 \cup A_2 \cup A_3)$\\[+3.5mm]
$\{4\}$ &  $\bm{q}(A_4)=\bm{q}(A_2 \cup A_4)$  \\[+3.5mm]
$\{1,4\}$ & $\bm{q}(A_1 \cup A_4)=\bm{q}(A_1 \cup A_2 \cup A_4)$   \\[+3.5mm]
$\{3,4\}$ & $\bm{q}(A_3 \cup A_4)=\bm{q}(A_2 \cup A_3 \cup A_4)$\\&$\phantom{\bm{q}(A_3 \cup A_4)}= \bm{q}(A_1 \cup A_2 \cup A_3 \cup A_4)$ 
\end{tabular}
}{%
}
\end{floatrow}
\end{figure}
\begin{figure}
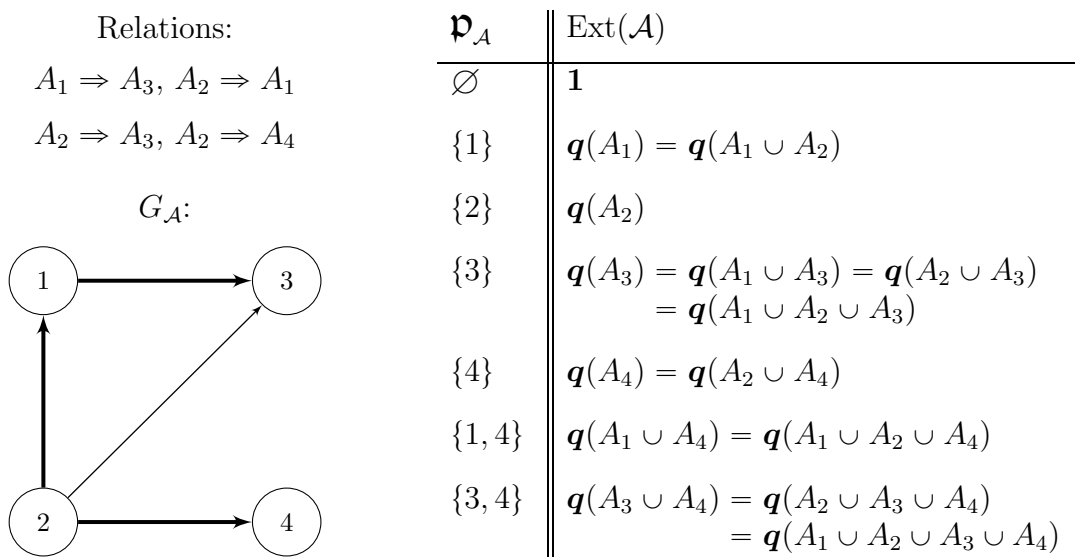
 \caption{\label{pic1}A regular family $\mathcal{A}=\{A_1,A_2,A_3,A_4\}$ with its associated directed graph $G_{\mathcal{A}}$, the set of primitive subsets $\textswab{P}_\mathcal{A}$, and the elements in Ext$(\mathcal{A})$. There is a one-to-one correspondence between the primitive subsets and the distinct elements in Ext$(\mathcal{A})$.}
\end{figure}

 \begin{figure}[h!]
 \begin{tikzpicture}[scale=0.70]

\tikzset{vertex/.style = {shape=circle,draw,minimum size=1.3em}}
\tikzset{edge/.style = {->,> = latex'}}




\node[vertex, minimum size=.9cm] (2) at  (0,0) {\footnotesize $2$};
\node[vertex, minimum size=.9cm] (1) at  (4,0) {\footnotesize $1$};
\node[vertex, minimum size=.9cm] (3) at  (0,-4) {\footnotesize $3$};
\node[vertex, minimum size=.9cm] (4) at  (4,-4) {\footnotesize $4$};
\node[vertex, minimum size=.9cm] (5) at  (8,-4) {\footnotesize $5$};
\node[vertex, minimum size=.9cm] (6) at  (12,-4) {\footnotesize $6$};
\node[vertex, minimum size=.9cm] (7) at  (16,-4) {\footnotesize $7$};

\node (8) at  (18,-4) { };

\node at (18.5,-4) {$\dots$ };

\draw[edge, line width=1.5pt] (2) to[bend left=0] (1);
\draw[edge, line width=0.5pt] (3) to[bend left=0] (1);
\draw[edge, line width=1.5pt] (3) to[bend left=0] (2);

\draw[edge, line width=1.5pt] (3) to[bend left=0] (4);
\draw[edge, line width=1.5pt] (4) to[bend left=0] (5);
\draw[edge, line width=1.5pt] (5) to[bend left=0] (6);

\draw[edge, line width=0.5pt] (3) to[bend left=20] (5);
\draw[edge, line width=0.5pt] (3) to[bend left=30] (6);

\draw[edge, line width=0.5pt] (4) to[bend left=20] (6);

\draw[edge, line width=0.5pt] (3) to[bend left=35] (7);
\draw[edge, line width=0.5pt] (4) to[bend left=30] (7);
\draw[edge, line width=0.5pt] (5) to[bend left=20] (7);
\draw[edge, line width=1.5pt] (6) to[bend left=0] (7);

\draw[edge, line width=1.5pt] (7) to[bend left=0] (8);

\end{tikzpicture}%
\caption{\label{pic2}The directed graph $G_{\mathcal{A}}$ of a regular family $\mathcal{A}=\{A_1,A_2,A_3,\ldots\}$ with an ascending chain. The set of primitive subsets is $\textswab{P}_\mathcal{A}=\{\emptyset,\{i\}_{i=1,2,3,\dots},\{1,j\}_{j=4,5,6,\dots},\{2,j\}_{j=4,5,6,\dots}\},$ and the set of representatives of pure ascending chains is $\textswab{C}_\mathcal{A}=\{\emptyset,\{3,4,5,\dots\}\}$. }
\end{figure}
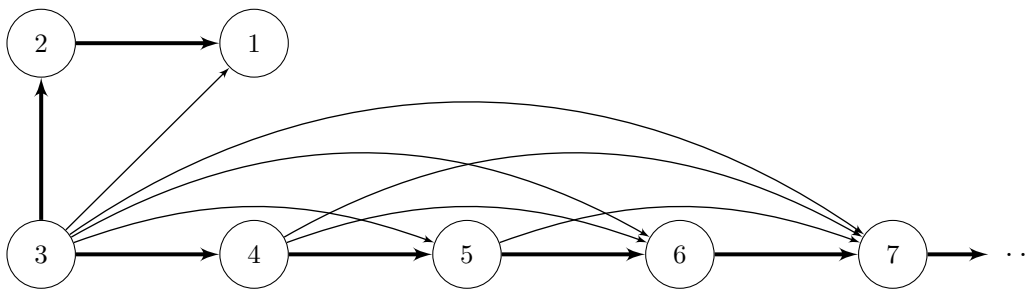

  In order for the map $f_\mathcal{A}$ to be bijective in general, the domain $\textswab{P}_\mathcal{A}$ of $f_\mathcal{A}$ needs to be extended. 
 To understand how to extend $\textswab{P}_\mathcal{A}$, we need a more complete description of the codomain $2^{K_\mathcal{A}}/_\sim$ of  $f_\mathcal{A}$. We consider the following subsets of every $I \subseteq K_{ \mathcal{A}}$:
\[
 \begin{cases}
 I_M:=\{i \in I\colon \forall j \in I, \, j \neq i, \, A_i \nRightarrow A_j \}\\
  I_d:=\{i \in I \colon \exists j \in I_M,\, A_i \Rightarrow A_j\}\\
  I_c:=I \setminus I_d.
 \end{cases}
\]
Roughly speaking,
 $I_M$ contains the vertices with out-degree zero in $G_{\mathcal{A}}[I]$, and  
$I_d$ is the largest subset of $I$ equivalent to $I_M$ (clearly, $I_M \subseteq I_d$, since $A_j \Rightarrow A_j$ for every $j \in K_\mathcal{A}$).  If we think of ``$\Rightarrow$'' as a partial preorder relation ``$\le$'' (it is a partial order relation if (C3) holds), then $I_M$ can be interpreted as the primitive subset of maximal elements of $I$, and $I_d$ as the subset of elements which are smaller than a maximal element.
Finally, $I_c$ is the subset of elements which are not comparable with any maximal element of $I$; in particular,
$$I_c=\{i \in I \colon \not \exists j \in I_M, \, A_i \Rightarrow A_j\}
\equiv \{i \in I \colon \forall j \in I_M, \, A_i \not \Leftrightarrow A_j\}
\equiv \{i \in I \colon \not \exists j \in I_d, \, A_i \Rightarrow A_j\}.$$
As an example,  let $I=K_\mathcal{A}=\mathbb{N}$ for the family $\mathcal{A}$ considered in  Figure \ref{pic2}; then $I_M=\{1\}$, $I_d=\{1,2,3\}$, and $I_c=\{4,5,6,\ldots\}$.

Clearly $I$ is primitive if and only if $I=I_M$; moreover $[I_M]_\sim=[I_d]_\sim$, and if $I \neq \emptyset$ then $[I_d]_\sim \neq [I_c]_\sim$. 
  The next lemma states several other properties of the subsets $I_M, I_d,$ and $I_c$; in particular it extends the representation of the equivalence class of a primitive subset given in~\eqref{eq:primitive} to that of a generic subset (Lemma \ref{lem:IM}\emph{(vii)}).

\begin{lemma}\label{lem:IM}Let $I,J\subseteq K_\mathcal{A}$. \begin{enumerate}[(i)]
\item $ I \sim I_M \Leftrightarrow I_c=\emptyset$;
\item $I_M\sim J_M\Leftrightarrow I_d \sim J_d$;
\end{enumerate}
Suppose that (C3) holds.
\begin{enumerate}[(i)] \setcounter{enumi}{2}
\item  $I_M=J_M \Leftrightarrow I_d \sim J_d$; 
\item $I \sim J \Leftrightarrow \ I_M=J_M $ and $I_c \sim J_c$;
\item $I\sim J$ for some primitive $J$ if and only if $I_c=\emptyset$;
\item if $I_c\neq \emptyset$ then $I_c$ is infinite;
\item 
 $
  [I]_\sim=\{H \cup W\colon H,W \subseteq K_{\mathcal{A}},\, H_d \sim I_d, \,W_c \sim I_c,\, H_c=W_d=\emptyset\}$.
\end{enumerate}
\end{lemma}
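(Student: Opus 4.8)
The plan is to reduce every part to chasing Definition~\ref{def:equivalence}, after first recording a few structural facts about $I_M,I_d,I_c$. I would begin by noting that $\sim$ is an equivalence relation (reflexivity and symmetry are clear, transitivity follows from the transitivity (P1) of $\Rightarrow$), that $I=I_d\sqcup I_c$, that $I_M\subseteq I_d$ since $A_i\Rightarrow A_i$ by (P2), and that $\nRightarrow$ is exactly the negation of $\Rightarrow$. The two facts I would prove first and use repeatedly are: \emph{(a)} $I_d\sim I_M$ for every $I$ --- condition \emph{(i)} of Definition~\ref{def:equivalence} is precisely the defining property of $I_d$, while condition \emph{(ii)} holds by taking $i=j$; and \emph{(b)} $(I_d)_M=I_M$, so $(I_d)_c=\emptyset$, while $(I_c)_M=\emptyset$, so $(I_c)_d=\emptyset$. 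For \emph{(b)} the key observation is that an element $i\in I_c$ is never maximal even within $I_c$: as $i\notin I_M$ there is $i'\neq i$ in $I$ with $A_i\Rightarrow A_{i'}$, and transitivity forces $i'\in I_c$; this also drives the descent used in \emph{(vi)}.

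With these in hand the first three parts are short. For \emph{(i)}, unwinding Definition~\ref{def:equivalence} for the pair $(I,I_M)$ shows condition \emph{(ii)} is automatic while condition \emph{(i)} says exactly $I=I_d$, i.e. $I_c=\emptyset$. Part \emph{(ii)} is then immediate from fact \emph{(a)} and transitivity, with no use of (C3). For \emph{(iii)} I would first observe that $I_M$ is \emph{primitive}, since distinct elements of $I_M$ satisfy $\nRightarrow$ both ways and hence $\nLeftrightarrow$; chasing $I_M\sim J_M$ then yields, for each $i\in I_M$, some $j\in J_M$ and $i'\in I_M$ with $A_i\Rightarrow A_j\Rightarrow A_{i'}$, and maximality of $i$ forces $i'=i$, so $A_i\Leftrightarrow A_j$, whence (C3) gives $i=j\in J_M$. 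This proves $I_M=J_M$; the converse is trivial, and the equivalence with $I_d\sim J_d$ follows via \emph{(ii)}.

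For \emph{(iv)}, in the forward direction I would first establish $I_M=J_M$ by the same maximality-plus-(C3) argument applied to the two conditions of $I\sim J$ (one application places $i\in I_M$ inside $J$, a second shows it is undominated there, so $i\in J_M$), and then prove $I_c\sim J_c$ by noting that any $j\in J$ witnessing $A_i\Rightarrow A_j$ for $i\in I_c$ cannot lie in $J_d$, lest $A_i$ reach a maximal element. The converse recombines $I=I_d\sqcup I_c$ and $J=J_d\sqcup J_c$, using $I_d\sim I_M=J_M\sim J_d$ together with $I_c\sim J_c$ to read off the four matching conditions directly. Part \emph{(v)} follows from \emph{(iv)} and \emph{(i)}: if $I_c=\emptyset$ then $I\sim I_M$ with $I_M$ primitive, and conversely a primitive $J\sim I$ has $J_c=\emptyset$, forcing $I_c\sim\emptyset$ and hence $I_c=\emptyset$.

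I expect \emph{(vi)} to be the crux. Assuming $I_c\neq\emptyset$, I would build an injective sequence $i_0,i_1,i_2,\dots$ in $I_c$ by iterating fact \emph{(b)}: each $i_n\in I_c$ is non-maximal in $I_c$, so there is $i_{n+1}\in I_c$ with $i_{n+1}\neq i_n$ and $A_{i_n}\Rightarrow A_{i_{n+1}}$. The obstacle is \emph{distinctness}: by transitivity $A_{i_m}\Rightarrow A_{i_n}$ for $m<n$, so a repetition would create a directed cycle of length at least two in $G_\mathcal{A}$, which (P3) excludes under (C3); hence $I_c$ is infinite. Finally, for \emph{(vii)}, parts \emph{(iii)} and \emph{(iv)} give $L\in[I]_\sim$ iff $L_d\sim I_d$ and $L_c\sim I_c$. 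The inclusion $\subseteq$ uses the canonical split $H:=L_d$, $W:=L_c$, for which fact \emph{(b)} yields $H_c=W_d=\emptyset$, $H_d=L_d\sim I_d$ and $W_c=L_c\sim I_c$. For $\supseteq$, given $H,W$ with $H_c=W_d=\emptyset$ (so $H=H_d$, $W=W_c$), $H_d\sim I_d$ and $W_c\sim I_c$, I would verify $H\cup W\sim I$ straight from Definition~\ref{def:equivalence}: elements of $H$ reach $I_d$ and elements of $W$ reach $I_c$, while each element of $I_d$ is reached from $H$ and each of $I_c$ from $W$.
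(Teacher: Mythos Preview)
Your proposal is correct and follows essentially the same approach as the paper's proof: both reduce everything to Definition~\ref{def:equivalence} after recording the basic structural identities $I_d\sim I_M$, $(I_d)_M=I_M$, $(I_d)_c=\emptyset$, $(I_c)_M=(I_c)_d=\emptyset$, and both prove \emph{(vi)} by the same ascending-chain construction with (C3) ruling out repetitions. Your write-up is in fact slightly more explicit in isolating facts \emph{(a)} and \emph{(b)} upfront, and your argument for \emph{(vii)} is more complete than the paper's, which trails off mid-sentence in the converse direction.
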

%
%
Any infinite sequence $\{i_n\}_{n\geq 0}$ of distinct elements of $K_\mathcal{A}$ such that $A_{i_n} \Rightarrow A_{i_{n+1}}$ will be called an \emph{ascending chain}. Under (C3),  if $I_c$ is non-empty then every element in $I_c$ belongs to an ascending chain (see the proof of Lemma~\ref{lem:IM}\emph{(vi)}).
Any $I\subseteq K_\mathcal{A}$ such that $I=I_c$ (that is, $I_d=I_M=\emptyset$) will be called a \emph{pure} ascending chain.
From Lemma~\ref{lem:IM}\textit{(vii)}, any subset $J$ equivalent to a pure ascending chain is also a pure ascending chain (that is, if $I=I_c$ and $J \sim I$, then $J=J_c$).

Given two equivalent subsets 
$I$ and $J$, observe that
$$\{i\in K_\mathcal{A}: \exists j\in I, A_i\Rightarrow A_j\}= \{i\in K_\mathcal{A}: \exists j\in J,A_i\Rightarrow A_j\}.$$
The largest subset equivalent to $I$, defined as $I^+:=\{i\in K_\mathcal{A}: \exists j\in I, A_i\Rightarrow A_j\}$, is a natural representative of the equivalence class $[I]_\sim$. We let \label{CA}
$$\textswab{C}_\mathcal{A}:=\{I\subseteq K_\mathcal{A}: \exists J=J_c, J^+=I\}$$ be the set of representatives of pure ascending chains; note that $\textswab{C}_\mathcal{A}$ is non-empty since $\emptyset \in \textswab{C}_\mathcal{A}$. Moreover $J \in \textswab{C}_\mathcal{A}$ if and only if $J=J_c$ and $J=J^+$.

Recall that the domain of $f_\mathcal{A}$ is $\textswab{P}_\mathcal{A}$, which is non-empty (since $\emptyset $ is primitive), and that, by Lemma~\ref{lem:primitive}, $f_\mathcal{A}$ is injective (under (C3)).
The following proposition implies that $\textswab{P}_\mathcal{A}$ can be extended by means of $\textswab{C}_\mathcal{A}$ to the set
\begin{equation}\label{eq:Ia}
\textswab{I}_\mathcal{A}:
=
\{(I, J) \in \left(\textswab{P}_\mathcal{A} \times \textswab{C}_\mathcal{A}\right)
\colon  
 I\cap J=\emptyset, \,(J\setminus I^+)^+=J
\}.
\end{equation}
Clearly
$\{\emptyset\} \times \textswab{C}_\mathcal{A}$ and $\textswab{P}_\mathcal{A} \times \{\emptyset\}$ are subsets of $\textswab{I}_\mathcal{A}$; in particular $(\emptyset,\emptyset) \in \textswab{I}_\mathcal{A}$.
We define the map
$$g_\mathcal{A}:2^{K_\mathcal{A}}/_\sim\to\textswab{I}_\mathcal{A}\quad \textrm{s.t.}\quad g_\mathcal{A}([I]_\sim)=(I_M, (I_c)^+).$$



\begin{proposition}\label{pro:cardinality}
 If $\mathcal{A}$ satisfies (C3), then $g_\mathcal{A}$ is bijective; in particular,
 \begin{enumerate}[(i)]
 \item $|2^{K_\mathcal{A}}/_\sim|=|\textswab{I}_\mathcal{A}|$,
 \item if there are no ascending chains (i.e.~$\textswab{C}_\mathcal{A}=\{\emptyset\}$), then the map $f_\mathcal{A}$ is bijective, that is,  every equivalence class contains one (unique) primitive subset. 
\end{enumerate}

\end{proposition}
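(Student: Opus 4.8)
The plan is to prove directly that $g_\mathcal{A}$ is a well-defined bijection; parts \emph{(i)} and \emph{(ii)} then follow at once. I would proceed in four steps: first check that $g_\mathcal{A}$ is well defined (independent of the chosen representative and genuinely valued in $\textswab{I}_\mathcal{A}$), then establish injectivity, then surjectivity, and finally read off the two stated consequences. Throughout I would use two elementary facts that follow from transitivity (P1): that $X \sim X^+$ for every $X \subseteq K_\mathcal{A}$, and that $I \sim J$ forces $I^+ = J^+$ (any witness $A_k \Rightarrow A_i$ with $i \in I$ composes with a witness $A_i \Rightarrow A_j$, $j \in J$, to give $A_k \Rightarrow A_j$, and symmetrically).

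Independence of the representative is exactly Lemma~\ref{lem:IM}\emph{(iv)}: if $I \sim J$ then $I_M = J_M$ and $I_c \sim J_c$, and the latter gives $(I_c)^+ = (J_c)^+$ by the remark above, so $g_\mathcal{A}([I]_\sim) = g_\mathcal{A}([J]_\sim)$. To see that $(I_M, (I_c)^+) \in \textswab{I}_\mathcal{A}$ I would verify the defining conditions one at a time. That $I_M$ is primitive is immediate, since two distinct maximal elements of $I$ are incomparable. That $(I_c)^+ \in \textswab{C}_\mathcal{A}$ follows once I check that $I_c$ is itself a pure ascending chain, i.e.\ $(I_c)_M = \emptyset$: an element maximal within $I_c$ would, by transitivity and the definition of $I_c$, be maximal in $I$ and hence lie in $I_M$, contradicting $I_c \cap I_M = \emptyset$; thus $(I_c)^+ \in \textswab{C}_\mathcal{A}$ via the witness $J = I_c$. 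Disjointness $I_M \cap (I_c)^+ = \emptyset$ is a similarly short argument from the maximality of the elements of $I_M$.

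The only real obstacle is the last condition, $\big( (I_c)^+ \setminus (I_M)^+ \big)^+ = (I_c)^+$. Writing $C := (I_c)^+$ and $D := (I_M)^+$, the inclusion $(C \setminus D)^+ \subseteq C^+ = C$ is routine monotonicity together with $C = C^+$. For the reverse inclusion I would argue as follows: given $c \in C$, choose $j \in I_c$ with $A_c \Rightarrow A_j$; by the very definition of $I_c$ the element $j$ is dominated by no element of $I_M$, so $j \notin (I_M)^+ = D$, whereas $j \in I_c \subseteq C$. Hence $j \in C \setminus D$ with $A_c \Rightarrow A_j$, giving $c \in (C \setminus D)^+$. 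This is the key structural point: every element of the chain $(I_c)^+$ already sits above a chain element that avoids $D$, so deleting $D$ does not shrink the $+$-closure.

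Injectivity is again Lemma~\ref{lem:IM}\emph{(iv)}: if $I_M = J_M$ and $(I_c)^+ = (J_c)^+$, then $I_c \sim (I_c)^+ = (J_c)^+ \sim J_c$, whence $I \sim J$. For surjectivity, given $(I, J) \in \textswab{I}_\mathcal{A}$ I would exhibit the explicit preimage $H := I \cup J$ (a disjoint union). Since $I$ is primitive and $J = J^+$ is disjoint from $I$, no element of $I$ has an outgoing edge in $G_\mathcal{A}[H]$, while $J_M = \emptyset$ means no element of $J$ is maximal in $H$; hence $H_M = I$. Consequently $H_d = H \cap I^+$ and $H_c = J \setminus I^+$, so $(H_c)^+ = (J \setminus I^+)^+ = J$ by the defining condition of $\textswab{I}_\mathcal{A}$, giving $g_\mathcal{A}([H]_\sim) = (I, J)$. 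Part \emph{(i)} is then immediate. For \emph{(ii)}, when $\textswab{C}_\mathcal{A} = \{\emptyset\}$ there are no ascending chains, hence $I_c = \emptyset$ for every $I$ (under (C3) any element of a nonempty $I_c$ would lie on an ascending chain); thus $\textswab{I}_\mathcal{A} = \textswab{P}_\mathcal{A} \times \{\emptyset\}$ and $g_\mathcal{A}([I]_\sim) = (I_M, \emptyset)$. Since $I \sim I_M$ by Lemma~\ref{lem:IM}\emph{(i)}, every class equals $[J]_\sim = f_\mathcal{A}(J)$ for the primitive set $J = I_M$, so $f_\mathcal{A}$ is surjective; with its injectivity from Lemma~\ref{lem:primitive}, this makes $f_\mathcal{A}$ a bijection.
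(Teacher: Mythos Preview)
Your proof is correct and follows essentially the same route as the paper's: well-definedness and injectivity via Lemma~\ref{lem:IM}\emph{(iv)}, and surjectivity by exhibiting $I\cup(\text{something equivalent to }J)$ as a preimage of $(I,J)$. The one minor difference is that the paper passes through the alternative description $\textswab{I}_\mathcal{A}=\{(I,J)\in\textswab{P}_\mathcal{A}\times\textswab{C}_\mathcal{A}:\exists\,\bar J\sim J,\ \forall i\in I,\,\forall j\in\bar J,\,A_i\nLeftrightarrow A_j\}$ both to check that the image lies in $\textswab{I}_\mathcal{A}$ (with witness $\bar J=I_c$) and to build the preimage $I\cup\bar J$, whereas you verify the original defining conditions $I\cap J=\emptyset$ and $(J\setminus I^+)^+=J$ directly and take the preimage $H=I\cup J$; your version is slightly more self-contained since it does not rely on the equivalence of the two descriptions.
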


Note that $f_\mathcal{A}=g^{-1}_\mathcal{A}\circ h$ where $h$ is the natural bijection from $\textswab{P}_\mathcal{A}$ onto  $\textswab{P}_\mathcal{A} \times \{\emptyset\}$. In the example considered in Figure \ref{pic1}, $\textswab{C}_\mathcal{A}=\{\emptyset\}$, hence $\textswab{I}_\mathcal{A}=\textswab{P}_\mathcal{A} \times \{\emptyset\}$, while  in the example considered in Figure \ref{pic2}, $\textswab{C}_\mathcal{A}=\{\emptyset,\{3,4,5,\dots\}\}$, and $$\textswab{J}_\mathcal{A}=\{(I,\emptyset)\colon {I\in\textswab{P}_\mathcal{A}}\} \cup \{(\emptyset,\{3,4,5,\ldots\}),(\{i\},\{3,4,5,\ldots\})_{i=1,2}\}.$$
In Figure \ref{pic3} we provide a modification of the example considered in Figure \ref{pic2} that illustrates why the condition $I\cap J=\emptyset$ is not sufficient in the definition of $\textswab{I}_\mathcal{A}$ in order for $g_\mathcal{A}$ to be bijective: take $I=\{4',5',6',\ldots\}$ and $J=\{3,4,5,\dots\}$; we have $I\cap J=\emptyset$, but $I^+=I\cup J$, so $(J\setminus I^+)^+=\emptyset\neq J$. In this case, $g_\mathcal{A}^{-1}(I,J)=g_\mathcal{A}^{-1}(I,\emptyset)$ because $[I\cup J]_\sim=[I]_\sim$, so $g_\mathcal{A}$ is not bijective.
Additional examples where we identify $\textswab{J}_\mathcal{A}$ are given in Section \ref{sec:Ex}.



 \begin{figure}[h!]
 \begin{tikzpicture}[scale=0.70]

\tikzset{vertex/.style = {shape=circle,draw,minimum size=1.3em}}
\tikzset{edge/.style = {->,> = latex'}}




\node[vertex, minimum size=.9cm] (2) at  (0,-1.5) {\footnotesize $2$};
\node[vertex, minimum size=.9cm] (1) at  (4,-1.5) {\footnotesize $1$};
\node[vertex, minimum size=.9cm] (3) at  (0,-4) {\footnotesize $3$};
\node[vertex, minimum size=.9cm] (4) at  (4,-4) {\footnotesize $4$};
\node[vertex, minimum size=.9cm] (5) at  (8,-4) {\footnotesize $5$};
\node[vertex, minimum size=.9cm] (6) at  (12,-4) {\footnotesize $6$};
\node[vertex, minimum size=.9cm] (7) at  (16,-4) {\footnotesize $7$};

\node (8) at  (18,-4) { };
\node at (18.5,-4) {$\dots$ };

\node[vertex, minimum size=.9cm] (9) at  (4,-6.5) {\footnotesize $4'$};
\node[vertex, minimum size=.9cm] (10) at  (8,-6.5) {\footnotesize $5'$};
\node[vertex, minimum size=.9cm] (11) at  (12,-6.5) {\footnotesize $6'$};
\node[vertex, minimum size=.9cm] (12) at  (16,-6.5) {\footnotesize $7'$};

\node (13) at  (18,-6.5) { };
\node at (18.5,-6.5) {$\dots$ };

\draw[edge, line width=1.5pt] (2) to[bend left=0] (1);

\draw[edge, line width=1.5pt] (3) to[bend left=0] (2);

\draw[edge, line width=1.5pt] (3) to[bend left=0] (4);
\draw[edge, line width=1.5pt] (4) to[bend left=0] (5);
\draw[edge, line width=1.5pt] (5) to[bend left=0] (6);

\draw[edge, line width=1.5pt] (6) to[bend left=0] (7);

\draw[edge, line width=1.5pt] (7) to[bend left=0] (8);

\draw[edge, line width=1.5pt] (4) to[bend left=0] (9);
\draw[edge, line width=1.5pt] (5) to[bend left=0] (10);
\draw[edge, line width=1.5pt] (6) to[bend left=0] (11);
\draw[edge, line width=1.5pt] (7) to[bend left=0] (12);

\end{tikzpicture}%
\caption{\label{pic3}The directed graph $G_{\mathcal{A}}$ of a regular family with an ascending chain (the edges implied by transitivity are omitted). 
}
\end{figure}
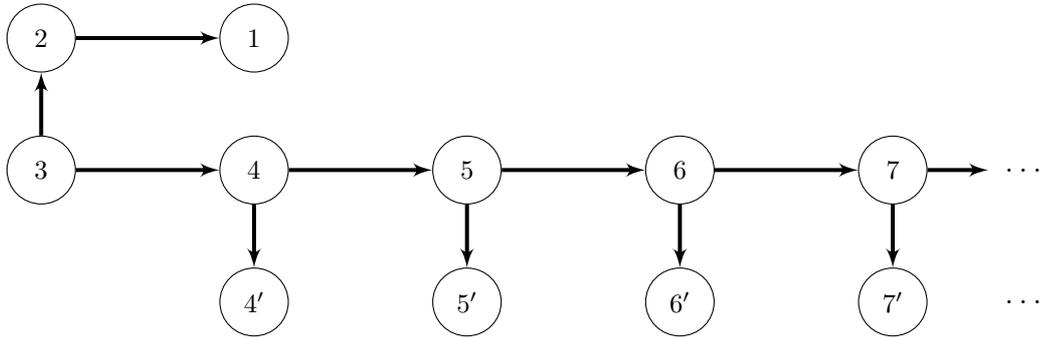


When combined, Theorem~\ref{th:equivalence} and Proposition~\ref{pro:cardinality} allow us to identify the distinct elements in $\text{Ext}(\mathcal{A})$.
\begin{proposition}\label{pro:extAeq} If $\mathcal{A}$ is regular, then
\begin{equation}\text{Ext}(\mathcal{A})= \left\{\vc q\left(\textstyle\bigcup_{i\in (I\cup J)} A_i \right): (I,J)\in \textswab{I}_\mathcal{A} \right\},\end{equation}
and distinct elements in $\textswab{I}_\mathcal{A}$ correspond to distinct extinction probability vectors.
\end{proposition}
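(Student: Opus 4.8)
The plan is to exhibit the map $(I,J)\mapsto \vc q\big(\bigcup_{i\in I\cup J}A_i\big)$ as a composition of two bijections that are already available, and then read off both assertions of the proposition from the fact that a composition of bijections is a bijection. Concretely, I would introduce $\Psi\colon 2^{K_\mathcal A}/_\sim\to\text{Ext}(\mathcal A)$ with $\Psi([I]_\sim):=\vc q\big(\bigcup_{i\in I}A_i\big)$ and use the bijection $g_\mathcal A\colon 2^{K_\mathcal A}/_\sim\to\textswab I_\mathcal A$, $g_\mathcal A([H]_\sim)=(H_M,(H_c)^+)$, from Proposition~\ref{pro:cardinality}, and show that $\Psi\circ g_\mathcal A^{-1}$ is precisely the claimed map.

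First I would record that $\Psi$ is a bijection. Since $\mathcal A$ is regular, all of (C1)--(C5) hold. Condition (C1) makes $I\mapsto\bigcup_{i\in I}A_i$ onto $\Sigma(\mathcal A)$, so every element of $\text{Ext}(\mathcal A)$ equals $\vc q\big(\bigcup_{i\in I}A_i\big)$ for some $I\subseteq K_\mathcal A$, giving surjectivity of $\Psi$. The equivalence $(i)\Leftrightarrow(iii)$ of Theorem~\ref{th:equivalence} (the direction $(i)\Rightarrow(iii)$ from (C4), the direction $(iii)\Rightarrow(i)$ from (C2) and (C5)) shows that $\Psi$ is both well defined on the quotient and injective. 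Thus $\Psi$ is a bijection, and by Proposition~\ref{pro:cardinality} (using (C3)) so is $g_\mathcal A$. It then remains to identify the composition, i.e.\ to show $g_\mathcal A^{-1}(I,J)=[I\cup J]_\sim$ for every $(I,J)\in\textswab I_\mathcal A$.

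The only genuine computation, and the step I expect to be the main obstacle, is this identification; since $g_\mathcal A$ is a bijection it is equivalent to checking $g_\mathcal A([I\cup J]_\sim)=(I,J)$, i.e.\ $(I\cup J)_M=I$ and $\big((I\cup J)_c\big)^+=J$. Writing $H:=I\cup J$, I would argue the two coordinates separately. For $H_M=I$: each $i\in I$ is maximal in $H$, since an edge $A_i\Rightarrow A_k$ with $k\neq i$ would force $k\notin I$ by primitivity of $I$, hence $k\in J$, and then the downward closure $J=J^+$ (valid as $J\in\textswab C_\mathcal A$) together with $I\cap J=\emptyset$ gives a contradiction; conversely no $j\in J$ is maximal in $H$, because $J=J_c$ forces $J_M=\emptyset$, so every $j\in J$ has an outgoing edge to some other element of $J\subseteq H$. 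For the second coordinate, with $H_M=I$ in hand one computes $H_d=H\cap I^+$, hence $H_c=H\setminus I^+=J\setminus I^+$ (using $I\subseteq I^+$), and therefore $\big(H_c\big)^+=(J\setminus I^+)^+=J$ by the defining condition of $\textswab I_\mathcal A$. This yields $g_\mathcal A([I\cup J]_\sim)=(I,J)$, so $\Psi\big(g_\mathcal A^{-1}(I,J)\big)=\vc q\big(\bigcup_{i\in I\cup J}A_i\big)$.

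Composing, $\Phi:=\Psi\circ g_\mathcal A^{-1}\colon\textswab I_\mathcal A\to\text{Ext}(\mathcal A)$ is a bijection with $\Phi(I,J)=\vc q\big(\bigcup_{i\in I\cup J}A_i\big)$. Surjectivity of $\Phi$ is exactly the displayed equality for $\text{Ext}(\mathcal A)$, and injectivity of $\Phi$ is exactly the statement that distinct pairs in $\textswab I_\mathcal A$ correspond to distinct extinction probability vectors, which completes the proof. The delicate points to keep honest are the downward-closure bookkeeping $J=J^+$ in the first coordinate and the careful use of $I\subseteq I^+$ and $(J\setminus I^+)^+=J$ in the second; everything else is formal from the two established bijections.
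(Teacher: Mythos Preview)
Your proof is correct and follows the same route the paper intends: the paper does not give a separate proof of this proposition but simply states that it follows by combining Theorem~\ref{th:equivalence} and Proposition~\ref{pro:cardinality}, and your argument is precisely a careful unpacking of that composition. The only point worth noting is that your verification of $g_\mathcal{A}([I\cup J]_\sim)=(I,J)$ works directly with $I\cup J$, whereas the paper's surjectivity argument for $g_\mathcal{A}$ in the proof of Proposition~\ref{pro:cardinality} uses $I\cup\bar J$ with $\bar J\sim J$ via the alternative description \eqref{eq:Ia2}; the two give the same equivalence class, and your direct computation is slightly cleaner for the present purpose since the statement is phrased in terms of $I\cup J$.
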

In the example considered in Figure \ref{pic2}, the distinct elements of $\text{Ext}(\mathcal{A})$ are therefore
\begin{eqnarray}\nonumber\lefteqn{\text{Ext}(\mathcal{A})}\\&=&\nonumber \left\{\vc q\left(\textstyle\bigcup_{i\in I} A_i \right): I\in \textswab{P}_\mathcal{A}\right\}\\\nonumber&\bigcup& \left\{ \vc q\left(A_3 \cup A_4\cup A_5\cup\ldots \right), \vc q\left(A_1\cup A_3 \cup A_4\cup A_5\cup\ldots \right), \vc q\left(A_2\cup A_3 \cup A_4\cup A_5\cup\ldots \right) \right\}.\\\label{ex2ext}&&\end{eqnarray}

\subsection{The number of distinct elements in $\text{Ext}(\mathcal{A})$}
\label{subsec:cardinality}


Building on the results in the previous section, we are now ready to discuss the number of distinct elements in $\text{Ext}(\mathcal{A})$, $|\text{Ext}(\mathcal{A})|$. 
In particular, Propositions \ref{pro:cardinality} and  \ref{pro:extAeq} lead to equivalent conditions for the number of distinct elements in $\text{Ext}(\mathcal{A})$ to be finite, countably infinite, or  uncountable.


\begin{theorem}\label{th:finiteinfinite2}
Given a family $\mathcal{A}$ satisfying $(C3)$,
 \begin{enumerate}[(i)]
   \item $|\text{\text{Ext}}(\mathcal{A})|$ is finite if and only if $\mathcal{A}$ is finite (that is, $\kappa_{\mathcal{A}}<\infty$).     \end{enumerate}
If (C2), (C3) and (C5) hold then
     \begin{enumerate}[(i)]
     \setcounter{enumi}{1}
   \item  \begin{equation}\label{bound}|  \text{\text{Ext}}(\mathcal{A}) | \geq | \textswab{P}_\mathcal{A}|.\end{equation}
If, in addition, 
$\mathcal{A}$ is regular and $\textswab{C}_\mathcal{A}=\{\emptyset\}$, then there is equality in \eqref{bound}. 
\item If \text{Ext}$(\mathcal{A})$ is countably infinite, then there exists a family $\mathcal{A}'\subseteq \mathcal{A}$ satisfying (C2)-(C3)-(C5) with $\kappa_{\mathcal{A}'}=\infty$ 
such that either $A'_1 \Rightarrow A'_2 \Rightarrow A'_3 \Rightarrow \dots $ or $A'_1 \Leftarrow A'_2 \Leftarrow A'_3 \Leftarrow \dots $.
In particular if $\mathcal{A}$ is regular, one can choose $\mathcal{A}'$ as a  regular family.
 \end{enumerate}
 
 If $\mathcal{A}$ is regular,
 then
 \begin{enumerate}[(i)]
   \setcounter{enumi}{3}
   \item 
   $\text{Ext}(\mathcal{A})$ is countably infinite if and only if  $\textswab{P}_\mathcal{A}$ and $\textswab{C}_\mathcal{A}$ are both countable and at least one of them is countably infinite.
   \item
   $\text{Ext}(\mathcal{A})$ 
   is uncountable if and only if either $\textswab{P}_\mathcal{A}$ is uncountable or $\textswab{C}_\mathcal{A}$ is uncountable.
 \end{enumerate}

\end{theorem}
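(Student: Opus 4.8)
The plan is to derive all five parts from the structural results already in place: the cardinality identifications of Theorem~\ref{th:equivalence} and Proposition~\ref{pro:cardinality}, the injection of Lemma~\ref{lem:primitive}, and the translation of pairwise relations into the order on extinction vectors in Corollary~\ref{cor:equivalence}. For part~\emph{(i)}, the easy direction is that a finite $\mathcal{A}$ generates a finite $\sigma$-algebra $\Sigma(\mathcal{A})$, so $\text{Ext}(\mathcal{A})$ is finite. For the converse I would use that, under (C3), for $i \neq j$ we do not have $A_i \Leftrightarrow A_j$, so $\bm{q}(A_i) \neq \bm{q}(A_j)$ by Corollary~\ref{cor:equivalence}; hence $i \mapsto \bm{q}(A_i)$ is injective on $K_{\mathcal{A}}$ and $|\text{Ext}(\mathcal{A})| \geq \kappa_{\mathcal{A}}$, so an infinite $\mathcal{A}$ forces an infinite $\text{Ext}(\mathcal{A})$. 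Part~\emph{(ii)} is a direct chaining: under (C2),(C5) Theorem~\ref{th:equivalence} gives $|\text{Ext}(\mathcal{A})| \geq |2^{K_{\mathcal{A}}}/_\sim|$, while under (C3) Lemma~\ref{lem:primitive} gives $|\textswab{P}_\mathcal{A}| \leq |2^{K_{\mathcal{A}}}/_\sim|$, yielding \eqref{bound}. For equality, regularity makes both inequalities of Theorem~\ref{th:equivalence} hold, so $|\text{Ext}(\mathcal{A})| = |2^{K_{\mathcal{A}}}/_\sim|$, and $\textswab{C}_\mathcal{A}=\{\emptyset\}$ makes $f_\mathcal{A}$ bijective by Proposition~\ref{pro:cardinality}\emph{(ii)}, so $|\textswab{P}_\mathcal{A}| = |2^{K_{\mathcal{A}}}/_\sim|$.

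The substantive part is~\emph{(iii)}. Under (C3), $\Rightarrow$ is a partial order on $K_{\mathcal{A}}$, and the primitive subsets are exactly the antichains of this poset. Suppose $\text{Ext}(\mathcal{A})$ is countably infinite; then $K_{\mathcal{A}}$ is infinite by~\emph{(i)}. I would fix an enumeration of $K_{\mathcal{A}}$ and colour each pair $\{i,j\}$ with $i<j$ by one of three colours according to whether $A_i \Rightarrow A_j$, $A_j \Rightarrow A_i$, or $A_i \nLeftrightarrow A_j$ (mutually exclusive by (C3)); the infinite Ramsey theorem then produces an infinite homogeneous set. The incomparable colour would give an infinite antichain, hence $|\textswab{P}_\mathcal{A}| \geq 2^{\aleph_0}$ (every subset of an antichain is primitive) and, by~\emph{(ii)}, an uncountable $\text{Ext}(\mathcal{A})$, a contradiction. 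So one of the other two colours occurs, producing $\mathcal{A}'=\{A'_n\} \subseteq \mathcal{A}$ with $A'_1 \Rightarrow A'_2 \Rightarrow \cdots$ or $A'_1 \Leftarrow A'_2 \Leftarrow \cdots$.

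It remains to check that $\mathcal{A}'$ inherits the required conditions, which is the crux of the argument. Conditions (C2) and (C3) restrict trivially. For (C5) I would isolate the monotonicity fact that $A_i \nRightarrow B$ and $B' \subseteq B$ imply $A_i \nRightarrow B'$ (extinction in $B$ forces extinction in the subset $B'$): writing $J_i^{\mathcal{A}'}$ for the set $J_i$ computed within $\mathcal{A}'$, we have $J_i^{\mathcal{A}'} \subseteq J_i^{\mathcal{A}}$, so $\bigcup_{j \in J_i^{\mathcal{A}'}} A_j \subseteq \bigcup_{j \in J_i^{\mathcal{A}}} A_j$ and (C5) for $\mathcal{A}$ transfers. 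If $\mathcal{A}$ is moreover regular, then (C1) is clear, and (C4) descends since, for $A \in \Sigma(\mathcal{A}') \subseteq \Sigma(\mathcal{A})$, the inclusion $I_A^{\mathcal{A}'} \subseteq I_A^{\mathcal{A}}$ gives $\bigcup_{i \in I_A^{\mathcal{A}'}} A_i \subseteq \bigcup_{i \in I_A^{\mathcal{A}}} A_i$, whence $\bm{q}\bigl(\bigcup_{i \in I_A^{\mathcal{A}'}} A_i\bigr) \geq \bm{q}\bigl(\bigcup_{i \in I_A^{\mathcal{A}}} A_i\bigr) \geq \bm{q}(A)$, using that $\bm{q}$ is nonincreasing in its set argument together with (C4) for $\mathcal{A}$; that is, $\bigcup_{i \in I_A^{\mathcal{A}'}} A_i \Rightarrow A$.

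For parts~\emph{(iv)} and~\emph{(v)} I would work under regularity, where Theorem~\ref{th:equivalence} and Proposition~\ref{pro:cardinality} give $|\text{Ext}(\mathcal{A})| = |2^{K_{\mathcal{A}}}/_\sim| = |\textswab{I}_\mathcal{A}|$. Since $\{\emptyset\} \times \textswab{C}_\mathcal{A}$ and $\textswab{P}_\mathcal{A} \times \{\emptyset\}$ lie in $\textswab{I}_\mathcal{A} \subseteq \textswab{P}_\mathcal{A} \times \textswab{C}_\mathcal{A}$, one has the cardinal sandwich
\[
\max\bigl(|\textswab{P}_\mathcal{A}|,\,|\textswab{C}_\mathcal{A}|\bigr) \ \leq\ |\textswab{I}_\mathcal{A}| \ \leq\ |\textswab{P}_\mathcal{A}| \cdot |\textswab{C}_\mathcal{A}|.
\]
Reading off the two ends, $|\textswab{I}_\mathcal{A}|$ is finite iff both factors are finite, and uncountable iff at least one factor is uncountable (giving~\emph{(v)}); hence it is countably infinite iff both factors are countable and at least one is countably infinite (giving~\emph{(iv)}). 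The main obstacle is the combinatorial heart of~\emph{(iii)} — converting countability of $\text{Ext}(\mathcal{A})$ into a monotone chain via the chain/antichain dichotomy and then verifying that the extracted subfamily still satisfies (C2)--(C5) (resp.\ (C1)--(C5)); the two monotonicity observations above are exactly what make (C4) and (C5) pass to subfamilies and should be stated as a preliminary lemma.
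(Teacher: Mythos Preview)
Your proposal is correct. The overall architecture---using Theorem~\ref{th:equivalence}, Lemma~\ref{lem:primitive}, Proposition~\ref{pro:cardinality}, and the cardinal sandwich $\max(|\textswab{P}_\mathcal{A}|,|\textswab{C}_\mathcal{A}|) \le |\textswab{I}_\mathcal{A}| \le |\textswab{P}_\mathcal{A}|\cdot|\textswab{C}_\mathcal{A}|$---matches the paper exactly for parts \emph{(ii)}, \emph{(iv)}, \emph{(v)}.

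There are two genuine differences worth noting. For the converse of \emph{(i)} you take a shortcut: under (C3) alone, $i\neq j$ gives $A_i \not\Leftrightarrow A_j$, hence $\bm{q}(A_i)\neq\bm{q}(A_j)$ by Corollary~\ref{cor:equivalence}, so $i\mapsto\bm{q}(A_i)$ injects $K_\mathcal{A}$ into $\text{Ext}(\mathcal{A})$. The paper instead routes through $|\text{Ext}(\mathcal{A})|\ge|2^{K_\mathcal{A}}/_\sim|\ge|\textswab{P}_\mathcal{A}|\ge\kappa_\mathcal{A}$; your argument is shorter and uses less machinery. For \emph{(iii)}, you invoke the infinite Ramsey theorem on the $3$-colouring of pairs to extract an infinite monochromatic set, ruling out the antichain colour via \eqref{bound}. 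The paper instead peels off successive ``layers'' $I_j$ of out-degree-zero vertices in $G_\mathcal{A}$: since every $I_j$ is primitive and hence finite, either some layer is empty (forcing an ascending chain in the residual graph) or all layers are finite and nonempty, in which case a K\"onig-type argument along the layers extracts a descending chain. Your Ramsey route is cleaner; the paper's is more elementary and constructive. Your explicit verification that (C4) and (C5) descend to subfamilies via the monotonicity $A_i\nRightarrow B,\ B'\subseteq B \Rightarrow A_i\nRightarrow B'$ is a useful addition---the paper asserts this inheritance without detail.
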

Note that if $\mathcal{A}$ is regular, the condition `$\textswab{C}_\mathcal{A}=\{\emptyset\}$' is sufficient but not necessary for the equality in \eqref{bound} to hold. Indeed, in the example considered in Figure \ref{pic2}, $\textswab{C}_\mathcal{A}\neq \{\emptyset\}$ while  $\text{Ext}(\mathcal{A})$ and $\textswab{P}_\mathcal{A}$ are both countably infinite (see Equation \eqref{ex2ext}).

The next corollary gives a sufficient condition for the existence of an infinite regular family whose associated graph is edgeless and, as a consequence, for the existence of uncountably many distinct extinction probability vectors.

\begin{corollary}\label{th:2}
If there exists a (infinite) collection of pairwise disjoint subsets $A_1, A_2, \dots$ of $\mathcal{X}$ such that for each $i \geq 1$ there exists $x_i \in \mathcal{X}$ with
\[
\mbP_{x_i}( \mathcal{S}(A_i) \cap \mathcal{E}(\mathcal{X} \backslash A_i ) )>0,
\]then there are uncountably many distinct extinction probability vectors.
\end{corollary}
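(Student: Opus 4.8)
The plan is to exhibit an injection from the power set $2^{\mathbb{N}}$ into $\text{Ext}$, namely $I \mapsto \bm{q}\big(\bigcup_{i\in I} A_i\big)$; since $|2^{\mathbb{N}}|=2^{\aleph_0}$ is uncountable, this immediately yields uncountably many distinct extinction probability vectors. Conceptually this realises the assertion flagged before the statement: the hypothesis makes $\mathcal{A}=\{A_i\}_{i\geq 1}$ a regular family whose associated graph $G_\mathcal{A}$ is edgeless, so that $\textswab{P}_\mathcal{A}=2^{\mathbb{N}}$ is uncountable and $\textswab{C}_\mathcal{A}=\{\emptyset\}$, whence Theorem~\ref{th:finiteinfinite2}\emph{(v)} gives the conclusion. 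However, it is cleaner to prove the injectivity directly and thereby bypass the verification of all of (C1)--(C5); the direct argument also shows, as the special case $I=\{i\}$, $J=\{j\}$, that $A_i \nRightarrow A_j$ for every pair $i\neq j$, i.e.\ that $G_\mathcal{A}$ is edgeless.

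The core step is injectivity. Take $I\neq J\subseteq \mathbb{N}$ and, without loss of generality, pick $k\in I\setminus J$; set $U:=\bigcup_{i\in I} A_i$ and $V:=\bigcup_{j\in J} A_j$. Since the $A_j$ are pairwise disjoint and $k\notin J$, we have $V\subseteq \mathcal{X}\setminus A_k$. Hence, on the event $\mathcal{S}(A_k)\cap\mathcal{E}(\mathcal{X}\setminus A_k)$, the process survives in $A_k\subseteq U$, hence in $U$, and becomes extinct in $\mathcal{X}\setminus A_k\supseteq V$, hence in $V$. By hypothesis this event has positive probability under $\mbP_{x_k}$, so $\mbP_{x_k}(\mathcal{S}(U)\cap\mathcal{E}(V))>0$, which is precisely condition \emph{(v)} of Theorem~\ref{th:1} for the pair $(U,V)$, i.e.\ $U\nRightarrow V$. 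By Corollary~\ref{cor:equivalence}\emph{(1)} this gives $\bm{q}(U)\not\geq\bm{q}(V)$, and in particular $\bm{q}(U)\neq\bm{q}(V)$. Therefore $I\mapsto\bm{q}\big(\bigcup_{i\in I} A_i\big)$ is injective, and since its domain is uncountable, so is $\text{Ext}$.

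The step that requires care — and where the precise strength of the hypothesis is used — is the injectivity argument: I need extinction in the \emph{entire} set $V$, which may be an infinite union of the $A_j$. This is exactly why the hypothesis is stated with extinction in the whole complement $\mathcal{E}(\mathcal{X}\setminus A_i)$ rather than merely in individual sets $A_j$. A weaker, pairwise assumption (a witness $x_{ij}$ for each $A_i\nRightarrow A_j$) would only control one $A_j$ at a time and would not produce a single starting point that simultaneously witnesses extinction on all of $V$; the union argument would then break down. With the stated hypothesis this obstacle disappears, because the single event $\mathcal{S}(A_k)\cap\mathcal{E}(\mathcal{X}\setminus A_k)$ already forces extinction everywhere off $A_k$, covering $V$ in one stroke. (If one instead prefers the framework route, the remaining bookkeeping is to check (C1)--(C5) for $\mathcal{A}$: (C1) is the disjointness hypothesis, (C2) follows since $\mbP_{x_i}(\mathcal{S}(A_i))>0$, (C3) follows from $A_i\nLeftrightarrow A_j$, and (C4)--(C5) are immediate once $G_\mathcal{A}$ is seen to be edgeless; then Theorem~\ref{th:finiteinfinite2}\emph{(v)} applies verbatim.)
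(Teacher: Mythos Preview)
Your proof is correct. The paper takes the framework route you sketch in your final paragraph: it sets $\mathcal{A}=\{A_1,A_2,\dots\}$, observes that the hypothesis gives $A_i\nRightarrow \mathcal{X}\setminus A_i$ and hence $A_i\nRightarrow A_j$ for all $j\neq i$, asserts that $\mathcal{A}$ is regular and $I=\mathbb{N}$ is primitive, and then invokes Lemma~\ref{pro:finiteinfinite1}\emph{(ii)} together with Theorem~\ref{th:finiteinfinite2} to conclude.

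Your direct injection argument is a genuinely cleaner alternative. By working directly with Theorem~\ref{th:1}\emph{(v)} on the pair $(U,V)$, you bypass the verification of (C4) and (C5) entirely---the paper's proof glosses over this verification with a one-line ``This implies that $\mathcal{A}$ is regular'', whereas your approach makes it unnecessary. The key insight you highlight, that the hypothesis gives extinction in the \emph{whole} complement $\mathcal{X}\setminus A_k$ and therefore in any union $V$ avoiding $A_k$, is exactly what makes the direct argument go through and is also what drives (C5) in the framework route; you have correctly identified that a merely pairwise hypothesis would not suffice. The cost of the direct route is only that it does not illustrate the general machinery of Section~\ref{sec:extprobvec}, which is presumably why the paper prefers the framework argument at this point.
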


\section{Examples}\label{sec:Ex}

We are ready to answer two important questions:\begin{enumerate} 
\item The first question was asked previously in \cite{cf:BZ2020}: Is it possible to construct an irreducible MGWBP with countably infinitely many extinction probability vectors? 
Theorem~\ref{th:finiteinfinite2} not only suggests that the answer is positive, it also gives insight into how such examples may arise.
In Example 1 we not only answer this question but we go further by constructing an irreducible family of processes where, by varying a single parameter, we can transition smoothly between cases where the process has any finite number of extinction probability vectors, a countably infinite number of extinction probability vectors, and an uncountably infinite number of extinction probability vectors. 

\item Given a regular family $\mathcal{A}$, do we always have $|\text{Ext}(\mathcal{A})|=|\textswab{P}_\mathcal{A}|$? If $\textswab{P}_\mathcal{A}$ is either finite or uncountable, then equality holds. Thus, by Theorem ~\ref{th:finiteinfinite2}\emph{(v)}, we may only have $|\text{Ext}(\mathcal{A})|>|\textswab{P}_\mathcal{A}|$ if $\textswab{P}_\mathcal{A}$ is countable and $\textswab{C}_\mathcal{A}$ is uncountable.   In Example 2, both $\textswab{P}_\mathcal{A}$ and $\textswab{C}_\mathcal{A}$ are countable, and thus $|\text{Ext}(\mathcal{A})|=|\textswab{P}_\mathcal{A}|$, while in Example 3, $\textswab{P}_\mathcal{A}$ is countable and $\textswab{C}_\mathcal{A}$ is uncountable, and thus $|\text{Ext}(\mathcal{A})|>|\textswab{P}_\mathcal{A}|$. This means the answer to the above question is negative. 
\end{enumerate}
Example 1 is an application of the results developed in Section \ref{sec:comparison} and \ref{sec:extprobvec}, and Examples 2 and 3 highlight the framework developed in Section \ref{sec:extprobvec}.

%
%
%

\bigskip
\paragraph{\textbf{Example 1: From finitely many to uncountably many extinction probability vectors.}}

Consider a process with type set $\mathcal{X}= \mathbb{N}_0^2$, where \begin{itemize}
\item individuals of type $(0,0)$ have one child of type $(1,0)$ with probability $q$, and 0 children otherwise;
\item individuals of type $(0,j)$, $j \geq 1$, have one child of type $(0,j-1)$ with probability $p<1$, and 0 children otherwise;
\item individuals of type $(i,0)$, $i \geq 1$, have one child of type $(i,1)$ with probability 1, and one child of type $(i+1,0)$ with probability $q$; and
\item individuals of type $(i,j)$, $i,j \geq 1$, have a geometric number of children of type $(i-1,j)$ with mean $r^{-j+1}$, and one child of type $(i,j+1)$ with probability 1.
\end{itemize}
A visual representation of these offspring distributions is given in Figure \ref{Ex2}.
We partition $\mathcal{X}$ in two ways: by \emph{levels}, $\mathcal{L}_i := \{ (i,j) \}_{ j \geq 0}$ for $i\geq 0$,
and by \emph{phases} $\mathcal{P}_j:= \{ (i,j) \}_{ i \geq 0}$, for $j\geq 0$.

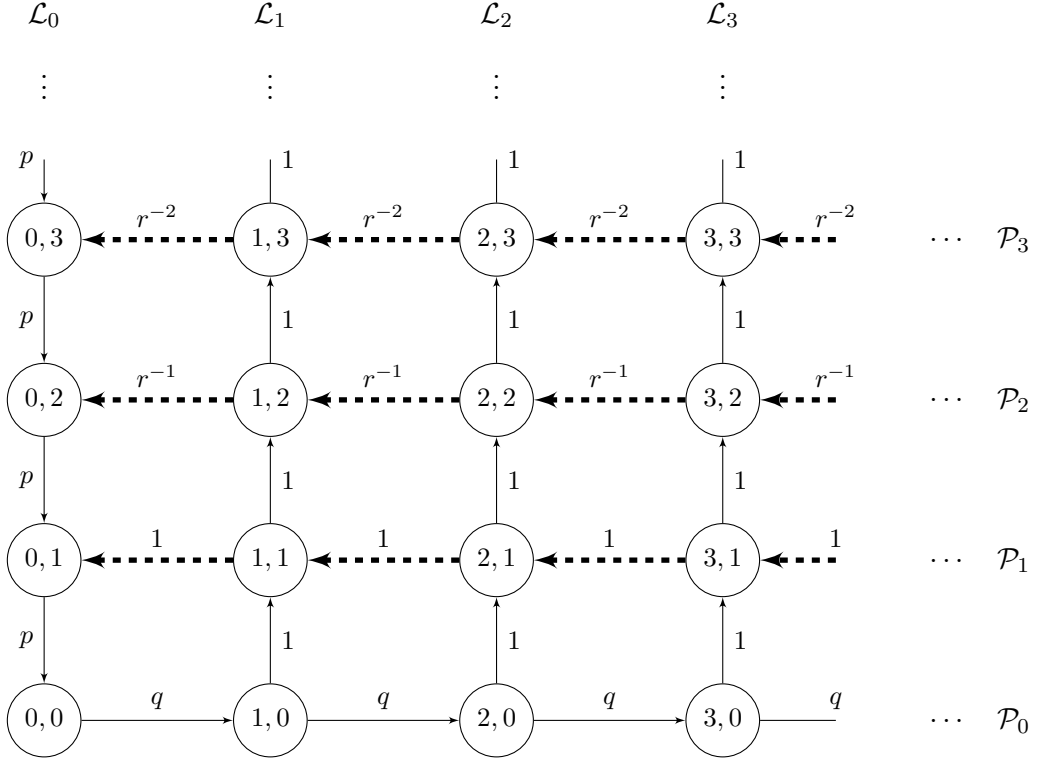
\begin{figure}[ht]
\centering
\begin{tikzpicture}[scale=0.85]

\tikzset{vertex/.style = {shape=circle,draw,minimum size=1.3em}}
\tikzset{edge/.style = {->,> = latex'}}

%

\node[vertex, minimum size=.9cm] (01) at  (0,0) {\footnotesize$0 ,0$};
\node[vertex, minimum size=.9cm] (11) at  (3.5,0) {\footnotesize $1, 0 $};
\node[vertex, minimum size=.9cm] (21) at  (7,0) {\footnotesize $2,0$};
\node[vertex, minimum size=.9cm] (31) at  (10.5,0) {\footnotesize $3,0$};

\node[vertex, minimum size=.9cm] (02) at  (0,2.5) {\footnotesize $ 0, 1 $};
\node[vertex, minimum size=.9cm] (12) at  (3.5,2.5) {\footnotesize $1,1$};
\node[vertex, minimum size=.9cm] (22) at  (7,2.5) {\footnotesize $2,1$};
\node[vertex, minimum size=.9cm] (32) at  (10.5,2.5) {\footnotesize $3,1$};

\node[vertex, minimum size=.9cm] (03) at  (0,5) {\footnotesize $ 0, 2 $};
\node[vertex, minimum size=.9cm] (13) at  (3.5,5) {\footnotesize $1,2$};
\node[vertex, minimum size=.9cm] (23) at  (7,5) {\footnotesize $2,2$};
\node[vertex, minimum size=.9cm] (33) at  (10.5,5) {\footnotesize $3,2$};

\node[vertex, minimum size=.9cm] (04) at  (0,7.5) {\footnotesize $ 0, 3 $};
\node[vertex, minimum size=.9cm] (14) at  (3.5,7.5) {\footnotesize $1,3$};
\node[vertex, minimum size=.9cm] (24) at  (7,7.5) {\footnotesize $2,3$};
\node[vertex, minimum size=.9cm] (34) at  (10.5,7.5) {\footnotesize $3,3$};

\node at (14,7.5) {\small $\dots$};
\node at (14,5) {\small $\dots$};
\node at (14,2.5) {\small $\dots$};
\node at (14,0) {\small $\dots$};

\node at (15,7.5) {\small $\mathcal{P}_3$};
\node at (15,5) {\small $\mathcal{P}_2$};
\node at (15,2.5) {\small $\mathcal{P}_1$};
\node at (15,0) {\small $\mathcal{P}_0$};

\node at (0,10) {\small $\vdots$};
\node at (3.5,10) {\small $\vdots$};
\node at (7,10) {\small $\vdots$};
\node at (10.5,10) {\small $\vdots$};

\node at (0,11) {\small $\mathcal{L}_0$};
\node at (3.5,11) {\small $\mathcal{L}_1$};
\node at (7,11) {\small $\mathcal{L}_2$};
\node at (10.5,11) {\small $\mathcal{L}_3$};



%

\draw[edge,above] (01) to[bend left=0] node {\footnotesize $q$ } (11);
\draw[edge,above] (11) to[bend left=0] node {\footnotesize $q $ } (21);
\draw[edge,above] (21) to[bend left=0] node {\footnotesize $q $ } (31);

\draw[edge,left] (02) to[bend right=0] node {\footnotesize $p$ } (01);
\draw[edge,left] (03) to[bend right=0] node {\footnotesize $p$ } (02);
\draw[edge,left] (04) to[bend right=0] node {\footnotesize $p$ } (03);


\draw[edge,right] (11) to[bend left=0] node {\footnotesize $1$ } (12);
\draw[edge,right] (12) to[bend left=0] node {\footnotesize $1$ } (13);
\draw[edge,right] (13) to[bend left=0] node {\footnotesize $1$ } (14);

\draw[edge,right] (21) to[bend left=0] node {\footnotesize $1$ } (22);
\draw[edge,right] (22) to[bend left=0] node {\footnotesize $1$ } (23);
\draw[edge,right] (23) to[bend left=0] node {\footnotesize $1$ } (24);

\draw[edge,right] (31) to[bend left=0] node {\footnotesize $1$ } (32);
\draw[edge,right] (32) to[bend left=0] node {\footnotesize $1$ } (33);
\draw[edge,right] (33) to[bend left=0] node {\footnotesize $1$ } (34);

\draw[edge,above,  dashed, line width=2pt] (32) to[bend left=0] node {\footnotesize $1$ } (22);
\draw[edge,above,  dashed, line width=2pt] (22) to[bend left=0] node {\footnotesize $1$ } (12);
\draw[edge,above,  dashed, line width=2pt] (12) to[bend left=0] node {\footnotesize $1$ } (02);

\draw[edge,above,  dashed, line width=2pt] (33) to[bend left=0] node {\footnotesize $r^{-1}$ } (23);
\draw[edge,above,  dashed, line width=2pt] (23) to[bend left=0] node {\footnotesize $r^{-1}$ } (13);
\draw[edge,above,  dashed, line width=2pt] (13) to[bend left=0] node {\footnotesize $r^{-1}$ } (03);

\draw[edge,above, dashed, line width=2pt] (34) to[bend left=0] node {\footnotesize $r^{-2}$ } (24);
\draw[edge,above,  dashed, line width=2pt] (24) to[bend left=0] node {\footnotesize $r^{-2}$ } (14);
\draw[edge,above,  dashed, line width=2pt] (14) to[bend left=0] node {\footnotesize $r^{-2}$ } (04);

\draw[above] (31) to[pos=1] node {\footnotesize $q$} (12.25,0);

\draw[edge,above,  dashed, line width=2pt] (12.25,2.5) to[pos=0] node {\footnotesize $1$} (32);
\draw[edge,above,  dashed, line width=2pt] (12.25,5) to[pos=0] node {\footnotesize $r^{-1}$} (33);
\draw[edge,above,  dashed, line width=2pt] (12.25,7.5) to[pos=0] node {\footnotesize $r^{-2}$} (34);

\draw[right] (34) to[pos=1] node {\footnotesize $1$} (10.5,8.75);
\draw[right] (24) to[pos=1] node {\footnotesize $1$} (7,8.75);
\draw[right] (14) to[pos=1] node {\footnotesize $1$} (3.5,8.75);

\draw[edge,left] (-0,8.75) to[bend right=0,pos=0] node {\footnotesize $p$}  (04);

\end{tikzpicture}
\caption{\label{Ex2} A visual representation of the offspring distributions in Example 1. The solid arrows represent Bernoulli distributions and bold dashed arrows represent geometric distributions 
(the weights represent the corresponding means).
}
\end{figure}

Consider the family
$\mathcal{A}=\{\mathcal{L}_1,\mathcal{L}_2,\ldots\}$. 
The next proposition implies that, for any $p,q<1$, we can choose $r$ such that the process has any finite number $k \geq 1$ of extinction probability vectors ($p^{1/(k-1)}<r\leq p^{1/k}$), which corresponds to
$$\mathcal{L}_1\Leftarrow  \mathcal{L}_2 \Leftarrow\ldots \Leftarrow \mathcal{L}_{k-1}\Leftarrow \mathcal{L}_k\Leftrightarrow \mathcal{L}_{k+1}\Leftrightarrow\ldots,$$
countably infinite many distinct extinction probability vectors ($r=1$), which corresponds to
$$\mathcal{L}_1\Leftarrow  \mathcal{L}_2 \Leftarrow  \mathcal{L}_3\Leftarrow  \mathcal{L}_4 \Leftarrow\ldots,$$
or uncountably many distinct extinction probability vectors ($r>1$), which corresponds to
$$\mathcal{L}_1\nLeftrightarrow  \mathcal{L}_2 \nLeftrightarrow  \mathcal{L}_3\nLeftrightarrow \mathcal{L}_4 \nLeftrightarrow\ldots.$$
Moreover, the proposition implies that, when $r\leq 1$, Ext $=$ Ext$(\mathcal{A})$. Note that in this example, $\textswab{C}_{\mathcal A}=\emptyset$, and when $r\leq 1$, the only primitive subsets are singletons.
In preparation for the next result, for any $A\subseteq\mathcal{X}$ we let 
\[
\iota(A) := \min\{ i\geq 0 \colon |\mathcal{L}_i \cap A|= \infty \}, 
\]
and set $\iota(A):=\infty$ if the above set is empty.

\begin{proposition}\label{pro:ultimate}
In Example 1, 
\begin{itemize}
\item[(i)] if $r<1$, then there is a finite number $i^*:=\min\{i\geq 1\colon r^i\leq p\}$ of distinct extinction probability vectors, namely $\vc q=\bm{\tilde{q}}$ if $i^*=1$, and
 \begin{equation}\label{eq2}\vc q=\bm{q}(\mathcal{L}_1) <\ldots < \bm{q}(\mathcal{L}_{i^*})= \bm{\tilde{q}}\qquad\textrm{ if $i^*\geq2$.}\end{equation} In particular, if $\iota(A)<i^*$ then $\bm{q}(A)=\bm{q}(\mathcal{L}_{\iota(A)})$, whereas if $\iota(A)\geq i^*$ then $\bm{q}(A)=\bm{\tilde q}$.
\item[(ii)] if $r=1$, then there are countably infinite many distinct extinction probability vectors, namely
\begin{equation}\label{eq1}\vc q=\bm{q}(\mathcal{L}_1) < \bm{q}(\mathcal{L}_{2})<\bm{q}(\mathcal{L}_3)<\ldots,\end{equation}and $\tilde{\vc q}$. In particular, if $\iota(A)<\infty$ then $\bm{q}(A)=\bm{q}(\mathcal{L}_{\iota(A)})$, whereas if $\iota(A)=\infty$ then $\bm{q}(A)=\bm{\tilde q}$.
 \item[(iii)] if $r> 1$, then there are uncountably many distinct extinction probability vectors.
\end{itemize}
\end{proposition}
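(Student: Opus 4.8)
\emph{Step 1 (reduction to column reachability).} The plan is to translate every survival statement into the reachability of the columns $\mathcal{L}_i$. Since the moves $(i,j)\to(i,j+1)$ with $i\ge 1$ are deterministic, as soon as one particle reaches \emph{any} type of $\mathcal{L}_i$ it generates an immortal upward line inside $\mathcal{L}_i$; hence $\mathcal{E}(\mathcal{L}_i)=\mathcal{N}(\mathcal{L}_i)$ and $q_x(\mathcal{L}_i)=\mbP_x(\text{column }i\text{ is never reached})$. More generally, this immortal line eventually occupies every sufficiently high phase of its column, and (for $r\le1$) from any column the leftward offspring a.s.\ populate the column one unit to its left; I would use this to show that survival in a general $A$ is equivalent to reaching column $\iota(A)$ when $\iota(A)<\infty$, and to reaching arbitrarily high columns (equivalently partial survival, giving $\bm q(A)=\bm{\tilde q}$) when $\iota(A)=\infty$. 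This yields $\mathrm{Ext}=\mathrm{Ext}(\mathcal A)$ for $r\le1$ together with the stated description of $\bm q(A)$, and in particular $\bm q=\bm q(\mathcal{L}_1)$ and $\bm{\tilde q}=\bm q(\mathcal{L}_{i^*})$.

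\emph{Step 2 (the relations).} By Corollary~\ref{cor:equivalence}, $\mathcal{L}_a\Rightarrow\mathcal{L}_b$ iff reaching column $a$ forces reaching column $b$ from every starting type. Writing $\mu_j:=r^{-j+1}$, one has $\prod_{j\ge1}(1+\mu_j)^{-1}=0$ when $r\le1$, so the immortal line a.s.\ spawns a leftward child; thus reaching column $k$ a.s.\ leads to column $k-1$, giving $\mathcal{L}_{i+1}\Rightarrow\mathcal{L}_i$ for all $i$. The converse requires \emph{increasing} the column, which is possible only along the phase-$0$ backbone, and phase $0$ can be entered only at $(0,0)$; hence $\mathcal{L}_i\Rightarrow\mathcal{L}_{i+1}$ is governed by the probability of ever returning to $(0,0)$ to relaunch the backbone. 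I therefore introduce $\psi^{(j)}_k:=\mbP_{(k,j)}(\mathcal{N}(\{(0,0)\}))$.

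\emph{Step 3 (the recursion and threshold — the crux).} Conditioning on the offspring of $(k,j)$ (one deterministic up-child and a $\mathrm{Geom}$ mean-$\mu_j$ set of left-children, with $\mbE[s^L]=(1+\mu_j(1-s))^{-1}$) gives
\[
\psi^{(j)}_k=\frac{\psi^{(j+1)}_k}{1+\mu_j\,(1-\psi^{(j)}_{k-1})},\qquad \psi^{(j)}_0=1-p^{\,j},
\]
and, as $\psi^{(j)}_k\to1$ when $j\to\infty$, this solves to $\psi^{(j)}_k=\prod_{m\ge j}\bigl(1+\mu_m(1-\psi^{(m)}_{k-1})\bigr)^{-1}$. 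Induction on $k$ (base $1-\psi^{(j)}_0=p^{\,j}$) would show $1-\psi^{(j)}_k\asymp\mu_j^{\,k}p^{\,j}$ as $j\to\infty$ in the convergent regime, whence $\psi^{(1)}_i>0$ iff $\sum_j\mu_j^{\,i}p^{\,j}<\infty$, i.e.\ iff $r=1$ or $r^i>p$. When $\psi^{(1)}_i>0$, starting from $(i,1)$ there is positive probability of surviving in $\mathcal{L}_i$ while never visiting $(0,0)$, hence never entering phase $0$ and never reaching column $i+1$; by Theorem~\ref{th:1}\,(v) this gives $\bm q(\mathcal{L}_i)<\bm q(\mathcal{L}_{i+1})$. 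When $r<1$ and $r^i\le p$, each high phase $j$ is supercritical, so the same-phase leftward descendants of the immortal line reach column $0$ with $\approx\mu_j^{\,i}$ particles, each dropping to $(0,0)$ with probability $p^{\,j}$; since $\mu_j^{\,i}p^{\,j}\to\infty$ and these events are independent across $j$, Borel--Cantelli yields infinitely many relaunches a.s., hence a.s.\ a backbone reaching column $i+1$, so $\mathcal{L}_i\Leftrightarrow\mathcal{L}_{i+1}$ and $\bm q(\mathcal{L}_i)=\bm q(\mathcal{L}_{i+1})$.

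\emph{Step 4 (assembly).} For $r<1$, $i^*=\min\{i:r^i\le p\}$ is finite and Steps~2--3 give $\bm q=\bm q(\mathcal{L}_1)<\dots<\bm q(\mathcal{L}_{i^*})=\bm q(\mathcal{L}_{i^*+1})=\dots=\bm{\tilde q}$; with Step~1 this gives exactly $i^*$ vectors and the $\iota$-description (part (i)). For $r=1$ every inequality is strict, so $\mathcal A$ is regular with $G_\mathcal A$ the chain $\mathcal{L}_1\Leftarrow\mathcal{L}_2\Leftarrow\cdots$; its primitive subsets are $\emptyset$ and the singletons (countably many), and since $\Rightarrow$ strictly decreases the index there is no infinite ascending chain, i.e.\ $\textswab{C}_\mathcal A=\{\emptyset\}$, so Theorem~\ref{th:finiteinfinite2}\,(iv) gives countably many vectors (part (ii)). For $r>1$, $\sum_j\mu_j<\infty$, so from $(i,0)$ there is probability $(1-q)\prod_j(1+\mu_j)^{-1}>0$ that the process is the pure immortal line in column $i$; this gives $\mbP_{(i,0)}(\mathcal{S}(\mathcal{L}_i)\cap\mathcal{E}(\mathcal{X}\setminus\mathcal{L}_i))>0$ for the pairwise disjoint $\mathcal{L}_i$, and Corollary~\ref{th:2} yields uncountably many vectors (part (iii)). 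The main obstacle is Step~3: establishing $1-\psi^{(j)}_k\asymp\mu_j^{\,k}p^{\,j}$ sharply enough to read off the threshold $r^i$ versus $p$ (a naive first-moment count of relaunches is dominated by rare deep excursions and gives the \emph{wrong} threshold), and upgrading "$\psi^{(1)}_i=0$'' to "infinitely many relaunches a.s.''; a secondary difficulty is the general-$A$ reduction of Step~1, in particular the recurrence dichotomy showing that failing to reach column $\iota(A)$ forces only finitely many relaunches and hence extinction in $A$.
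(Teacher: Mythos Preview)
Your overall architecture matches the paper's: both reduce everything to the column relations $\mathcal{L}_{i+1}\Rightarrow\mathcal{L}_i$ (via Borel--Cantelli on the immortal line's leftward offspring), both identify reaching $(0,0)$ as the bottleneck for $\mathcal{L}_i\Rightarrow\mathcal{L}_{i+1}$, both handle $r>1$ by showing the immortal line stays in its column with positive probability and invoking Corollary~\ref{th:2}, and both carry out the general-$A$ reduction in the same spirit. The Step~4 bookkeeping (no ascending chains since $\Rightarrow$ decreases the index; primitive subsets are singletons) is also the same.

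The genuine divergence is your Step~3. You propose to compute $\psi^{(j)}_k=\mbP_{(k,j)}(\mathcal{N}(\{(0,0)\}))$ via the recursion $\psi^{(j)}_k=\prod_{m\ge j}(1+\mu_m(1-\psi^{(m)}_{k-1}))^{-1}$ and extract the threshold from the asymptotic $1-\psi^{(j)}_k\asymp\mu_j^{\,k}p^{\,j}$. The paper instead uses a first-moment argument that avoids this analysis entirely: it bounds the expected number of \emph{frozen} $(0,0)$-type descendants of $(i,k+1)$ by $p^k\sum_{j\ge1}\binom{i+j-1}{j-1}(r^{-j+1})^i p^j$ (which converges precisely when $r^i>p$, and for $r=1$ gives $p^k\sum_j\binom{i+j-1}{j-1}p^j<\infty$), then chooses $k$ large enough to make this expectation $<1$ and applies Markov's inequality. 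This directly gives a positive probability of never reaching $(0,0)$ from some type in $\mathcal{L}_i$, hence $\mathcal{L}_i\nRightarrow\mathcal{L}_{i+1}$. For the reverse direction $r^i\le p$, the paper computes the probability that the phase-$j$ member of the immortal line produces at least one $(0,0)$-descendant along the direct phase-$j$ path as $1-G_j^{(i)}(1-p^j)$, where $G_j^{(i)}$ is the $i$-fold composition of the geometric p.g.f.\ with mean $r^{-j+1}$, and shows $\sum_j(1-G_j^{(i)}(1-p^j))=\infty$; Borel--Cantelli then gives infinitely many relaunches a.s. Your worry that ``a naive first-moment count \dots gives the wrong threshold'' does not arise here because the paper counts \emph{frozen} descendants (one generation of $(0,0)$-arrivals, not relaunched), which is exactly what is needed for Markov and lands on the correct threshold $r^i$ versus $p$. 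So your recursion approach is valid but strictly harder; the paper's route removes precisely the obstacle you flag.
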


\begin{figure}[ht]
\includegraphics[scale=0.35]{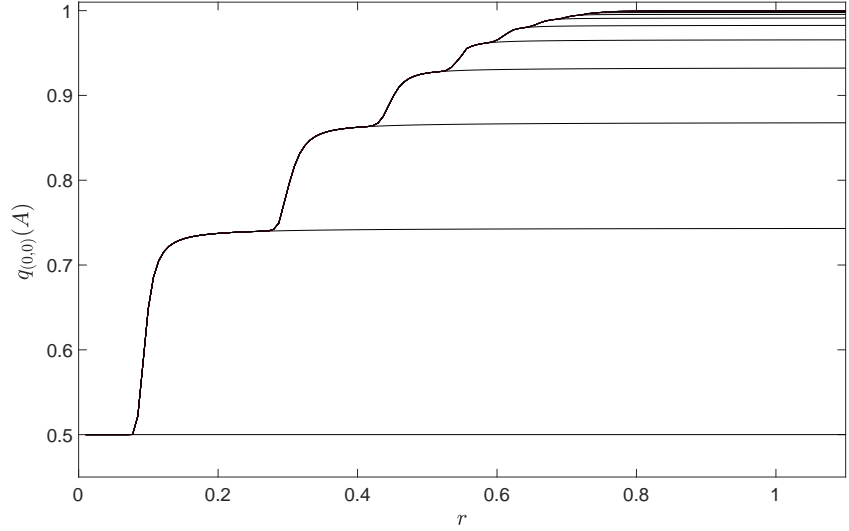}
\caption{\label{f1}
The probabilities of extinction $q_{(0,0)}(\mathcal{L}_1)$ (lowest curve), $q_{(0,0)}(\mathcal{L}_2)$ (second lowest curve), $q_{(0,0)}(\mathcal{L}_3)$, \ldots, as a function of $r$ when $p=0.1$ and $q=0.5$. 
}\end{figure}

 
 Figure \ref{f1} shows the distinct probabilities of extinction 
 $\{q_{(0,0)}(\mathcal{L}_i)\}_{i \ge 1}$
as a function of $r$ when $p=0.1$ and $q=0.5$.  
Observe that, in accordance with Proposition~\ref{pro:ultimate}, the number of extinction probabilities increases by one at $r=\sqrt[i]{p}$ for each $i \ge 1$.
 The probabilities are computed using the iterative method presented in Appendix A.

We now consider what may happen if the family $\mathcal{A}$ is not chosen carefully (i.e. is not regular).
Consider the family $\mathcal{A}' = \{ \mathcal L'_0,\mathcal L'_1,\mathcal L'_2,\ldots \}$, where
\[
\mathcal L'_{i} = \big( \bigcup_{k=0}^\infty \big \{(i, 2k) \big\} \big) \cup \big( \bigcup_{k=0}^\infty  \big \{(k, 2i+1) \big\} \big),\qquad i \geq0.
\]
Note that $\mathcal{A}'$ does not satisfy (C5): indeed we  have that $\mathcal L'_{1} \Rightarrow \bigcup_{j\in J_1} \mathcal L'_{j} $, where $J_1=\{0,2,3,4,\ldots\}$. 
The next proposition implies that, when $r>1$, $\textswab{P}_\mathcal{A'}$ is uncountable, while \emph{Ext}$(\mathcal{A}')$ is countable; this shows that, without (C5),  Theorem~\ref{th:finiteinfinite2}\emph{(ii)} might not hold.

\begin{proposition}\label{prop:f}
If $r>1$, then $\mathcal L'_i \nLeftrightarrow \mathcal L'_j$ for all $i \neq j$ and \emph{Ext}$(\mathcal{A}')$ is countably infinite. 
\end{proposition}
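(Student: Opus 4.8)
The plan is to prove the two assertions in turn, reducing each to the pairwise survival relations developed in Section~\ref{sec:comparison}. For the incomparability, Corollary~\ref{cor:equivalence}\emph{(3)} shows that $\mathcal L'_i\nLeftrightarrow \mathcal L'_j$ is equivalent to having both $\mathcal L'_i\nRightarrow \mathcal L'_j$ and $\mathcal L'_j\nRightarrow \mathcal L'_i$, and by the equivalence of \emph{(i)} and \emph{(v)} in Theorem~\ref{th:1} each of these reduces to exhibiting a single initial type from which, with positive probability, the process survives in the first set and dies out in the second. I would fix $i\ge 1$, pick an even integer $M>2j+1$, and start from $(i,M)$. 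Every type $(i,m)$ with $m\ge1$ has a child of type $(i,m+1)$ with probability one, so there is a deterministic line of descent that stays in level $i$ and climbs through all phases $M,M+1,\dots$ (an immortal spine). Let $D$ be the event that no individual of this spine ever has a child in level $i-1$. Since the offspring toward $(i-1,m)$ is geometric with mean $r^{1-m}$, one has $\mathbb P_{(i,M)}(D)=\prod_{m\ge M}(1+r^{1-m})^{-1}$, which is strictly positive precisely because $r>1$ makes $\sum_m r^{1-m}<\infty$. On $D$ the whole progeny is the spine, so it remains in level $i$ at phases $\ge M>2j+1$; it therefore meets the even phases of level $i$ (which lie in $\mathcal L'_i$ but, by disjointness, not in $\mathcal L'_j$) infinitely often, while never visiting level $j$ nor phase $2j+1$. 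Hence $D\subseteq \mathcal S(\mathcal L'_i)\cap\mathcal E(\mathcal L'_j)$, so $\mathcal L'_i\nRightarrow \mathcal L'_j$; exchanging $i$ and $j$ gives the converse.

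For the counting, I would first observe that $\{\mathcal L'_i\}_{i\ge0}$ is a partition of $\mathcal X$ (a type of even phase lies in the level block of exactly one $\mathcal L'_i$, and a type of odd phase $2i+1$ lies in the phase block of $\mathcal L'_i$), so $\Sigma(\mathcal A')=\{\bigcup_{i\in I}\mathcal L'_i:I\subseteq\mathbb N_0\}$ and it suffices to count the vectors $\bm q(\bigcup_{i\in I}\mathcal L'_i)$. The key point is that for \emph{infinite} $I$ this vector equals $\bm q$: on global survival there is an immortal spine, which climbs through all large phases and hence meets the infinite set of phases $\{2i+1:i\in I\}\subseteq\bigcup_{i\in I}\mathcal L'_i$ at infinitely many generations, so survival in $\bigcup_{i\in I}\mathcal L'_i$ coincides with global survival and $\bm q(\bigcup_{i\in I}\mathcal L'_i)=\bm q$. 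For \emph{finite} $I$ there are only countably many such unions, hence at most countably many vectors; and by the incomparability just proved the singletons $\bm q(\mathcal L'_i)$, $i\ge1$, are pairwise distinct, giving infinitely many distinct vectors. Consequently $\text{Ext}(\mathcal A')=\{\bm q\}\cup\{\bm q(\bigcup_{i\in I}\mathcal L'_i):I\ \text{finite}\}$ is countably infinite, which is the desired conclusion.

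The main obstacle is the set $\mathcal L'_0$, and more generally any block attached to level $0$: level $0$ supports no immortal spine, since every type $(0,j)$ only decreases the phase and may die, so the clean isolation event $D$ above is unavailable and the comparison of $\mathcal L'_0$ with the other blocks does not follow from a single spine. Settling the pair involving $i=0$ requires deciding when the even phases of level $0$, or phase $1$, are hit infinitely often, which is a genuine recurrence problem governed by the \emph{critical} phase-$1$ offspring (mean $r^{1-1}=1$) together with the base-row recycling between level $0$ and level $1$; this is the delicate ingredient on which the full ``for all $i\neq j$'' statement rests, and I expect it to be the hardest step. A secondary but routine point is the positivity of the infinite product $\prod_{m\ge M}(1+r^{1-m})^{-1}$, which is exactly where the hypothesis $r>1$ is used.
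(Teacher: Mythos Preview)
Your approach is essentially the paper's: the spine/isolation event $D$ in a fixed level $i\ge1$ to obtain $\mathcal S(\mathcal L'_i)\cap\mathcal E(\mathcal L'_j)$ with positive probability, and the finite/infinite dichotomy on $I$ for the counting. Your treatment of the infinite case is the same as the paper's (the spine passes through all sufficiently large phases and therefore through infinitely many of the odd phases $\{2i+1:i\in I\}$), and your finite-case count is correct; the paper adds the sharper identification $\bm q(\bigcup_{i\in I}\mathcal L'_i)=\bm q(\bigcup_{i\in I}\mathcal L_i)$ for finite $I$, but this is not needed for countability.

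Your flag about $\mathcal L'_0$ is a fair technical point: level~$0$ has no immortal spine, so the isolation event $D$ does not directly give $\mathcal L'_0\nRightarrow\mathcal L'_j$. The paper's proof also glosses over this case with the phrase ``similar argument''. Two remarks, however. First, for the purpose the proposition serves (showing that $\textswab P_{\mathcal A'}$ is uncountable while $\text{Ext}(\mathcal A')$ is countable, hence that (C5) is genuinely needed in Theorem~\ref{th:finiteinfinite2}\emph{(ii)}), the pairs $i,j\ge1$ already yield an infinite primitive set, so the conclusion you draw in the second paragraph is unaffected. Second, you should not describe the $i=0$ case as a recurrence problem for the critical phase-$1$ branching alone: what is needed is only the existence of \emph{some} starting point from which $\mathcal S(\mathcal L'_0)\cap\mathcal E(\mathcal L'_j)$ has positive probability, and this can be approached by a first-visit decomposition through phase~$0$ rather than a full recurrence analysis. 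So the obstacle is real but narrower than you suggest.
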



\bigskip
\paragraph{\textbf{Example 2: A BRW on a grid.}}



Consider a branching process with typeset $\mathbb{N} \times \mathbb{N}$ in which the  generating function of type $(i,j)$ is 
\[
G_{(i,j)}(\bm{s}) = \frac{1}{3}+ \frac{1}{2}s_{(i,j)}^3 + \frac{1}{12}s_{(i,j+1)}^3+\frac{1}{12}s_{(i+1,j)}^3.
\]
In other words, an individual of type $(i,j)$ has no children with probability $1/3$, three children of type $(i,j)$ with probability $1/2$, three children of type $(i,j+1)$ with probability $1/12$, and three children of type $(i+1,j)$ with probability $1/12$.

Suppose we would like to determine the distinct elements of Ext.
We consider the family $\mathcal{A}=\mathcal{X}$ (the set of singletons), in which 
\[
(i_1, j_1) \Rightarrow (i_2, j_2) \quad \text{ if and only if } \quad i_1 \leq i_2  \text{ and } j_1 \leq j_2,
\]and whose associated graph $G_{\mathcal{A}}$ is illustrated in Figure \ref{fig:Ex3M} (the edges implied by transitivity are omitted). Note that the family $\mathcal{A}$ is regular; indeed, $(C_1)$ and $(C_3)$ are immediate, $(C_4)$ and $(C_5)$ can be verified easily (for instance by inspecting the graph $G_{\mathcal{A}}$), and $(C_2)$ follows from the fact that the mean number of type-$(i,j)$ offspring of a type-$(i,j)$ parent is $3/2>1$.

\begin{figure}

\begin{tikzpicture}[scale=.5]


\tikzset{vertex/.style = {shape=circle, draw, line width=0.5mm, minimum size=1pt, inner sep=2pt}}

\tikzset{edge/.style = {->,> = latex'}}






%




\node[vertex, minimum size=.05cm] (00) at  (0,0){};
\node[vertex, minimum size=.05cm] (01) at  (0,2){};
\node[vertex, minimum size=.05cm] (02) at  (0,4){};
\node[vertex, minimum size=.05cm] (03) at  (0,6){};
\node[vertex, minimum size=.05cm] (04) at  (0,8){};
\node[vertex, minimum size=.05cm] (05) at  (0,10){};

\node[vertex, minimum size=.05cm] (10) at  (2,0){};
\node[vertex, minimum size=.05cm] (11) at  (2,2){};
\node[vertex, minimum size=.05cm] (12) at  (2,4){};
\node[vertex, minimum size=.05cm] (13) at  (2,6){};
\node[vertex, minimum size=.05cm] (14) at  (2,8){};
\node[vertex, minimum size=.05cm] (15) at  (2,10){};

\node[vertex, minimum size=.05cm] (20) at  (4,0){};
\node[vertex, minimum size=.05cm] (21) at  (4,2){};
\node[vertex, minimum size=.05cm] (22) at  (4,4){};
\node[vertex, minimum size=.05cm] (23) at  (4,6){};
\node[vertex, minimum size=.05cm] (24) at  (4,8){};
\node[vertex, minimum size=.05cm] (25) at  (4,10){};

\node[vertex, minimum size=.05cm] (30) at  (6,0){};
\node[vertex, minimum size=.05cm] (31) at  (6,2){};
\node[vertex, minimum size=.05cm] (32) at  (6,4){};
\node[vertex, minimum size=.05cm] (33) at  (6,6){};
\node[vertex, minimum size=.05cm] (34) at  (6,8){};
\node[vertex, minimum size=.05cm] (35) at  (6,10){};

\node[vertex, minimum size=.05cm] (40) at  (8,0){};
\node[vertex, minimum size=.05cm] (41) at  (8,2){};
\node[vertex, minimum size=.05cm] (42) at  (8,4){};
\node[vertex, minimum size=.05cm] (43) at  (8,6){};
\node[vertex, minimum size=.05cm] (44) at  (8,8){};
\node[vertex, minimum size=.05cm] (45) at  (8,10){};

\node[vertex, minimum size=.05cm] (50) at  (10,0){};
\node[vertex, minimum size=.05cm] (51) at  (10,2){};
\node[vertex, minimum size=.05cm] (52) at  (10,4){};
\node[vertex, minimum size=.05cm] (53) at  (10,6){};
\node[vertex, minimum size=.05cm] (54) at  (10,8){};
\node[vertex, minimum size=.05cm] (55) at  (10,10){};

\draw[edge, line width=0.3mm] (00) to (01);
\draw[edge, line width=0.3mm] (01) to (02);
\draw[edge, line width=0.3mm] (02) to (03);
\draw[edge, line width=0.3mm] (03) to (04);
\draw[edge, line width=0.3mm] (04) to (05);

\draw[edge, line width=0.3mm] (10) to (11);
\draw[edge, line width=0.3mm] (11) to (12);
\draw[edge, line width=0.3mm] (12) to (13);
\draw[edge, line width=0.3mm] (13) to (14);
\draw[edge, line width=0.3mm] (14) to (15);

\draw[edge, line width=0.3mm] (20) to (21);
\draw[edge, line width=0.3mm] (21) to (22);
\draw[edge, line width=0.3mm] (22) to (23);
\draw[edge, line width=0.3mm] (23) to (24);
\draw[edge, line width=0.3mm] (24) to (25);

\draw[edge, line width=0.3mm] (30) to (31);
\draw[edge, line width=0.3mm] (31) to (32);
\draw[edge, line width=0.3mm] (32) to (33);
\draw[edge, line width=0.3mm] (33) to (34);
\draw[edge, line width=0.3mm] (34) to (35);

\draw[edge, line width=0.3mm] (40) to (41);
\draw[edge, line width=0.3mm] (41) to (42);
\draw[edge, line width=0.3mm] (42) to (43);
\draw[edge, line width=0.3mm] (43) to (44);
\draw[edge, line width=0.3mm] (44) to (45);

\draw[edge, line width=0.3mm] (50) to (51);
\draw[edge, line width=0.3mm] (51) to (52);
\draw[edge, line width=0.3mm] (52) to (53);
\draw[edge, line width=0.3mm] (53) to (54);
\draw[edge, line width=0.3mm] (54) to (55);

\draw[edge, line width=0.3mm] (00) to (10);
\draw[edge, line width=0.3mm] (10) to (20);
\draw[edge, line width=0.3mm] (20) to (30);
\draw[edge, line width=0.3mm] (30) to (40);
\draw[edge, line width=0.3mm] (40) to (50);

\draw[edge, line width=0.3mm] (01) to (11);
\draw[edge, line width=0.3mm] (11) to (21);
\draw[edge, line width=0.3mm] (21) to (31);
\draw[edge, line width=0.3mm] (31) to (41);
\draw[edge, line width=0.3mm] (41) to (51);

\draw[edge, line width=0.3mm] (02) to (12);
\draw[edge, line width=0.3mm] (12) to (22);
\draw[edge, line width=0.3mm] (22) to (32);
\draw[edge, line width=0.3mm] (32) to (42);
\draw[edge, line width=0.3mm] (42) to (52);

\draw[edge, line width=0.3mm] (03) to (13);
\draw[edge, line width=0.3mm] (13) to (23);
\draw[edge, line width=0.3mm] (23) to (33);
\draw[edge, line width=0.3mm] (33) to (43);
\draw[edge, line width=0.3mm] (43) to (53);

\draw[edge, line width=0.3mm] (04) to (14);
\draw[edge, line width=0.3mm] (14) to (24);
\draw[edge, line width=0.3mm] (24) to (34);
\draw[edge, line width=0.3mm] (34) to (44);
\draw[edge, line width=0.3mm] (44) to (54);

\draw[edge, line width=0.3mm] (05) to (15);
\draw[edge, line width=0.3mm] (15) to (25);
\draw[edge, line width=0.3mm] (25) to (35);
\draw[edge, line width=0.3mm] (35) to (45);
\draw[edge, line width=0.3mm] (45) to (55);

\draw[line width=0.3mm] (50) to (11,0);
\draw[line width=0.3mm] (51) to (11,2);
\draw[line width=0.3mm] (52) to (11,4);
\draw[line width=0.3mm] (53) to (11,6);
\draw[line width=0.3mm] (54) to (11,8);
\draw[line width=0.3mm] (55) to (11,10);

\draw[line width=0.3mm] (05) to (0,11);
\draw[line width=0.3mm] (15) to (2,11);
\draw[line width=0.3mm] (25) to (4,11);
\draw[line width=0.3mm] (35) to (6,11);
\draw[line width=0.3mm] (45) to (8,11);
\draw[line width=0.3mm] (55) to (10,11);

\end{tikzpicture} \hspace{1cm} 
\begin{tikzpicture}[scale=.5]


\tikzset{vertex/.style = {shape=circle, draw, line width=0.5mm, minimum size=1pt, inner sep=2pt}}

\tikzset{edge/.style = {->,> = latex'}}






%




\node[green, fill=green, vertex, minimum size=.05cm] (00) at  (0,0){};
\node[green, fill=green, vertex, minimum size=.05cm] (01) at  (0,2){};
\node[red, fill=red,vertex, minimum size=.05cm] (02) at  (0,4){};
\node[vertex, minimum size=.05cm] (03) at  (0,6){};
\node[vertex, minimum size=.05cm] (04) at  (0,8){};
\node[blue, fill=blue, vertex, minimum size=.05cm] (05) at  (0,10){};

\node[green, fill=green, vertex, minimum size=.05cm] (10) at  (2,0){};
\node[green, fill=green, vertex, minimum size=.05cm] (11) at  (2,2){};
\node[red, fill=red,vertex, minimum size=.05cm] (12) at  (2,4){};
\node[blue, fill=blue,vertex, minimum size=.05cm] (13) at  (2,6){};
\node[vertex, minimum size=.05cm] (14) at  (2,8){};
\node[vertex, minimum size=.05cm] (15) at  (2,10){};

\node[green, fill=green, vertex, minimum size=.05cm] (20) at  (4,0){};
\node[green, fill=green, vertex, minimum size=.05cm] (21) at  (4,2){};
\node[red, fill=red,vertex, minimum size=.05cm] (22) at  (4,4){};
\node[red, fill=red,vertex, minimum size=.05cm] (23) at  (4,6){};
\node[vertex, minimum size=.05cm] (24) at  (4,8){};
\node[vertex, minimum size=.05cm] (25) at  (4,10){};

\node[green, fill=green, vertex, minimum size=.05cm] (30) at  (6,0){};
\node[green, fill=green, vertex, minimum size=.05cm] (31) at  (6,2){};
\node[vertex, minimum size=.05cm] (32) at  (6,4){};
\node[red, fill=red,vertex, minimum size=.05cm] (33) at  (6,6){};
\node[vertex, minimum size=.05cm] (34) at  (6,8){};
\node[vertex, minimum size=.05cm] (35) at  (6,10){};

\node[green, fill=green, vertex, minimum size=.05cm] (40) at  (8,0){};
\node[green, fill=green, vertex, minimum size=.05cm] (41) at  (8,2){};
\node[blue, fill=blue,vertex, minimum size=.05cm] (42) at  (8,4){};
\node[red, fill=red,vertex, minimum size=.05cm] (43) at  (8,6){};
\node[red, fill=red,vertex, minimum size=.05cm] (44) at  (8,8){};
\node[vertex, minimum size=.05cm] (45) at  (8,10){};

\node[green, fill=green, vertex, minimum size=.05cm] (50) at  (10,0){};
\node[green, fill=green, vertex, minimum size=.05cm] (51) at  (10,2){};
\node[vertex, minimum size=.05cm] (52) at  (10,4){};
\node[vertex, minimum size=.05cm] (53) at  (10,6){};
\node[red, fill=red,vertex, minimum size=.05cm] (54) at  (10,8){};
\node[vertex, minimum size=.05cm] (55) at  (10,10){};

\draw[edge, line width=0.3mm] (00) to (01);
\draw[edge, line width=0.3mm] (01) to (02);
\draw[edge, line width=0.3mm] (02) to (03);
\draw[edge, line width=0.3mm] (03) to (04);
\draw[edge, line width=0.3mm] (04) to (05);

\draw[edge, line width=0.3mm] (10) to (11);
\draw[edge, line width=0.3mm] (11) to (12);
\draw[edge, line width=0.3mm] (12) to (13);
\draw[edge, line width=0.3mm] (13) to (14);
\draw[edge, line width=0.3mm] (14) to (15);

\draw[edge, line width=0.3mm] (20) to (21);
\draw[edge, line width=0.3mm] (21) to (22);
\draw[edge, line width=0.3mm] (22) to (23);
\draw[edge, line width=0.3mm] (23) to (24);
\draw[edge, line width=0.3mm] (24) to (25);

\draw[edge, line width=0.3mm] (30) to (31);
\draw[edge, line width=0.3mm] (31) to (32);
\draw[edge, line width=0.3mm] (32) to (33);
\draw[edge, line width=0.3mm] (33) to (34);
\draw[edge, line width=0.3mm] (34) to (35);

\draw[edge, line width=0.3mm] (40) to (41);
\draw[edge, line width=0.3mm] (41) to (42);
\draw[edge, line width=0.3mm] (42) to (43);
\draw[edge, line width=0.3mm] (43) to (44);
\draw[edge, line width=0.3mm] (44) to (45);

\draw[edge, line width=0.3mm] (50) to (51);
\draw[edge, line width=0.3mm] (51) to (52);
\draw[edge, line width=0.3mm] (52) to (53);
\draw[edge, line width=0.3mm] (53) to (54);
\draw[edge, line width=0.3mm] (54) to (55);

\draw[edge, line width=0.3mm] (00) to (10);
\draw[edge, line width=0.3mm] (10) to (20);
\draw[edge, line width=0.3mm] (20) to (30);
\draw[edge, line width=0.3mm] (30) to (40);
\draw[edge, line width=0.3mm] (40) to (50);

\draw[edge, line width=0.3mm] (01) to (11);
\draw[edge, line width=0.3mm] (11) to (21);
\draw[edge, line width=0.3mm] (21) to (31);
\draw[edge, line width=0.3mm] (31) to (41);
\draw[edge, line width=0.3mm] (41) to (51);

\draw[edge, line width=0.3mm] (02) to (12);
\draw[edge, line width=0.3mm] (12) to (22);
\draw[edge, line width=0.3mm] (22) to (32);
\draw[edge, line width=0.3mm] (32) to (42);
\draw[edge, line width=0.3mm] (42) to (52);

\draw[edge, line width=0.3mm] (03) to (13);
\draw[edge, line width=0.3mm] (13) to (23);
\draw[edge, line width=0.3mm] (23) to (33);
\draw[edge, line width=0.3mm] (33) to (43);
\draw[edge, line width=0.3mm] (43) to (53);

\draw[edge, line width=0.3mm] (04) to (14);
\draw[edge, line width=0.3mm] (14) to (24);
\draw[edge, line width=0.3mm] (24) to (34);
\draw[edge, line width=0.3mm] (34) to (44);
\draw[edge, line width=0.3mm] (44) to (54);

\draw[edge, line width=0.3mm] (05) to (15);
\draw[edge, line width=0.3mm] (15) to (25);
\draw[edge, line width=0.3mm] (25) to (35);
\draw[edge, line width=0.3mm] (35) to (45);
\draw[edge, line width=0.3mm] (45) to (55);

\draw[line width=0.3mm] (50) to (11,0);
\draw[line width=0.3mm] (51) to (11,2);
\draw[line width=0.3mm] (52) to (11,4);
\draw[line width=0.3mm] (53) to (11,6);
\draw[line width=0.3mm] (54) to (11,8);
\draw[line width=0.3mm] (55) to (11,10);

\draw[line width=0.3mm] (05) to (0,11);
\draw[line width=0.3mm] (15) to (2,11);
\draw[line width=0.3mm] (25) to (4,11);
\draw[line width=0.3mm] (35) to (6,11);
\draw[line width=0.3mm] (45) to (8,11);
\draw[line width=0.3mm] (55) to (10,11);

\end{tikzpicture}

\caption{\label{fig:Ex3M} Left panel: The graph $G_{\mathcal{A}}$ in Example 3. Right panel: The graph $G_{\mathcal{A}}$ in which particular subsets of vertices are highlighted.}

\end{figure}
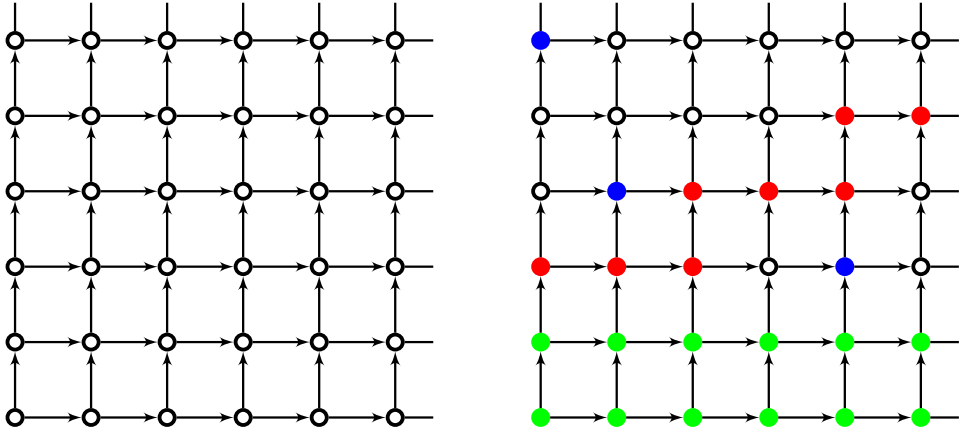

In this example, the primitive subsets are the subsets of $\mathcal{X}$ in which no element is strictly greater (componentwise) than any other. More formally, 
\[
\textswab{P}_{\mathcal{A}} = \{ A \subset \mathcal{X} : \nexists  (i_1,j_1), (i_2,j_2) \in A \text{ with }  i_1 \leq i_2  \text{ and } j_1 \leq j_2 \}. 
\]
The set of blue nodes in  Figure \ref{fig:Ex3M} is an example of a primitive subset. Note that every element of $\textswab{P}_{\mathcal{A}}$ is a finite subset, and therefore $\textswab{P}_{\mathcal{A}}$ is countable.
The set of representatives of pure ascending chains is
\begin{equation}\label{Ex3M1}
\textswab{C}_{\mathcal{A}} =\left\{ (i,j): 1\leq i\leq k, j\geq 1 \right\}_{k\in \mathbb{N}}  \cup\left\{  (i,j): i\geq 1,1\leq j\leq k\right\}_{k\in \mathbb{N}}\cup \mathcal{X} \cup \emptyset.
\end{equation}
To understand how this expression for $\textswab{C}_{\mathcal{A}}$ is obtained, observe that there are essentially three kinds of ascending chains: those that take infinitely many steps upwards while only taking finitely many steps to the right (representatives of these chains are given in the first term of \eqref{Ex3M1}), those that take only finitely many steps upwards while taking infinitely many steps to the right (representatives of these chains are given in the second term of \eqref{Ex3M1}; the set of green nodes in  Figure \ref{fig:Ex3M} corresponds to $k=2$), and those that take both infinitely many steps upwards and infinitely many steps to the right (these chains have just a single representative $\mathcal{X}$; one such path is illustrated in red in Figure \ref{fig:Ex3M}). 

By Proposition~\ref{pro:extAeq}  the set of distinct extinction probability vectors is 
$$\text{Ext}=\text{Ext}(\mathcal{A})= \left\{\vc q\left(\textstyle\bigcup_{(i,j) \in I \cup J} (i,j) \right): (I,J)\in \textswab{I}_\mathcal{A} \right\},$$
where
\begin{align*}
\textswab{J}_{\mathcal{A}} &= \{ (I,J) \in (\textswab{P}_{\mathcal{A}} \times \textswab{C}_{\mathcal{A}}): I \cap J = \emptyset, (J \backslash I^+)^+=J \} \\
&=\{ (I,J) \in (\textswab{P}_{\mathcal{A}} \times \textswab{C}_{\mathcal{A}}): I \cap J = \emptyset \},
\end{align*}
and the final equality follows from the fact that for every $I\in\textswab{P}_{\mathcal{A}}$, $I^+$ is a finite set. One element $(I,J)$ of $\textswab{J}_{\mathcal{A}}$ is formed by letting $I$ and $J$ be the set of blue and green nodes respectively in Figure \ref{fig:Ex3M}.
Because $\textswab{P}_{\mathcal{A}}$ and $\textswab{C}_{\mathcal{A}}$ are both countably infinite, by Theorem \ref{th:finiteinfinite2}, ${\rm Ext}$ contains a countably infinite number of distinct elements.
We have thus constructed an example with ascending chains in which $|\textswab{P}_{\mathcal{A}}|=|{\rm Ext}(\mathcal{A}) |$. 


\bigskip
\paragraph{\textbf{Example 3: A BRW on a modified binary tree.}}

Consider the modification of an oriented binary tree which is illustrated in Figure~\ref{fig:uncountable} and is formally constructed as follows. Let $Z:=\bigcup_{i=0}^{+\infty} \{-1,+1\}^i$ denote the set of vertices,
 where $\{-1,+1\}^0=\{\emptyset\}$
 represents the root. Note that every vertex is a finite sequence of $-1$ and $+1$. 
 A planar representation of this set is given by the map $\gamma:Z \mapsto\mathbb{R}^2$ where
 $\gamma(\{\emptyset\})=(0,0)$ and $\gamma(\{\alpha_1,\alpha_2,\ldots,\alpha_n\})=
 \big ( \sum_{i=1}^n \alpha_i 3^{-i}, n \big )$, for $n\geq 1$. 
 Henceforth, when we speak of ``left'' and ``right'' we refer to the first coordinate in this planar representation.
 Given a vertex $\{\emptyset\}$ or $\{\alpha_1,\dots,\alpha_n\}$
 with $n\geq 1$, we define the (oriented) edges as follows
 \[
 \begin{matrix}
  (\{\beta_i\}_{i=1}^m, \{\alpha_i\}_{i=1}^n)\\
    \Updownarrow\\
    \begin{cases}
     \alpha_i=\beta_i\; \forall i \le m.&\text{if } m+1=n \ge 1
     \\
 \beta_i=\alpha_i\; \forall i \le n-1,\, \beta_n=-\alpha_n=1, \,
     \beta_i=-1\; \forall i >n& \text{if } m \ge n \ge 1.
 \\
  \end{cases}
  \end{matrix}
 \]
  Roughly speaking, the first line defines the usual upward edges in the binary tree (where each parent has exactly two children). The second line 
 draws lateral edges to each point from the sibling on its right (if any) and from each descendent of this siblings in such a way that the resulting graph is isomorphic to a planar graph (see Figure~\ref{fig:uncountable}). We observe that there are no lateral edges pointing to the right, and that from every vertex $\{\beta_i\}_{i=1}^m$ such that $\beta_i=1$ for some $i$, there is always a lateral edge pointing to the left (to the sibling if $\beta_n=1$, or to the sibling of some ancestor if $\beta_n=-1$). Denote this collection of edges by $E_Z$; it is easy to see that there are no cycles. 
 
 \begin{figure}

\centering

\begin{tikzpicture}[scale=.67]


\tikzset{vertex/.style = {shape=circle, draw, line width=0.5mm, minimum size=1pt, inner sep=2pt}}

\tikzset{edge/.style = {->,> = latex'}}






%




\node[vertex, minimum size=.05cm] (01) at  (0,0){};

\node[vertex, minimum size=.1cm] (11) at  (-6,2){};

\node[vertex, minimum size=.1cm] (12) at  (6,2){};

\node[vertex, minimum size=.1cm] (21) at  (-8,4){};

\node[vertex, minimum size=.1cm] (22) at  (-4,4){};

\node[vertex, minimum size=.1cm] (23) at  (4,4){};

\node[vertex, minimum size=.1cm] (24) at  (8,4){};

\node[vertex, minimum size=.1cm] (31) at  (-9,6){};

\node[vertex, minimum size=.1cm] (32) at  (-7,6){};

\node[vertex, minimum size=.1cm] (33) at  (-5,6){};

\node[vertex, minimum size=.1cm] (34) at  (-3,6){};

\node[vertex, minimum size=.1cm] (35) at  (3,6){};

\node[vertex, minimum size=.1cm] (36) at  (5,6){};

\node[vertex, minimum size=.1cm] (37) at  (7,6){};

\node[vertex, minimum size=.1cm] (38) at  (9,6){};

\node[vertex, minimum size=.1cm] (41) at  (-9.5,8){};

\node[vertex, minimum size=.1cm] (42) at  (-8.5,8){};

\node[vertex, minimum size=.1cm] (43) at  (-7.5,8){};

\node[vertex, minimum size=.1cm] (44) at  (-6.5,8){};

\node[vertex, minimum size=.1cm] (45) at  (-5.5,8){};

\node[vertex, minimum size=.1cm] (46) at  (-4.5,8){};

\node[vertex, minimum size=.1cm] (47) at  (-3.5,8){};

\node[vertex, minimum size=.1cm] (48) at  (-2.5,8){};

\node[vertex, minimum size=.1cm] (49) at  (2.5,8){};

\node[vertex, minimum size=.1cm] (410) at  (3.5,8){};

\node[vertex, minimum size=.1cm] (411) at  (4.5,8){};

\node[vertex, minimum size=.1cm] (412) at  (5.5,8){};

\node[vertex, minimum size=.1cm] (413) at  (6.5,8){};

\node[vertex, minimum size=.1cm] (414) at  (7.5,8){};

\node[vertex, minimum size=.1cm] (415) at  (8.5,8){};

\node[vertex, minimum size=.1cm] (416) at  (9.5,8){};

\draw[line width=0.3mm] (41) to (-9.65,9);

\draw[line width=0.3mm] (41) to (-9.35,9);

\draw[line width=0.3mm] (42) to (-8.65,9);

\draw[line width=0.3mm] (42) to (-8.35,9);

\draw[line width=0.3mm] (43) to (-7.65,9);

\draw[line width=0.3mm] (43) to (-7.35,9);

\draw[line width=0.3mm] (44) to (-6.65,9);

\draw[line width=0.3mm] (44) to (-6.35,9);

\draw[line width=0.3mm] (45) to (-5.65,9);

\draw[line width=0.3mm] (45) to (-5.35,9);

\draw[line width=0.3mm] (46) to (-4.65,9);

\draw[line width=0.3mm] (46) to (-4.35,9);

\draw[line width=0.3mm] (47) to (-3.65,9);

\draw[line width=0.3mm] (47) to (-3.35,9);

\draw[line width=0.3mm] (48) to (-2.65,9);

\draw[line width=0.3mm] (48) to (-2.35,9);

\draw[line width=0.3mm] (49) to (2.65,9);

\draw[line width=0.3mm] (49) to (2.35,9);

\draw[line width=0.3mm] (410) to (3.65,9);

\draw[line width=0.3mm] (410) to (3.35,9);

\draw[line width=0.3mm] (411) to (4.65,9);

\draw[line width=0.3mm] (411) to (4.35,9);

\draw[line width=0.3mm] (412) to (5.65,9);

\draw[line width=0.3mm] (412) to (5.35,9);

\draw[line width=0.3mm] (413) to (6.65,9);

\draw[line width=0.3mm] (413) to (6.35,9);

\draw[line width=0.3mm] (414) to (7.65,9);

\draw[line width=0.3mm] (414) to (7.35,9);

\draw[line width=0.3mm] (415) to (8.65,9);

\draw[line width=0.3mm] (415) to (8.35,9);

\draw[line width=0.3mm] (416) to (9.65,9);

\draw[line width=0.3mm] (416) to (9.35,9);

\draw[edge, line width=0.3mm] (01) to (11);

\draw[edge, line width=0.3mm] (01) to (12);

\draw[edge, line width=0.3mm] (11) to (21);

\draw[edge, line width=0.3mm] (11) to (22);

\draw[edge, line width=0.3mm] (12) to (23);

\draw[edge, line width=0.3mm] (12) to (24);

\draw[edge, line width=0.3mm] (21) to (31);

\draw[edge, line width=0.3mm] (21) to (32);

\draw[edge, line width=0.3mm] (22) to (33);

\draw[edge, line width=0.3mm] (22) to (34);

\draw[edge, line width=0.3mm] (23) to (35);

\draw[edge, line width=0.3mm] (23) to (36);

\draw[edge, line width=0.3mm] (24) to (37);

\draw[edge, line width=0.3mm] (24) to (38);

\draw[edge, line width=0.3mm] (31) to (41);

\draw[edge, line width=0.3mm] (31) to (42);

\draw[edge, line width=0.3mm] (32) to (43);

\draw[edge, line width=0.3mm] (32) to (44);

\draw[edge, line width=0.3mm] (33) to (45);

\draw[edge, line width=0.3mm] (33) to (46);

\draw[edge, line width=0.3mm] (34) to (47);

\draw[edge, line width=0.3mm] (34) to (48);

\draw[edge, line width=0.3mm] (35) to (49);

\draw[edge, line width=0.3mm] (35) to (410);

\draw[edge, line width=0.3mm] (36) to (411);

\draw[edge, line width=0.3mm] (36) to (412);

\draw[edge, line width=0.3mm] (37) to (413);

\draw[edge, line width=0.3mm] (37) to (414);

\draw[edge, line width=0.3mm] (38) to (415);

\draw[edge, line width=0.3mm] (38) to (416);

\draw[edge, line width=0.3mm] (12) to (11);

\draw[edge, line width=0.3mm] (23) to (11);

\draw[edge, line width=0.3mm] (22) to (21);

\draw[edge, line width=0.3mm] (24) to (23);

\draw[edge, line width=0.3mm] (32) to (31);

\draw[edge, line width=0.3mm] (33) to (21);

\draw[edge, line width=0.3mm] (34) to (33);

\draw[edge, line width=0.3mm] (35) to (11);

\draw[edge, line width=0.3mm] (36) to (35);

\draw[edge, line width=0.3mm] (37) to (23);

\draw[edge, line width=0.3mm] (38) to (37);

\draw[edge, line width=0.3mm] (42) to (41);

\draw[edge, line width=0.3mm] (43) to (31);

\draw[edge, line width=0.3mm] (44) to (43);

\draw[edge, line width=0.3mm] (45) to (21);

\draw[edge, line width=0.3mm] (46) to (45);

\draw[edge, line width=0.3mm] (47) to (33);

\draw[edge, line width=0.3mm] (48) to (47);

\draw[edge, line width=0.3mm] (49) to (11);

\draw[edge, line width=0.3mm] (410) to (49);

\draw[edge, line width=0.3mm] (411) to (35);

\draw[edge, line width=0.3mm] (412) to (411);

\draw[edge, line width=0.3mm] (413) to (23);

\draw[edge, line width=0.3mm] (414) to (413);

\draw[edge, line width=0.3mm] (415) to (37);

\draw[edge, line width=0.3mm] (416) to (415);

\end{tikzpicture}

\caption{\label{fig:uncountable} The modified binary tree.}

\end{figure}
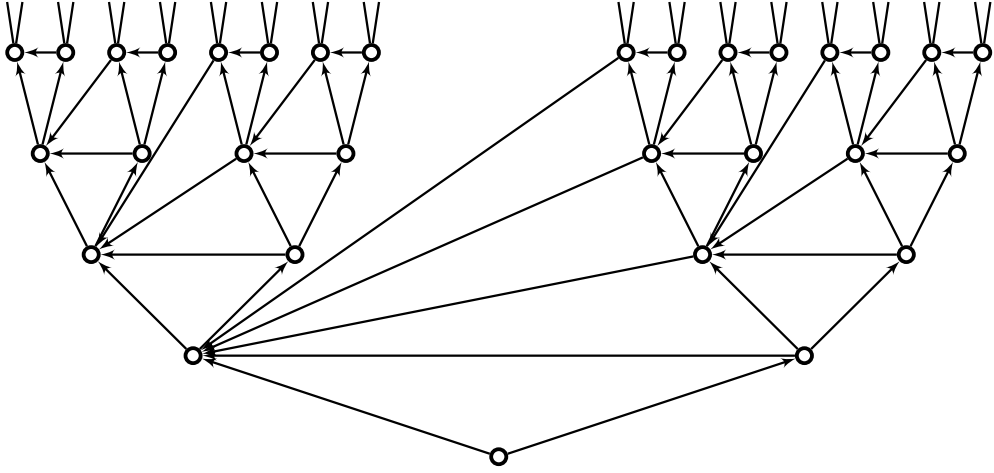
 
We can define an MGWBP and a regular family $\mathcal{A}$ with $G_{\mathcal{A}}=(Z,E_Z)$ in a similar manner as Example 2; however we do not provide an explicit construction here. Note that the graph $(Z,E_Z)$ satisfies the assumptions of Lemma~\ref{lem:canonic}, hence such an MGWBP and regular family $\mathcal{A}$ must exist. 
For simplicity, below we will assume that, as in Example~2, the typeset in our MGWBP is $\mathcal{X}=Z$ and the regular family is $\mathcal{A}=\mathcal{X}$ (the set of singletons).

In this example the set of primitive subsets is $\textswab{P}_{\mathcal{A}} = \mathcal{X}$, i.e., the set of singletons. This is because, by construction, for any $x,y \in \mathcal{A
}$, either $x \Rightarrow y$ or $y \Rightarrow x$.
To identify $\textswab{C}_{\mathcal{A}}$ note that each pure ascending chain corresponds to a ray in the tree, which can be represented by its end point $\{\alpha_i\}_{i=1}^\infty$. The representative of the pure ascending chain is the set of vertices that lie to the right of its corresponding ray.  More formally, for each ray $\{\alpha_i\}_{i=1}^\infty$, we let
$$
h(\{\alpha_i\}_{i=1}^\infty):=\{\emptyset\} \cup \bigcup_{n=1}^\infty \left\{\{\beta_i\}_{i=1}^n: \sum_{i=1}^n 3^{-i} \beta_i\geq \sum_{i=1}^n 3^{-i} \alpha_i\right\}$$
denote the set of vertices to the right of the ray $\{\alpha_i\}_{i=1}^\infty$.
The set of representatives of pure ascending chains is then
\begin{equation}\label{Ex2M1}
\textswab{C}_{\mathcal{A}} =\left\{  h(\{\alpha_i\}_{i=1}^\infty): \{\alpha_i\}_{i=1}^\infty\in \{-1,+1\}^\infty\right\}.\end{equation}
Note that the set $\textswab{C}_{\mathcal{A}}$ is uncountable because there are uncountably many rays.
Here, we have 
\begin{align}
\textswab{J}_{\mathcal{A}} &= \{ (I,J) \in (\textswab{P}_{\mathcal{A}} \times \textswab{C}_{\mathcal{A}}): I \cap J = \emptyset, (J \backslash I^+)^+=J \}  \nonumber \\
&=\{ (I,\emptyset): I \in \textswab{P}_{\mathcal{A}} \}\cup \{ (\emptyset,J): J \in  \textswab{C}_{\mathcal{A}} \}.\label{Ex2M2}
\end{align}
To understand Equation \eqref{Ex2M2}, note that if $I\equiv \{ x\} \in \textswab{P}_\mathcal{A}$ and $J \in \textswab{C}_\mathcal{A}$, then either $x \in J$, in which case $I \cap J \neq \emptyset$, or $x \notin J$, in which case if $y \in J$ then $\{y\} \Rightarrow \{x \}$, thus $J\subseteq I^+$ and therefore $(J \backslash I^+)^+=\emptyset$; thus if $I \cap J = \emptyset$ and  $(J \backslash I^+)^+=J$ then either $I=\emptyset$ or $J=\emptyset$. By Proposition~  \ref{pro:extAeq},  the set of distinct extinction probability vectors is then given by
$$\text{Ext}=\text{Ext}(\mathcal{A})= \{\vc q(I):I \in \textswab{P}_{\mathcal{A}} \cup  \textswab{C}_{\mathcal{A}} \}.
$$
Because $\textswab{C}_{\mathcal{A}}$ is uncountable, by Theorem \ref{th:finiteinfinite2}\emph{(v)}, ${\rm Ext}$ contains uncountably many distinct elements. In addition, because $\mathcal{X}$, and therefore $\textswab{P}_{\mathcal{A}}$, is countable,
we have thus constructed an example in which $|\textswab{P}_{\mathcal{A}}|<|{\rm Ext}(\mathcal{A})|$.
Note that  in this example the inequality in Equation \eqref{bound} is strict.

\section{Open questions}
\label{sec:open}

The results in this paper motivate several open questions. 
Here we consider a very general setting, in which we observe a wide variety of behaviours; for instance, in Example~1, there can be \emph{any} number of distinct extinction probability vectors.
We can then ask whether we observe similarly rich behaviour in more homogeneous settings, such as transitive or quasi-transitive processes. We believe that the answer is negative. In particular, for quasi-transitive BRWs on a graph $G$, like those considered in \cite{Stacey2003} (see also the examples  in \cite{Can15}),
we conjecture that either (i) $|Ext|=1$, in which case $\vc q=\vc{\tilde q}=\bm{1}$, (ii) $|\text{Ext}|= 2$, in which case $\bm{q}=\bm{\tilde q}<\bm{1}$ or $\bm{q}<\bm{\tilde q}=\bm{1}$, or (iii) $|\text{Ext}|$ is uncountable, such as in \cite[Section 3.1]{cf:BZ2020}. Furthermore, we conjecture that, if the process is quasi-transitive, then (iii) can only occur when 
it is nonamenable (see \cite[Section 2.1]{cf:BZ14-SLS} for the definition). 
Note that, without the quasi-transitivity assumption, the MGWBP can exhibit an uncountable number of extinction probability vectors even if both the underlying graph and the process itself are amenable (see Example 1 with $r>1$).
We believe that similar results also hold for irreducible BRWs in an i.i.d. random environment such as those considered in \cite{Com2007, Machado2003}.

Moreover, the exact location of the extinction probability vectors $\vc q(A)$ (different from $\vc q$ and $\vc{\tilde{q}}$) in the set of fixed points $S$ is yet to be identified. In  \cite{Bra18}, the authors conjecture that the ``corners" of the set $S$ correspond to extinction probability vectors $\bm{q}(A)$; see  \cite[Conjecture 5.1]{Bra18} for a precise statement. In addition, it has been shown that $S$ can contain (uncountably many)
fixed points which are not extinction probability vectors; see for instance \cite[Example 3.6]{cf:BZ2015}. Under particular assumptions (i.e. in an irreducible LHBP), it has been shown that there is a continuum of fixed points between $\vc q$ and $\vc{\tilde{q}}$ and there are no fixed points between $\vc{\tilde{q}}$ and $\vc 1$; see \cite[Theorem 1]{Bra19}. Here we prove that there are no fixed points between $\vc{\tilde{q}}$ and $\vc 1$  in the general irreducible setting (Corollary~\ref{cor:0}); we believe that, like in the setting of \cite{Bra19}, there is a continuum of fixed points between $\vc q$ and $\tilde{\vc q}$, however this is yet to be established rigorously. 
Another closely related question is the following: is it possible to have $|\text{Ext}|<|S|<+\infty$? 




Finally, here we focused on the distinct elements of Ext$(\mathcal{A})$, where $\mathcal{A}$ is a regular family. In Example 1, we showed that  Ext$\equiv$Ext$(\mathcal{A})$, and therefore the study of Ext could be reduced to that of Ext$(\mathcal{A})$ without loosing any information. More generally we may ask under which conditions there exists a regular family $\mathcal{A}$ such that Ext$\equiv$Ext$(\mathcal{A})$, and if one exists, can it be described?

\section{Proofs}
\label{sec:proofs}

\begin{proof}[Proof of Proposition~\ref{pro:maximal}]
The usual way to identify the maximal and minimal fixed points of a continuous nondecreasing function in a (partially ordered) set is to
generate iteratively two sequences starting from the maximal and minimal elements of the set (if available).

More precisely, observe that if we let $\bm{\widehat G}^{(A,n+1)}(\bm{s})=\bm{\widehat G}^{(A,n)}(\bm{\widehat G}^{(A)}(\bm{s}))$, then ${\widehat G}_x^{(A,n)}(\bm{1})=\pr_x(\bm{\hat{Z}}_n^{(A)}<\infty)$, that is, ${\widehat G}_x^{(A,n)}(\bm{1})$ is the probability that, given $\bm{\widehat Z}_0=\vc e_x$, no type $y\in A$ individual has been born into the population before generation $n$.
We then have $= {\widehat G}_x^{(A,n)}(\bm{1})\searrow \bm{q}^{(0)}_x(A)$ as $n \to +\infty$. 
The fact that $\bm{q}^{(0)}(A)$ is the unique componentwise \emph{maximal} element of the set $\widehat S^{(A)}$ then follows from the fact that $\bm{\widehat G}^{(A)}(\bm{s})$ (and therefore its iterates) are increasing in $\bm{s}$.

Similarly, ${\widehat G}_x^{(A,n)}(\bm{0})=\pr_x(\bm{\hat{Z}}_n^{(A)}=0)= q_x^{(n)}(\mathcal{X},A)$, and the limit of this nondecreasing sequence (namely $\bm{q}(\mathcal{X},A)$) is necessarily the minimal element of $\widehat{S}^{(A)}$.
\end{proof}

\begin{proof}[Proof of Theorem~\ref{LargeLoc}]

\emph{(i)}. Let us fix $\bm{s}$ such that $\bm{s} \le \bm{G}(\bm{s})$ and suppose 
$s_x<1$ for some $x\in\mathcal{X}$.

Define $\bm{\widetilde G}^{(x)} \colon [0,1]^{\mathcal{X}} \to [0,1]^{\mathcal{X}}$ such that 
\[
\widetilde G^{(x)}_y( \bm{u}) = \begin{cases}
u_x, \quad &y=x, \\
G_y(\bm{u}), \quad &\text{otherwise}.
\end{cases}
\]
Observe that $\bm{\widetilde G}^{(x)}(\cdot)$ is the generating function of the original process modified so that all type-$x$ individuals are \emph{frozen} (at each generation they produce a single copy of themselves).
By induction, for any $n\geq 0$, we have $\bm{s} 
\le
\bm{ \widetilde G}^{(x,n)}( \bm{s})$, which implies $\bm{s} 
\le \lim_{n \to \infty} \bm{ \widetilde G}^{(x,n)}( \bm{s})$. By monotonicity of $\bm{G}(\cdot)$, this leads to $\bm{G}(\bm{s})\le \bm{G}( \lim_{n \to \infty} \bm{ \widetilde G}^{(x,n)}( \bm{s}))$, which implies
\begin{equation}\label{eqGt}
\bm{s} 
\le
\bm{G}(\lim_{n \to \infty} \bm{ \widetilde G}^{(x,n)}( \bm{s})).
\end{equation}
Moreover, the function 
\begin{equation}\label{eqphi}
\phi(s_x)\colon={G}_x(\lim_{n \to \infty} \bm{ \widetilde G}^{(x,n)}(1, \dots, 1, s_x , 1,1,\dots ))
\end{equation}
is the (possibly defective) generating function of the asymptotic number of frozen type-$x$ individuals in the modified process when we start with a single type-$x$ individual in generation 0, and we freeze all type-$x$ individuals after generation $1$. 
If we let this asymptotic number of frozen individuals be $Y_1$ and then repeat these steps, with the initial number of type-$x$ individuals now being $Y_1$, to obtain $Y_2$ and so on, then we obtain a (possibly defective) Galton-Watson process $\{Y_k \}_{k \geq 0}$.
This process is referred to as the \emph{embedded type-$x$ process}, and it is known that
the probability of extinction in $\{Y_k\}$ is ${q_x(\{x\})}$
(see for instance the proof of \cite[Theorem 4.1]{cf:Z1}).
In addition, because $\{\bm{Z}_n \}$ is 
non-singular, $\{Y_k \}$ is non-singular, which means that for any $\varepsilon>0$ and $N<\infty$ there exists $K$ such that 
\begin{equation}\label{qtl}
1 - q_x(\{x\}) - \varepsilon\leq \mbP(Y_k > N) \leq 1 -  q_x(\{x\}) + \varepsilon,
\end{equation}
for all $k \geq K$.
Combining \eqref{eqGt}, \eqref{eqphi} and \eqref{qtl}, we then have $s_x\leq \phi(s_x)$, and for all $k \geq K$,
\begin{align*}
s_x &\le
 \underbrace{\phi \circ \ldots \circ \phi}_k(s_x) = \mbE \big( s_x^{Y_k} \big)  \\
&\leq (s_x)^N (1-q_x(\{x\}) + \varepsilon) + q_x(\{x\}) + \varepsilon.
\end{align*}
For any $\eta >0$ we may then choose $\varepsilon< \eta/2$ and $N$ large enough so that $(s_x)^N < \eta/2$. For these values of $\varepsilon$ and $N$ we can then choose $k$ sufficiently large so that \eqref{qtl} holds. Taking $\eta \downarrow 0$ we then obtain $s_x \leq q_x(\{x\})$. 


\emph{(ii)}. It is not difficult to prove (see for instance the \textit{maximum principle}  \cite[Proposition 2.4]{cf:BZ14-SLS}) that if $s_x<1$ then
$s_y<1$ for all $y \to x$. The previous
part of the theorem yields the claim.

\emph{(iii)}.
In the irreducible case $y \to x$ for all $y \in \mathcal{X}$. Whence $\bm{s} \neq \bm{1}$ implies $s_x<1$ for all $x \in \mathcal{X}$. Again, the first part of the theorem yields the claim because
$\widetilde q_x=q_x(\{x\})$ (see \cite[Corollary 4.1]{Bra18}).
\end{proof}

\begin{proof}[Proof of Corollary~\ref{cor:0}]
 We only prove the equality $\textrm{Ext}=S$ since the rest follows trivially from
 Theorem~\ref{LargeLoc}. 
If $\bm{s}=\bm{1}$ then there is nothing to prove; otherwise consider the (non-empty) set $A:=\{x \in \mathcal{X}\colon s_x<1\}$; we prove that $\bm{s}=\bm{q}(A)$ (which shows that any fixed point is an extinction probability vector). First,  by definition of $A$ and by the \textit{maximum principle} \cite[Proposition 2.4]{cf:BZ14-SLS}, there are no $y \in A^c$ and $x \in A$ such that $y \to x$. Therefore $q_y(A)=1=s_y$ for all $y \in A^c$. On the other hand, if $x\in A$ then $\bm{q}(A) \leq \bm{q}(\{x\})$; moreover
 by Theorem~\ref{LargeLoc}\emph{(i)}, $q_x\leq s_x\leq q_x(\{x\})$, and we also have
  $q_x \le q_x(A)
 \le q_x(\{x\})$ for all $x \in A$, which yields the conclusion.
\end{proof}

\begin{proof}[Proof of Theorem~\ref{th:1}]
We start by proving the equivalence \emph{(i)} $\Leftrightarrow$ \emph{(iii)}.
Theorem 2.4 in \cite{cf:BZ2020} implies that,
for every fixed point $\vc s$,
$q_x(B)>s_x$ for some $x\in \mathcal{X}$ 
if and only if $q_y^{(0)}(B)>s_y$
for some $y \in \mathcal{X}$.
 It is enough to take
$\bm{s}=\bm{q}(A)$.

The implication {\emph{(iii)} $\Rightarrow$ \emph{(iv)}} is trivial, since the probability of survival in $A$ is strictly larger than the probability of visiting $B$. The implications
{\emph{(iv)} $\Rightarrow$ \emph{(v)}} and {\emph{(vi)} $\Rightarrow$ \emph{(i)}}
are also straightforward.


We now prove that {\emph{(v)} $\Rightarrow$ \emph{(vi)}}. Suppose $\mbP_x(\mathcal{S}(A) \cap \mathcal{E}(B))>0$ and fix $x$ as the type of the initial individual.
Let $\mathcal{F}_n$ denote the history of the process up to generation $n$ and observe that 
\begin{align*}
M_n(A) &:= \mbP_x(\mathcal{E}(A) | \mathcal{F}_n )=\mbP_x(\mathcal{E}(A) | \vc Z_n ) = \bm{q}(A)^{\bm{Z}_n} \\
M_n(B) &:= \mbP_x(\mathcal{E}(B) | \mathcal{F}_n )=\mbP_x(\mathcal{E}(B) | \vc Z_n )  = \bm{q}(B)^{\bm{Z}_n} 
\end{align*}
are martingales.
By Doob's martingale convergence theorem $M_n(A) \to \mbP_x(\mathcal{E}(A) | \mathcal{F}_\infty ) = \mathds{1}_{\mathcal{E}(A)}$ as $n \to \infty$, with the same holding for extinction in $B$.
Thus, by assumption
\begin{equation}\label{Eqn1}
\mbP_x(\mathcal{S}(A) \cap \mathcal{E}(B))=\mbP_x\bigg(\lim_{n \to \infty} \bm{q}(A)^{\bm{Z}_n}=0, \,\lim_{n \to \infty} \bm{q}(B)^{\bm{Z}_n}=1\bigg)>0. 
\end{equation}
Now, suppose by contradiction that there exists $c>0$ such that 
\begin{equation}\label{Eqn2}
{1-q_i(B)}\geq c({1-q_i(A)}) 
\end{equation}
uniformly in $i \in \mathcal{X}$.
Then,
\begin{align}
\bm{q}(B)^{\bm{Z}_n} &= \prod_{i \in \mathcal{X}} (1-(1-q_i(B)))^{Z_{n,i}} \nonumber \\
&\leq  \prod_{i \in \mathcal{X}} (1-c(1-q_i(A)))^{Z_{n,i}} \nonumber \\
&\leq \exp\bigg\{ -c \sum_{i \in \mathcal{X}} Z_{n,i}(1-q_i(A)) \bigg\}, \label{Eqn3}
\end{align}
where to obtain \eqref{Eqn3} we use the fact that $1-y \leq e^{-y}$.
In addition, 
using the inequality
$1-\prod_{i \in I} \alpha_i \le \sum_{i \in I} (1-\alpha_i)$ (where $I$ is 
countable and $\alpha_i \in [0,1]$ for all $i \in I$) and the subadditivity of the probability measure,
%
%
%
%
%
\[
1- \bm{q}(A)^{\bm{Z}_n} \leq  \sum_{i \in \mathcal{X}}  (1-q_i(A)^{Z_{n,i}})\leq \sum_{i \in \mathcal{X}} Z_{n,i} (1-q_i(A))
\]
so that
\begin{equation}\label{Eqn4}
\bm{q}(A)^{\bm{Z}_n} \geq 1- \sum_{i \in \mathcal{X}} Z_{n,i} (1-q_i(A)).
\end{equation}
Combining \eqref{Eqn3} and \eqref{Eqn4} we obtain 
\begin{align*}
\mbP_x\bigg(\lim_{n \to \infty} &\bm{q}(A)^{\bm{Z}_n}=0, \,\lim_{n \to \infty} \bm{q}(B)^{\bm{Z}_n}=1\bigg) \\
&\leq \mbP_x \bigg( \liminf_{n \to \infty} \sum_{i \in \mathcal{X}} Z_{n,i}(1-q_i(A)) \geq 1, \, \lim_{n \to \infty} \sum_{i \in \mathcal{X}} Z_{n,i}(1-q_i(A))=0 \bigg) \\
&=0,
\end{align*}
which contradicts \eqref{Eqn1}. Thus, the assertion in \eqref{Eqn2} cannot hold.


The equivalence {\emph{(i)} $\Leftrightarrow$ \emph{(ii)}} follows from the equality
 $\mathcal{S}(A)\cap \mathcal{E}(B)=
\mathcal{S}(A\setminus B)\cap \mathcal{E}(B)$ and the fact that {\emph{(v)} $\Leftrightarrow$ \emph{(i)}} (apply {\emph{(v)}} with $A\setminus B$ instead of $A$).

Finally, we prove {\emph{(iv)} $\Leftrightarrow$ \emph{(vii)}}. Assume $A=\mathcal{X}$.
Since $\hat S^{(B)}$ is non-empty, by Proposition~\ref{pro:maximal} it is not a singleton if and only
if $\bm{q}(\mathcal X,B)<\bm{q}^{(0)}(B)$. Note that
$q_x^{(0)}(B)-q_x(\mathcal X,B)=\pr_x(\mathcal{N}(B) \cap \mathcal{S}(\mathcal X))$ whence
$\bm{q}(\mathcal X,B)<\bm{q}^{(0)}(B)$ if and only if there exists $x \in \mathcal{X}$ such that 
$\pr_x(\mathcal{N}(B) \cap \mathcal{S}(\mathcal X))>0$,
that is, if and only if \emph{(iv) holds}.
\end{proof}

\begin{proof}[Proof of Corollary~\ref{cor:1}]
If $\sup_{x \in \mathcal{X}} {q}_x = 1$ there is nothing to prove. Otherwise, suppose $\sup_{x \in \mathcal{X}} {q}_x < 1$; then by Theorem \ref{th:1} \textit{(vi)} (set $A=\mathcal{X}$ and $B=A$), we have
\[
 \inf_{x \in \mathcal{X}} (1-q_x(A))
 \le
 \inf_{x \in \mathcal{X}} \frac{1-q_x(A)}{1-q_x}=0,
\]
which yields the claim.
\end{proof}

\begin{proof}[Proof of Theorem~\ref{th:equivalence}]
The equivalence $(ii) \Leftrightarrow (iii)$ follows from
Corollary~\ref{cor:equivalence}.

Suppose that (C4) holds.
Let us prove that $(i) \Rightarrow (ii)$. Since for all $i \in I$ we have $A_i \Rightarrow A_j$ for some $j \in J$, then
$A_i \Rightarrow  \bigcup_{j \in J} A_j$ for all $i \in I$ which, by (C4), implies
$\bigcup_{i \in I} A_i \Rightarrow 
\bigcup_{j \in J} A_j$. By exchanging the role of $I$ and $J$ we prove the claim.
This implies that the map $[I]_\sim \mapsto \bm{q}\big ( \bigcup_{i \in I} A_i \big )$ is well defined and, if (C1) holds, it is a surjective map onto $\text{Ext}(\mathcal{A})$.

Now assume (C2) and (C5). We prove that $(ii) \Rightarrow (i)$.
Suppose that either $I$ or $J$ are empty; then \textit{(i)} holds if and only if they are both empty. The same holds for \textit{(iii)} and \textit{(ii)} because $\bm{q}\big ( \bigcup_{i \in I} A_i \big )=\bm{1}$ if and only if $I=\emptyset$.
We can assume henceforth $I, J \neq \emptyset$.
Suppose, by contradiction, that there exists $i \in I$ such that
$A_i \nRightarrow A_j$ for all $j \in J$
(if there exists $j \in J$ such that
$A_j \nRightarrow A_i$ for all $i \in I$ we proceed analogously): in this case $J \subseteq J_{i}$ and, by (C5),
$A_i \nRightarrow \bigcup_{j \in J} A_j$. This implies $\bigcup_{i \in I} A_i \nRightarrow 
\bigcup_{j \in J} A_j$ and yields the claim.
Moreover, it implies that $\bm{q}\big ( \bigcup_{i \in I} A_i \big ) \mapsto [I]_\sim$ is a well defined surjective map from a subset of $\text{Ext}(\mathcal{A})$ onto
$2^{K_{\mathcal{A}}}/_\sim$.
\end{proof}

\begin{proof}[Proof of Lemma~\ref{lem:canonic}]
 Fix a family of probability distributions
 $\{r_i\}_{i \in Z}$, where $r_i=\{r_{ij}\}_{j \in Z}$ such that $r_{ii}=1/2$ for all $i \in Z$ and, when $i \neq j$, $r_{ij}>0$ if and only if $(i,j) \in E_Z$. Consider a probability generating function $\phi(s)$ such that $\phi^\prime(1)>2$. 
 
We define a MGWBP on $Z$ by the following reproduction rules: a particle living at $i$ produces a random number of offspring according to the distribution with 
probability generating function $\phi$; each newborn particle is placed at random independently according to the distribution $r_{i}$. The offspring generating function of this MGWBP is $G_i(\vc s):=\phi(\sum_j r_{ij} s_j)$.
 Define the family $\mathcal{A}$ as the collection
  of singletons $A_i:=\{i\}$ for $i\ \in Z$. 
  
  Clearly local survival in $i$ implies survival in $j$ if and only if there is a path from $i$ to $j$ in $(Z,E_Z)$.
   Let us prove regularity. Condition $(C1)$ is trivial and, since there are no closed
paths in $(Z,E_Z)$, then Condition $(C3)$ follows.

The probability of local extinction starting from $i$ is the smallest nonnegative fixed point of the generating function $\psi(s):=G_i(\vc s)|_{s_i=s, s_j=1, j\neq i}$; indeed, every child placed outside $i$ cannot contribute to the local survival (because there are no closed paths of length strictly larger than 1). This means that each particle in the progeny has the same (positive) probability $1-\beta$ of generating a population which survives locally and this implies $(C2)$. 
  

Let us pick $I \subseteq Z$. If the process survives in
$\bigcup_{i \in I} A_i$ then there are infinitely many descendants, and by a Borel-Cantelli argument, almost surely, at least one of them (actually an infinite number of them) will generate a progeny which survives locally.
Thus, for every fixed $I$, survival in $\bigcup_{i \in I} A_i$ implies survival in $A_i$ for some $i \in I$. 
This proves that Condition (C4) holds.

To prove $(C5)$ it is enough to observe  that
$A_i \nRightarrow A_j$ if and only if there is no path from $i$ to $j$  in $(Z,E_Z)$;
thus, if the process starts from $i$, then the probability of visiting $\bigcup_{j \in J_i} A_j=J_i$ is 0, while
the probability of survival in $A_i$ is strictly positive.
\end{proof}

\begin{proof}[Proof of Lemma~\ref{lem:IM}]
Recall that, by definition, $[I_d]_\sim=[I_M]_\sim$, that is, $I_d \sim I_M$.

\emph{(i)}.
If $I_c=\emptyset$ then 
$I=I_d$ and $[I]_\sim=[I_d]_\sim=[I_M]_\sim$.
Conversely, if $I \sim I_M$ then for all $i \in I$ there exists $j \in I_M$ such that $A_i \Rightarrow A_j$, thus $i \in I_d$. This implies that $I_C =\emptyset$.

\emph{(ii)}.
The claim follows from the chain of equalities
$[I_d]_\sim=[I_M]_\sim=[J_M]_\sim=[J_d]_\sim$. 

\emph{(iii)}.
If $I_M=J_M$ then $I_d \sim J_d$ by \textit{(ii)}. 
Conversely, since $[I_M]_\sim=[I_d]_\sim=[J_d]_\sim=[J_M]_\sim$,  $I_M$ and $J_M$ are primitive subsets, and (C3) holds, we have $[I_M]_\sim=[J_M]_\sim$, which implies $I_M=J_M$ because these sets are primitive.

%
%
%
\emph{(iv)}.
Let us prove
$\Longrightarrow$.
Let $I \sim J$ and $i \in I_M$. If $j \in J$ such that $A_i \Rightarrow A_j$, there exists $i_1 \in I$ such that $A_j \Rightarrow A_{i_1}$, thus $A_i \Rightarrow A_{i_1}$ whence
$i=i_1=j$ (from the definition of $I_M$ and from (C3)). 
Since by the equivalence there exists such a $j \in J$, we have that $i$ is an element of $J$ which does not imply any other element of $J$, that is, 
$i \in J_M$. Thus $I_M \subseteq J_M$; by exchanging the role of $I$ and $J$, we have $I_M=J_M$.
 For all $i \in I_c$, there exists $j \in J$ such that $A_i \Rightarrow A_j$ and, by the definition of $I_c$, there is no $l \in I_M$ such that $A_j  \Rightarrow A_l$. Since $I_M=J_M$ then $j \in J_c$. By exchanging the role of $I$ and $J$ we have 
$I_c \sim J_c$. \hfill

Let us now prove
$\Longleftarrow$.
Let $i \in I$. If $i \in I_M=J_M$ then $i \in J$. If $i \in I_d$ then, since $I_d \sim J_d$, $A_i \Rightarrow A_j$ for some $j \in J_d \subseteq J$, whence $A_i \Rightarrow A_j$ for some $j \in J$. By exchanging the role of $I$ and $J$, we have that 
for all $j \in J$ there exists $i \in I$ such that $A_j \Rightarrow A_i$. This proves that $I \sim J$.


\emph{(v)}.
Note that, from \textit{(iv)}, if $I \sim J$ then $I_c=\emptyset$ if and only if $J_d=\emptyset$. Whence, if $J$ is primitive and $I \sim J$ we have $\emptyset=J_c=I_c$. The converse follows from \textit{(i)} by taking $I:=J_M$.

\emph{(vi)}.
We prove, by induction, that there is a sequence of pairwise distinct elements
$\{i_n\}_{n \in \N}$ such that, for all $n \in \N$,  $i_n \in I_c$ and $A_{i_n} \Rightarrow A_{i_{n+1}}$. Since $I_c \neq \emptyset$ there exists $i_0 \in I_c$. Suppose that we have $n+1$ distinct elements $i_0, i_1, \ldots, i_{n} \in I_c$ satisfying the above relation. Since $i_n \not \in I_M$ there exists $i_{n+1} \neq i_{n}$ in $I$ such that $A_{i_{n}} \Rightarrow A_{i_{n+1}}$. By (C3), $i_{n+1} \neq i_j$ for all $j <n$.  If $j \in I$ such that $A_{i_{n+1}} \Rightarrow A_j$ then $A_{i_{n}} \Rightarrow A_j$ whence $j \neq I_M$ since $i_n \in I_c$; this implies that $i_{n+1} \in 
I_c$.

\emph{(vii)}.
It follows easily from \textit{(iii)} and \textit{(iv)}, from the decomposition $I=I_d \cup I_c$ and 
from the basic properties discussed above, $(J_d)_M=(J_M)_M=J_M$, $(J_d)_d=J_d$, $(J_M)_c=(J_d)_c=(J_c)_M=(J_c)_d=\emptyset$ which hold for all $J \subseteq K_\mathcal{A}$.
\hfill
\break
\noindent
Suppose that $J \sim I$ and consider the decomposition $J=J_d \cup J_c$. Observe that, from \textit{(iv)} and the basic properties discussed above, $(J_d)_M=J_M=I_M$, $(J_d)_c=(J_c)_M=(J_c)_d=\emptyset$. By taking $H:=J_d$ and $W:=J_c$ we prove that $J$ belong to the set in the right-hand-side.
\hfill
\break
\noindent
Conversely, let $J=H \cup W$ belong to the set in the right-hand-side, and let us prove that $J\sim I$. If $i \in I_d$ then $A_i \Rightarrow A_j$ for some $j \in I_M=H_M \subseteq J$ then $A_i \Rightarrow A_j$ where $j \in J$. If $i \in I_c$ then, by hypotheses there exists $j \in W_c \subseteq J$ such that $A_i \Rightarrow A_j$. If $j \in J_d$
\end{proof}

\begin{proof}[Proof of Proposition~\ref{pro:cardinality}]
 Assume (C3).   
  We make use of Lemma~\ref{lem:IM} to show that the map
$g_\mathcal{A}([I]_\sim)=(I_M, (I_c)^+)$ is a bijection from
$2^{K_\mathcal{A}}/_\sim$ onto 
$\textswab{I}_\mathcal{A}$.
   %
   %
The map is well defined and injective by Lemma~\ref{lem:IM}\emph{(iv)}; indeed,
note that $I_c \sim \tilde I_c$ if and only if $(I_c)^+=(\tilde I_c)^+$. By the definition of $I_c$ it is clear
that for all  $i\in I_M$ and $j \in I_c$ we have $A_i \not \Leftrightarrow A_j$, whence the image of the map is a subset of $\textswab{I}_\mathcal{A}$ (take $J=(I_c)^+$ and $\bar J=I_c$ in Equation~\eqref{eq:Ia}).

We are left to prove that the map is surjective.
Note that $\textswab{I}_\mathcal{A}$ can be equivalently
defined as
\begin{equation}\label{eq:Ia2}
\textswab{I}_\mathcal{A}:=\{(I, J) \in \left(\textswab{P}_\mathcal{A} \times \textswab{C}_\mathcal{A}\right)
\colon  
 \exists \bar J \sim J, 
  \forall i \in I,\, \forall j \in \bar J,\, A_i \not \Leftrightarrow A_j 
\}.
\end{equation}
Let $(I, J) \in \textswab{I}_\mathcal{A}$ and let $\bar J \sim J$ such that $\forall i \in I, \forall j \in \bar J, A_i \not \Leftrightarrow A_j$. If we define $\bar I:=I \cup \bar J$ we have that $\bar I_M=\bar I_d= I$ and $\bar I_c=\bar I \setminus \bar I_d=\bar J$; clearly $(\bar J)^+=J$ since $\bar J \sim J$. Then $g_\mathcal{A}([\bar I]_\sim)=(I, (\bar J)^+)=(I,J)$, whence the map is surjective and \textit{(i)} is proved.

When $\textswab{C}_\mathcal{A}=\{\emptyset\}$ then $\textswab{I}_\mathcal{A}=\textswab{P}_\mathcal{A} \times \{\emptyset\}$. The claim \textit{(ii)} follows by the equality
$f_\mathcal{A}=g^{-1}_\mathcal{A}\circ h$, where $h$ is the natural bijection from $\textswab{P}_\mathcal{A}$ onto  $\textswab{P}_\mathcal{A} \times \{\emptyset\}$.

\end{proof}

%

The proofs of Theorem~\ref{th:finiteinfinite2} and Corollary~\ref{th:2} require the following lemma.

 \begin{lemma}\label{pro:finiteinfinite1}
Let $\mathcal{A}=\{A_1, A_2, \ldots, A_{\kappa_{\mathcal{A}}}\}$ with $\kappa_\mathcal{A} \leq +\infty$.
 \begin{enumerate}[(i)]
   \item $\kappa_{\mathcal{A}}<\infty$ if and only if $\textswab{P}_\mathcal{A}$ is finite. 
   \item If $\kappa_{\mathcal{A}}=\infty$, then the following statements are equivalent:
\begin{enumerate}[(1)]
\item $\textswab{P}_\mathcal{A}$ is uncountable;
\item there exists an infinite, primitive $I$;
\item there exists a family $\mathcal{A}'\subseteq \mathcal{A}$ such that 
$A'_i \nLeftrightarrow A'_j$ for all $A'_i, A'_j \in \mathcal{A}'$;
\end{enumerate}
    \end{enumerate}
\end{lemma}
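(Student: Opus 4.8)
The plan is to treat the two parts separately, reducing everything to the closure properties of primitive subsets recorded after Definition~\ref{def:maximal} (every singleton is primitive, and every subset of a primitive subset is primitive) together with the elementary fact that the collection of all finite subsets of a countable set is countable.

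For part \emph{(i)} I would argue by contraposition in both directions. If $\kappa_{\mathcal{A}}<\infty$ then $K_{\mathcal{A}}$ is finite, so $2^{K_{\mathcal{A}}}$ is finite and hence so is $\textswab{P}_\mathcal{A}\subseteq 2^{K_{\mathcal{A}}}$. Conversely, if $\kappa_{\mathcal{A}}=\infty$ then $K_{\mathcal{A}}$ contains infinitely many indices, and since every singleton $\{i\}$ is primitive, $\textswab{P}_\mathcal{A}$ already contains infinitely many elements. This settles \emph{(i)}.

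For part \emph{(ii)}, assume $\kappa_{\mathcal{A}}=\infty$. I would first observe that \emph{(2)} and \emph{(3)} are merely two encodings of the same object: by definition a subset $I\subseteq K_{\mathcal{A}}$ is primitive precisely when $A_i\nLeftrightarrow A_j$ for all distinct $i,j\in I$, so an infinite primitive $I$ is exactly the index set of an infinite subfamily $\mathcal{A}'=\{A_i\}_{i\in I}\subseteq\mathcal{A}$ whose members are pairwise $\nLeftrightarrow$-incomparable, and vice versa. Next, for \emph{(2)} $\Rightarrow$ \emph{(1)}: given an infinite primitive $I$, every subset of $I$ is again primitive, so $2^{I}\subseteq\textswab{P}_\mathcal{A}$; since $I$ is an infinite subset of the countable set $K_{\mathcal{A}}$, the power set $2^{I}$ is uncountable, and therefore so is $\textswab{P}_\mathcal{A}$.

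The only direction requiring a (very small) idea is \emph{(1)} $\Rightarrow$ \emph{(2)}, which I would prove by contraposition. Suppose there is no infinite primitive subset, i.e. every primitive subset is finite. Then $\textswab{P}_\mathcal{A}$ is a subcollection of the finite subsets of the countable set $K_{\mathcal{A}}$; since the family of all finite subsets of a countable set is itself countable, $\textswab{P}_\mathcal{A}$ is countable. Contrapositively, uncountability of $\textswab{P}_\mathcal{A}$ forces the existence of an infinite primitive subset. Together with \emph{(2)}$\Leftrightarrow$\emph{(3)} and \emph{(2)}$\Rightarrow$\emph{(1)}, this yields the equivalence of \emph{(1)}, \emph{(2)}, \emph{(3)}. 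The main (and essentially only) obstacle is recognising that the trichotomy ``finitely many / countably many / uncountably many primitive subsets'' is governed purely by whether an infinite primitive set exists; the counting argument above sidesteps any need for a compactness or K\"onig-type argument, since it suffices to bound $\textswab{P}_\mathcal{A}$ directly by the countable collection of finite subsets of $K_{\mathcal{A}}$.
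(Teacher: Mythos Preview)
Your proof is correct and follows essentially the same route as the paper's: both directions of \emph{(i)} use the finiteness of $2^{K_\mathcal{A}}$ and the primitivity of singletons, and for \emph{(ii)} the paper likewise identifies \emph{(2)} with \emph{(3)} by passing between the index set and the subfamily, obtains \emph{(2)}$\Rightarrow$\emph{(1)} from closure of primitivity under subsets, and deduces \emph{(1)}$\Rightarrow$\emph{(2)} from the countability of the finite subsets of $K_\mathcal{A}$. The only cosmetic difference is that you phrase the last step as a contraposition whereas the paper states it directly via the inclusion $\{I\ \text{primitive and finite}\}\subseteq\bigcup_{i\ge 0}K_\mathcal{A}^i$.
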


\begin{proof}[Proof of Lemma~\ref{pro:finiteinfinite1}]

\emph{(i)}.  Clearly if $\kappa_\mathcal{A}<+\infty$ is finite then $\textswab{P}_\mathcal{A}$ is finite as well.
Conversely, 
 since  every singleton $\{i\}$, where $i \in K_{\mathcal{A}}$, is a primitive set, the reverse implication holds.

\emph{(ii)}. If $I \subseteq K_{\mathcal{A}}$ is primitive and infinite, then it must be countably infinite; in this case, every subset of $I$ is primitive, and the collection of all subsets of $I$ is uncountable, thus we have $\emph{(2)}\Rightarrow\emph{(1)}$. To prove $\emph{(1)}\Rightarrow\emph{(2)}$, it is enough to note that  $\{I\subseteq K_{ \mathcal{A}} \colon I \text{ primitive and finite} \} \subseteq \bigcup_{i=0}^\infty K_{\mathcal{A}}^i$ which is a countably infinite set (provided $\kappa_{\mathcal{A}} \neq 0$). The implication $\emph{(2)}\Rightarrow \emph{(3)}$ is straightforward if we take $\mathcal{A}':=\{A_i \colon i \in I\}$. To prove $\emph{(3)}\Rightarrow \emph{(2)}$, just take $I:=\{i \in K_\mathcal{A} \colon A_i \in \mathcal{A}'\}$.
\end{proof}

\begin{proof}[Proof of Theorem~\ref{th:finiteinfinite2}]

\emph{(i).}  Clearly if $\mathcal{A}$ is finite then $\Sigma(\mathcal{A})$ is finite and $\text{Ext}(\mathcal{A})$ is finite as well (there is no need for (C3) to hold here). 
Conversely, if $\text{Ext}(
 \mathcal{A})$ is finite, then
by Theorem~\ref{th:equivalence}
(which holds without any assumptions on singletons), we have that
$2^{K_\mathcal{A}}/_\sim$ is finite. By Lemma~\ref{lem:primitive}, $\textswab{P}_\mathcal{A}$ is finite as well. By Lemma~\ref{pro:finiteinfinite1}\emph{(i)} $\kappa_\mathcal{A}$ is finite.
  
\emph{(ii).}  We observed that, if (C2), (C3) and (C5) hold, then by Lemma~\ref{lem:primitive} and  Theorem~\ref{th:equivalence}, there is an injective map from $\textswab{P}_\mathcal{A}$ into the set $\text{Ext}(\mathcal{A})$, and this yields Equation~\eqref{bound}.
By regularity, according to Theorem~\ref{th:equivalence}, 
$|\text{Ext}(\mathcal{A})|=
|2^{K_\mathcal{A}}/_\sim|$.
If, in addition there are no ascending chains, by Proposition~\ref{pro:cardinality}, we have equality in Equation~\eqref{bound}.

\emph{(iii)}. Suppose Ext$(\mathcal{A})$ is countably infinite which, as shown above, implies $\kappa_{\mathcal{A}}=\infty$.  By Lemma~\ref{pro:finiteinfinite1}\emph{(ii)} and Equation~\eqref{bound},  an infinite primitive $I \subseteq K_{\mathcal{A}}$ does not exist.
Consider the graph $G_{\mathcal{A}}$ on $K_{\mathcal{A}}$; let $I_0=\emptyset$ and for $j\geq 1$, define $I_j$ recursively so that $I_j$ is the set of vertices with out-degree zero in the induced graph $G_{\mathcal{A}}[ K_{\mathcal{A}} \backslash (\bigcup_{i=0}^{j-1} I_i)]$.
By construction, $E_{\mathcal{A}}[I_j] = \emptyset$ for all $j$, thus there cannot exist $j$ such that $|I_j|=\infty$, since $I_j$ is primitive.
In that case, either there exists $j_0\geq 1$ such that $|I_j|=0$ for all $j \geq j_0$, or $0<|I_j|<\infty$ for all $j \geq 1$. In the former case, because $\kappa_{\mathcal{A}}=\infty$ and the graph contains no cycles in $G_{\mathcal{A}}$, there must exist an infinite path $A_{i_1} \Rightarrow A_{i_2} \Rightarrow A_{i_3} \Rightarrow \dots$. In the latter case, since for all $l \in I_{n+1}$ there exists $r \in I_n$ such that $A_l \Rightarrow A_r$, by transitivity we have that
for all $l \in \bigcup_{j >i} I_j$ there exists $r \in I_i$ such that $A_l \Rightarrow A_r$.  Since $0<|I_n|<+\infty$ for all $n$ and the sets $\{I_n\}_n$ are pairwise disjoint, we have that $\big | \bigcup_{j >n} I_j \big |=+\infty$ for all $n$. Besides,
we have that for all $n$,
there exists $i \in I_n$ such that 
$d_i:=|\{r \in \bigcup_{j >n} I_j \colon A_r \Rightarrow A_i \}|$ is infinite. Clearly,
given any $i_n \in I_n$ such that $d_{i_n}=+\infty$, there exists $i_{n+1} \in I_{n+1}$ such that $d_{i_{n+1}}=+\infty$ and $A_{i_{n+1}} \Rightarrow A_{i_n}$.
It is possible to construct iteratively a sequence $\{i_n\}_n$ such that $i_n \in I_n$, $d_{i_n}=+\infty$, and $A_{i_{n+1}} \Rightarrow A_{i_n}$.
In both cases the family $\mathcal{A}'=\{A'_1,A'_2,\ldots \}$, where  $A'_n := A_{i_n}$, satisfies (C2), (C3) and (C5). Moreover it is regular if $\mathcal{A}$ is regular.

\emph{(iv) and (v)}. According to Proposition~\ref{pro:cardinality} and  Theorem~\ref{th:equivalence}, regularity implies that $\text{Ext}(\mathcal{A})$, $2^{\mathcal{A}}/_\sim$, and $\textswab{I}_\mathcal{A}$ have the same number of (distinct) elements. By Equation~\eqref{eq:Ia} and the remarks thereafter,
\[
 \max(|\textswab{P}_\mathcal{A}|,|\textswab{C}_\mathcal{A}|) \le 
 |\textswab{I}_\mathcal{A}| \le
 |\textswab{P}_\mathcal{A} \times \textswab{C}_\mathcal{A}|.
\]
By recalling that $|\textswab{P}_\mathcal{A}| \ge 1$ and $|\textswab{C}_\mathcal{A}| \ge 1$, it is easy to show that $\max(|\textswab{P}_\mathcal{A}|,|\textswab{C}_\mathcal{A}|)$ and $|\textswab{P}_\mathcal{A} \times \textswab{C}_\mathcal{A}|$ are simultaneously finite, simultaneously countably infinite, or simultaneously uncountable. Whence, 
the above double inequality yields the following
table, 
\begin{center}
\begin{tabular}{l|lll}
$\ $ & $|\textswab{P}_\mathcal{A}|<\infty$ &
$\textswab{P}_\mathcal{A}$ countably $\infty$ &
$\textswab{P}_\mathcal{A}$ uncountable \\[0.2em]
\hline
$|\textswab{C}_\mathcal{A}|<\infty$ &
$ |\textswab{I}_\mathcal{A}|<\infty$  &
$ \textswab{I}_\mathcal{A}$ countably $\infty$ &
$ \textswab{I}_\mathcal{A}$ uncountable \\[0.2em]
$\textswab{C}_\mathcal{A}$ countably $\infty$ &
$ \textswab{I}_\mathcal{A}$ countably $\infty$ &
$ \textswab{I}_\mathcal{A}$ countably $\infty$ &
$ \textswab{I}_\mathcal{A}$ uncountable \\[0.2em]
$\textswab{C}_\mathcal{A}$ uncountable &
$ \textswab{I}_\mathcal{A}$ uncountable &
$ \textswab{I}_\mathcal{A}$ uncountable &
$ \textswab{I}_\mathcal{A}$ uncountable, \\
\end{tabular}
\end{center}
and this proves the claims.

\end{proof}

\begin{proof}[Proof of Corollary~\ref{th:2}]
Let $\mathcal{A}=\{A_1,A_2,A_3,\ldots\}$. By assumption, for all $i\geq 1$, $A_i\nRightarrow \mathcal{X} \backslash A_i$, so $A_i\nRightarrow A_j$ for $j\neq i$. This implies that $\mathcal{A}$ is regular and $I=\mathbb{N}$ is a primitive set. Therefore, by Lemma~ \ref{pro:finiteinfinite1}\emph{(ii)} and Theorem \ref{th:finiteinfinite2}\emph{(i)}, $\text{\text{Ext}}(\mathcal{A}) $ is uncountable.
\end{proof}

\begin{proof}[Proof of Proposition~\ref{pro:ultimate}]
\emph{Assume $r\leq1$ (cases {(i)} and {(ii)}).}
We consider the family $\mathcal{A}=\{\mathcal{L}_0,\mathcal{L}_1,\mathcal{L}_2,\ldots\}$ and start by showing that for any $i\geq 1$, $\mathcal{L}_{i-1} \Leftarrow \mathcal{L}_{i}$, that is, survival in $\mathcal{L}_{i}$ implies survival in $\mathcal{L}_{i-1}$, regardless the initial type. This implies $\bm{q}(\mathcal{L}_{i-1}) \leq \bm{q}(\mathcal{L}_{i})$.
%
%
%
%

Observe that, with probability one, an initial $(i,j)$-type individual has an infinite line of descent made of all $(i,k)$-types for $k\geq j$. Let $\xi_k$ denote the geometric number of $(i-1,k)$-type offspring born to the $(i,k)$-type individual in this line of descent. We have $$\sum_{k\geq j} P[\xi_k\geq 1]=\sum_{k\geq j} \frac{1}{1+r^{k-1}}=\infty;$$
because this sum is infinite for all $j\geq 0$, by the Borel-Cantelli Lemma, if the process ever reaches $\mathcal{L}_i$, then with probability 1,  there are infinitely many individuals in $\mathcal{L}_i$ who have at least one child in $\mathcal{L}_{i-1}$; thus, survival in $\mathcal{L}_{i}$ implies survival in $\mathcal{L}_{i-1}$.

We note that global survival implies survival in $\bigcup_{0\leq i\leq \ell}\mathcal{L}_i$ for some $\ell\geq 1$; in particular, global survival implies survival in $\mathcal{L}_1$, and therefore in $\mathcal{L}_0$. This leads to $\vc q=\vc q(\mathcal{L}_0)=\vc q(\mathcal{L}_1)$.

Next, we show that the study of Ext can be reduced to the study of Ext$(\mathcal{A})$: in other words, for any subset $A\subseteq \mathcal{X}$, if  $\iota(A)=\infty$ then $\vc q(A)=\tilde{\vc q}$, while if $\iota(A)<\infty$ then $\bm{q}(A)= \bm{q}(\mathcal{L}_{\iota(A)})$.

We first assume that $\iota(A)=\infty$. If $|A|<\infty$, then clearly $\bm{q}(A)= \bm{\tilde q}$ since the process is irreducible, so we take $|A|= \infty$. In this case survival in $A$ implies survival in $\mathcal{P}_0$. To see why, suppose there is a positive chance of survival in $A$. If, by contradiction, the process became extinct in $\mathcal{P}_0$ there would exist a finite maximum level $K$ ever reached by the process. Since $\iota(A) = \infty$, we would have $|A \cap (\bigcup_{i=0}^K \mathcal{L}_i)|< \infty$, thus survival in $A$ and extinction in $\mathcal{P}_0$ would imply that the process survives locally. However, by irreducibility, local survival implies survival in $\mathcal{P}_0$ which yields a contradiction. Hence $\bm{q}(A) \geq \bm{q}(\mathcal{P}_0)$.
To show $\bm{\tilde q} \equiv \bm{q}((0,0)) = \bm{q}(\mathcal{P}_0)$ first observe that, by Theorem~\ref{LargeLoc}(iii), $\bm{\tilde q}\ge \bm{q}(\mathcal{P}_0)$.
On the other hand, extinction in $(0,0)$ implies that a finite number of particles will ever reach $(0,0)$, and since each of them reaches a finite level in $\mathcal{P}_0$ almost surely, there is almost sure extinction in $\mathcal{P}_0$. When $\iota(A) = \infty$ we therefore have $ \bm{\tilde q} \geq \bm{q}(A) \geq \bm{q}(\mathcal{P}_0) =  \bm{\tilde q}$.

We now assume $1\leq \iota(A)<\infty$. First, observe that survival in $\mathcal{L}_i$ implies survival
in $A$ whenever $|\mathcal{L}_{i} \cap A|=\infty$; for instance when $i=\iota(A)$. Next, we show that survival in $A$ implies survival in $\mathcal{L}_{\iota(A)}$; by definition of $\mathcal{L}_{\iota(A)}$, $A$ only contains a finite number of types in the levels below $\mathcal{L}_{\iota(A)}$, namely the types in $A_1:=A\cap \cup_{i < \iota(A)}\mathcal{L}_{i}$. Therefore, survival in $A$ implies survival in at least one
of $A_1$ and $A\backslash A_1$. 
By the argument above, survival in $A_1$ implies local survival, which implies survival in $\mathcal{L}_{\iota(A)}$. 
On the other hand, survival in $A\backslash A_1$ also implies survival in $\mathcal{L}_{\iota(A)}$ because survival in $\mathcal{L}_{\ell}$ implies survival in $\mathcal{L}_{\ell-1}$ for all $\ell\geq 1$ when $r\leq 1$. So $\bm{q}(A)= \bm{q}(\mathcal{L}_{\iota(A)})$.

Finally, if $\iota(A)=0$, then extinction in $A$ implies extinction in $\mathcal{L}_1$, and therefore extinction in $\mathcal{L}_0$; indeed, by the above argument, survival in $\mathcal{L}_1$ implies that infinitely many individuals in $\mathcal{L}_1$ will have at least one child in $A\cap \mathcal{L}_0$. On the other hand, here $A_1=\emptyset$, and survival in $A$ implies survival in $\mathcal{L}_0$ for the same reason as above. So $\bm{q}(A)= \bm{q}(\mathcal{L}_{0})$.

Thus, for $r \leq 1$, we have at most a countable number of distinct extinction probability vectors.

\emph{Assume $r=1$ (case {(ii)}).}
We show that when $r=1$, the family $\mathcal{A}\setminus\{\mathcal{L}_0\}$ is regular, and due to the linear structure of $G_\mathcal{A}$, the edgeless subgraphs are precisely the countably infinite singletons (individual levels).
It is enough to prove that for any $i \geq 1$, $\mathcal{L}_i\nRightarrow \mathcal{L}_{i+1}$, that is, there exists $x \in \mathcal{X}$ such that $q_x(\mathcal{L}_i) < q_x(\mathcal{L}_{i+1})$. This implies $\bm{q}(\mathcal{L}_i) < \bm{q}(\mathcal{L}_{i+1})$.
It suffices to show that, starting from $x$, there is a positive chance of survival in $\mathcal{L}_i$ without ever reaching $\mathcal{L}_{i+1}$. We consider a $(i,k+1)$-type individual ($k\geq 0$) and note that the expected number of its descendants that eventually reach t $(0,k+j)$ when all particles are frozen as soon as they reach $\mathcal{L}_0$, is $\binom{i+j-1}{j-1}$. 
Each frozen particle at $(0,k+j)$ independently has probability $p^{k+j}$ of having a descendant that reaches $(0,0)$; we refreeze the particles reaching $(0,0)$.  Thus, the expected number of frozen $(0,0)$-type descendants of the initial $(i,k+1)$-type individual is given by 
\[
p^{k}\sum_{j =1}^\infty \binom{i+j-1}{ j-1} p^{j} < \infty
\]
when $p <1$. 
Since the sum is finite, we can select $k$ such that the initial type $x=(i,k+1)$ has an expected number of frozen $(0,0)$-type descendants strictly less than 1. By Markov's inequality there is a positive chance that the original particle has no $(0,0)$-type descendants, and hence has no  descendants in $\mathcal{L}_{i+1}$.
The family $\mathcal{A}\setminus\{\mathcal{L}_0\}$ satisfies the conditions of Theorem~\ref{th:finiteinfinite2}(v).



\emph{Assume $r<1$ (case {(i)}).}
For any $i\geq 1$, we show that if $r^i>p$, then $\bm{q}(\mathcal{L}_i) < \bm{q}(\mathcal{L}_{i+1})$, while if $r^i\leq p$ then $\bm{q}(\mathcal{L}_i) = \tilde{\bm{q}}$; this implies that $\mathcal{L}_i\Leftrightarrow\mathcal{L}_j$ if and only if $r^i\leq p$ and $r^j\leq p$. Hence the family $\mathcal{A}$ does not satisfy (C3), but the subfamily $\mathcal{A}':=\{\mathcal{L}_1, \ldots, \mathcal{L}_{i^*}\}$ does and it is regular.

Assume first that $r^i>p$. We need to show that there exists $x \in \mathcal{X}$ such that $q_x(\mathcal{L}_i) < q_x(\mathcal{L}_{i+1})$. Following similar arguments as in case \emph{(ii)}, it suffices to show that
the expected number of frozen $(0,0)$-type descendants of an initial $(i,1)$-type individual is finite. This expected number is bounded above by
\[
\sum_{j =1}^\infty \binom{i+j-1}{j-1} (r^{-j+1})^i\,p^j,
\]
which is finite when $r^i>p$.

Finally, assume $r^i\leq p$. Because $\bm{q}(\mathcal{L}_i)\leq \tilde{\bm{q}}$,  it remains to show that $\tilde{\bm{q}}\leq \bm{q}(\mathcal{L}_i)$, or equivalently, that survival in $\mathcal{L}_i$ implies local survival. Without loss of generality, we consider an initial $(i,1)$-type individual 
and we show that, with probability 1, it has an infinite number of $(0,0)$-type descendants.
%
Indeed, with probability 1, the initial individual has an infinite line of descendance made of type $\{(i,j)\}_{j \geq 1}$ individuals. The probability that any $(i,j)$-type individual in this line of descendants has at least one
(frozen) $(0,0)$-type descendant is bounded from below by the probability of having at least one descendant along the direct path from $(i,j)$ to $(0,j)$ in $\mathcal{P}_j$ and then along the direct path from $(0,j)$ to $(0,0)$ in $\mathcal{L}_0$.
This probability is $1-G_j^{(i)}(1-p^j)$, where $G_j^{(i)}(s)$ is the composition of $i$ geometric probability generating functions with mean $r^{-j+1}$ and satisfies
$$\dfrac{1}{1-G_j^{(i)}(s)}=\dfrac{(r^{j-1})^i}{1-s}+\left(1+r^{j-1}+(r^{j-1})^2\ldots+(r^{j-1})^{i-1}\right).$$
Because
\[
\sum_{j \geq 1} 1-G_j^{(i)}(1-p^j) = \sum_{j \geq 1} \left(r^{-1} \left(\frac{r^i}{p}\right)^j+\left(\frac{1-(r^i)^{j-1}}{1-r^{j-1}}\right)\right)^{-1}=\infty,
\]since the general term of the series diverges when $r^i\leq p$,
by the Borel-Cantelli lemma, with probability 1, the $(i,1)$-type individual has infinitely many (frozen) $(0,0)$-type descendants. By extension the same is true when we start with any $(i,j)$-type individual. This shows that survival in $\mathcal{L}_i$ implies local survival.
  In this case $\Sigma(\mathcal{A})=\Sigma(\{\mathcal{L}_i\colon 1\le i\le i^*\}$ and Theorem~\ref{th:finiteinfinite2}(i) applies.


\emph{Assume $r> 1$ (case {(iii)}).}
We show that for each $i,j \geq 1$,
\begin{equation}\label{eqcor}
\mbP_{(i,j)}( \mathcal{S}(\mathcal{L}_i) \cap \mathcal{E}(\mathcal{X} \backslash \mathcal{L}_i) )>0.\end{equation}
Corollary~\ref{th:2} then implies that there are uncountably many distinct extinction probability vectors. 


Recall that, with probability one, an initial $(i,j)$-type individual has an infinite line of descent made of types $(i,k)$ for $k>j$, and that $\xi_k$ denotes the geometric number of $(i-1,k)$-type offspring born to the $(i,k)$-type individual in this line of descent. 
%
By direct computation,
\[
 \pr(\xi_k=0\; \forall k \ge j)=
 \prod_{k=j}^{+\infty} \Big (1-\frac{r^{-k+1}}{1+r^{-k+1}} \Big )>0
\]
since $\sum_{k=j}^{+\infty} r^{-k+1}<+\infty$.
Thus, for all $j \ge 1$, there is positive probability that the descendants of $(i,j)$ never reach $\mathcal{L}_{i-1}$, and therefore \eqref{eqcor} holds.

\end{proof}

\begin{proof}[Proof of Proposition~\ref{prop:f}] A similar argument as in the proof of Proposition \ref{pro:ultimate} ($r>1$) can be  used  to show that $\mathcal L'_i \nLeftrightarrow \mathcal L'_j$ for all $i \neq j$.

Then, for any $I \subseteq \mbN$ with $|I|=\infty$ we have $\bm{q}(\bigcup_{i \in I} \mathcal L'_i) = \bm{q}(\mathcal{X})$, and for any $|I|<\infty$ we have $\bm{q}( \bigcup_{i \in I} \mathcal L'_i) = \bm{q}(\bigcup_{i\in {I}} \mathcal L_i )$; since the number of finite subsets of $\mbN_0$ is countably infinite, this proves that  {Ext}$(\mathcal{A}')$ is countably infinite.\end{proof}

\section*{Appendix A: Numerical computation of $\vc q(A)$}

We describe an iterative method to compute the extinction probability vector $\vc q(A)$ for any subset $A\subseteq \mathcal{X}$ in an irreducible MGWBP. Since $\mathcal{X}$ is countably infinite, we first relabel the types in $A$ as $1,2,3,4,\ldots$, and the types in $\mathcal{X}\setminus A$ as $1',2',3',4',\ldots$. For $k,\ell'\geq 1$, we then define $\vc q^{(k,\ell')}(A)$ as the global extinction probability vector of the  finite-type modified branching process where the types in $A$ larger than $k$ are \emph{immortal }and the types in $\mathcal{X}\setminus A$ larger than $\ell'$ are \emph{sterile}. More precisely, the offspring generating function $\bar{\vc G}^{(k,\ell')}(\vc s)$ of the modified process is such that
$$\begin{array}{lll}\bar{G}_i^{(k,\ell')}(\vc s)=G_i(\vc s) &\textrm{for all $i\in A$,} &i< k,\\\bar{G}_{i'}^{(k,\ell')}(\vc s)=G_{i'}(\vc s) &\textrm{for all $i'\in \mathcal{X}\setminus A$,} & i'< \ell',\\\bar{G}_i^{(k,\ell')}(\vc s)=0 &\textrm{for all $i\in A$,} & i\geq k,\\\bar{G}_{i'}^{(k,\ell')}(\vc s)=1 &\textrm{for all $i'\in \mathcal{X}\setminus A$,} & i'\geq \ell',\end{array}$$and $\vc q^{(k,\ell')}(A)$ is the minimal fixed-point  of the (finite) system $\vc s=\bar{\vc G}^{(k,\ell')}(\vc s)$, obtained by functional iteration.

\begin{proposition}If the MGWBP is irreducible then
$$\lim_{k\rightarrow\infty}\lim_{\ell'\rightarrow\infty}\vc q^{(k,\ell')}(A)=\vc q(A).$$
\end{proposition}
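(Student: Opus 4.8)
The plan is to take the two limits one at a time, using throughout that the minimal fixed point of a nondecreasing generating vector is itself nondecreasing in that vector. Turning a sterile type (value $1$) into an ordinary type (value $G_{i'}(\vc s)\le 1$) lowers $\bar{\vc G}^{(k,\ell')}$, while turning an immortal type (value $0$) into an ordinary type raises it; hence $\vc q^{(k,\ell')}(A)$ is nonincreasing in $\ell'$ and nondecreasing in $k$, so both iterated limits exist. Write $A_{\ge k}$ and $A_{<k}$ for the types of $A$ with relabelled index $\ge k$ and $<k$, and set $D_{k,\ell'}:=A_{<k}\cup(\mathcal X\setminus A)_{<\ell'}$, a finite set. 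Interpreting $\vc q^{(k,\ell')}(A)$ as the global extinction probability of the modified process, and using K\"onig's lemma (offspring numbers are finite), one obtains the identity $q^{(k,\ell')}_x(A)=\mbP_x\big(\mathcal N(A_{\ge k})\cap N_{k,\ell'}\big)$, where $N_{k,\ell'}$ is the event that the family tree has no infinite line of descent contained in $D_{k,\ell'}$: the modified population dies out exactly when it never reaches an immortal type (which would guarantee survival) and no surviving line stays inside $D_{k,\ell'}$ (a line reaching a sterile type is killed).

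For the inner limit I would let $\ell'\to\infty$. As $D_{k,\ell'}\uparrow \mathcal X\setminus A_{\ge k}$, the events $N_{k,\ell'}$ decrease, so by continuity of the measure $\vc v^{(k)}_x:=\lim_{\ell'\to\infty}q^{(k,\ell')}_x(A)=\mbP_x\big(\mathcal N(A_{\ge k})\cap\mathcal B_k^{\,c}\big)$, where $\mathcal B_k$ is the event that some infinite line of descent is confined to a finite subset of $\mathcal X\setminus A_{\ge k}$. I claim this equals $q_x(A,A_{\ge k})=\mbP_x(\mathcal E(A)\cap\mathcal N(A_{\ge k}))$ (which belongs to $\widehat S^{(A_{\ge k})}$). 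The inclusion $\mathcal E(A)\cap\mathcal N(A_{\ge k})\subseteq \mathcal N(A_{\ge k})\cap\mathcal B_k^{\,c}$ is the easy direction: a confined infinite line visits a finite type set infinitely often, so by irreducibility and Borel--Cantelli it spawns infinitely many individuals in $A$, contradicting extinction in $A$.

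The reverse inclusion is the crux and I expect it to be the main obstacle. It amounts to the lemma: in an irreducible MGWBP, survival in a finite set of types $F$ entails, almost surely, an infinite line of descent confined to a finite set. To prove it I would pass to the finite-type embedded process recording first returns to $F$; survival in $F$ makes this process supercritical, so its mean matrix has Perron root $>1$. Restricting the excursions between returns to a finite set $F'\supseteq F$ yields mean matrices that increase to the full one as $F'\uparrow\mathcal X$, hence, by continuity of the Perron root of finite nonnegative matrices, the $F'$-confined embedded process is already supercritical for some finite $F'$; it then survives with positive probability from each type of $F$, and a surviving $F'$-confined process is precisely an infinite line of descent confined to $F'$. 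A Borel--Cantelli argument applied to the infinitely many visits to $F$ on the survival event promotes this to an almost-sure statement. Applying the lemma with $F=A_{<k}$: on $\mathcal N(A_{\ge k})\cap\mathcal B_k^{\,c}$ the process cannot survive in $A_{<k}$ (that would force a confined line inside $\mathcal X\setminus A_{\ge k}$, contradicting $\mathcal B_k^{\,c}$), and since $\mathcal N(A_{\ge k})$ rules out survival in $A_{\ge k}$, extinction in $A$ follows; thus $\vc v^{(k)}=\vc q(A,A_{\ge k})$.

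The outer limit is then immediate. The events $\mathcal E(A)\cap\mathcal N(A_{\ge k})$ increase with $k$, and their union is $\mathcal E(A)\cap\big(\bigcup_k\mathcal N(A_{\ge k})\big)=\mathcal E(A)$, because extinction in $A$ forces only finitely many distinct types of $A$ to be visited, so that $A_{\ge k}$ is eventually avoided. Continuity of the measure from below gives $\lim_{k\to\infty}\vc v^{(k)}=\lim_{k\to\infty}\vc q(A,A_{\ge k})=\vc q(A)$, completing the argument. Besides the lemma, the points needing care are the rigorous justification of the tree/line identity for $q^{(k,\ell')}_x(A)$ and the Perron-root continuity when the embedded mean matrix has possibly infinite entries, where non-singularity guarantees that supercriticality yields genuine survival.
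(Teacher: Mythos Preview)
Your argument is essentially correct and, since the paper's own proof consists only of the sentence ``follows the same arguments as that of Theorem 4.3 in [Bra18]'', you have in fact supplied the details that the paper omits. Two points deserve a little more care. First, your displayed identity $q^{(k,\ell')}_x(A)=\mbP_x(\mathcal N(A_{\ge k})\cap N_{k,\ell'})$ is not literally correct for fixed $\ell'$: the event ``never reach an immortal type'' refers to the \emph{modified} process, and in the original coupling this is the event that no line confined to $D_{k,\ell'}$ reaches $A_{\ge k}$, which is strictly larger than $\mathcal N(A_{\ge k})$ (a line in the original tree could reach $A_{\ge k}$ after passing through a sterile type). Since this $\ell'$-dependent event decreases to $\mathcal N(A_{\ge k})$ as $\ell'\to\infty$, your inner limit $\vc v^{(k)}_x=\mbP_x(\mathcal N(A_{\ge k})\cap\mathcal B_k^{\,c})$ is nonetheless correct.

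Second, in the ``easy direction'' your Borel--Cantelli argument is shaky because the individuals on a single line are ancestor--descendant and hence not independent. A cleaner route is to note that an infinite line confined to a finite $F$ forces survival in $F$; then use that in an irreducible process $\bm q(F)=\tilde{\bm q}=\bm q(\{a\})$ for any $a\in A$, and that $\mathcal E(F_1)=\mathcal E(F_2)$ a.s.\ whenever both are finite (since $\mathcal E(F_1\cup F_2)=\mathcal E(F_1)\cap\mathcal E(F_2)$ and all three have the same probability), which yields $\mathcal S(F)\subseteq\mathcal S(A)$ a.s.\ directly. Your hard-direction lemma (survival in a finite set a.s.\ produces a line confined to a finite set) is exactly the mechanism behind the truncation result of \cite{haut2013}, and your Perron-root continuity sketch together with the Borel--Cantelli upgrade from positive probability to almost-sure is the standard way to prove it. The outer limit is fine as written.
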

The proof follows the same arguments as that of Theorem 4.3 in \cite{Bra18}. Note that the convergence rate of the sequence $\{\vc q^{(k,\ell')}(A)\}_{k,\ell'\geq 1}$ depends on the way the types are relabelled. In addition, it is often more efficient to let $k=\ell'$ and let them increase to infinity together; however, we must be careful since that does not always guarantee convergence, as highlighted in  \cite{Bra18}. The computational method can be optimised depending on the example under consideration. 

%
%
%

\section*{Acknowledgements}

Daniela Bertacchi and Fabio Zucca acknowledge support from INDAM-GNAMPA and PRIN Grant 20155PAWZB. 
Peter Braunsteins has conducted part of the work while supported by the Australian Research Council (ARC) Laureate Fellowship FL130100039 and the Netherlands Organisation for Scientific Research (NWO) through Gravitation-grant NETWORKS-024.002.003. Sophie Hautphenne would like to thank the Australian Research Council (ARC) for support through her Discovery Early Career Researcher Award DE150101044 and her Discovery Project DP200101281.
The authors also acknowledge the ARC Centre of Excellence for Mathematical and Statistical Frontiers (ACEMS) for supporting the research visit of Daniela Bertacchi and Fabio Zucca at The University of Melbourne, during which this work was initiated.

\end{document}